\DeclareMathAlphabet{\mathscrbf}{OMS}{mdugm}{b}{n}
\definecolor{violet}{rgb}{0.0,0.2,0.7}
\definecolor{rouge2}{rgb}{0.8,0.0,0.2}
\renewcommand\subsection{\@startsection{subsection}{2}%
  \z@{.5\linespacing\@plus.7\linespacing}{-.5em}%
  {\normalfont\sffamily}}  
\renewcommand{\div}{\textup{div}}
\renewcommand{\phi}{\varphi}
\newcommand{\into}{\hookrightarrow}
\newcommand{\map}{\dashrightarrow}
\renewcommand{\le}{\leqslant}
\renewcommand{\ge}{\geqslant}
\newcommand{\mult}{\textup{mult}}
\newcommand{\sE}{\mathscr{E}}
\newcommand{\sG}{\mathscr{G}}
\newcommand{\sH}{\mathscr{H}}
\newcommand{\sL}{\mathscr{L}}
\newcommand{\sN}{\mathscr{N}}
\newcommand{\sO}{\mathscr{O}}
\newcommand{\Der}{\textup{Der}}
\newtheorem{thm}{Theorem}[section]
\newtheorem{question}[thm]{Question}
\newtheorem{lemma}[thm]{Lemma}
\newtheorem{cor}[thm]{Corollary}
\newtheorem{prop}[thm]{Proposition}
\newtheorem*{thm*}{Theorem}
\theoremstyle{definition}
\newtheorem{defn}[thm]{Definition}
\newtheorem{defn-thm}[thm]{Definition-Theorem} 
\newtheorem{defn-lemma}[thm]{Definition-Lemma}
\theoremstyle{remark}
\newtheorem{fact}[thm]{Fact}
\newtheorem*{not-and-def}{Notation and definitions}
\newtheorem{assumption}[thm]{Assumption} 
\newtheorem{rem}[thm]{Remark}
\numberwithin{equation}{section}
\def\factor#1.#2.{\left. \raise 2pt\hbox{$#1$} \right/\hskip -2pt\raise -2pt\hbox{$#2$}}
\begin{document} 

\title[Numerical characterization of some toric fiber bundles]{Numerical characterization of some toric fiber bundles}

\author{St\'ephane \textsc{Druel}}

\address{St\'ephane Druel: Univ Lyon, CNRS, Universit\'e Claude Bernard Lyon 1, UMR 5208, Institut Camille Jordan, F-69622 Villeurbanne, France} 

\email{stephane.druel@math.cnrs.fr}

\author{Federico Lo Bianco}

\address{Federico Lo Bianco: Univ Lyon, CNRS, Universit\'e Claude Bernard Lyon 1, UMR 5208, Institut Camille Jordan, F-69622 Villeurbanne, France} 

\email{lobianco@math.univ-lyon1.fr}

\subjclass[2010]{14M99}

\begin{abstract}
Given a complex projective manifold $X$ and a divisor $D$ with normal crossings, we say that the logarithmic tangent bundle
$T_X(-\textup{log}\, D)$ is R-flat if its pull-back to the normalization of any rational curve contained in $X$ is the trivial vector bundle.
If moreover $-(K_X+D)$ is nef, then the log canonical divisor $K_X+D$ is torsion and the maximally rationally chain connected fibration turns out to be a smooth locally trivial fibration with typical fiber $F$ being a toric variety with boundary divisor $D_{|F}$.
\end{abstract}

\maketitle

{\small\tableofcontents}

\section{Introduction}

Let $X$ be a smooth projective algebraic variety $X$ over the field of complex numbers and let $D$ be a divisor with normal crossings. The structure of pairs $(X,D)$ with trivial logarithmic tangent bundle $T_X(-\textup{log}\, D)$ is well understood by a result of Winkelmann (see \cite{winkelmann_log_trivial}). They are called semiabelic varieties. 
The simplest examples of pairs $(X,D)$ with trivial logarithmic tangent bundle $T_X(-\textup{log}\, D)$ are pairs $(A,0)$ where $A$ is an abelian variety, and pairs $(X,D)$ where $X$ is a smooth toric variety with boundary divisor $D$.
If $X$ is smooth projective semiabelic variety, then the algebraic group $G:=\textup{Aut}^{0}(X,D)$ is a semiabelian group which acts on $X$ with finitely many orbits. Moreover, the $G$-orbits in $X$ are exactly the strata defined by $D$. As a consequence, the albanese map is a smooth locally trivial fibration with typical fiber $F$ being a toric variety with boundary divisor $D_{|F}$. In particular, semiabelic varieties are toric fiber bundles over abelian varieties. 

However, from the point of view of birational classification of algebraic varieties, it is more natural to consider the case where the logarithmic tangent bundle $T_X(-\textup{log}\, D)$ is numerically flat (see Definition \ref{defn: numerically flat} for this notion). If $D=0$, then $X$ is covered by an abelian variety, as a classical consequence of Yau's theorem on the existence of a K\"ahler-Einstein metric. In the present paper, we obtain a numerical characterization of a class of toric fiber bundles containing pairs $(X,D)$ with numerically flat logarithmic tangent bundle $T_X(-\textup{log}\, D)$ (see Corollary \ref{cor:main_intro_2}).  

Our main result is the following. A vector bundle $\sE$ on a projective variety $X$ is called R-flat if $\nu^*\sE$ is the trivial vector bundle for any morphism $\nu\colon\mathbb{P}^1 \to X$ (see paragraph \ref{R_flat_vector_bundles}).

\begin{thm}\label{thm:main_intro}
Let $(X,D)$ be a log smooth reduced pair with $X$ projective. Suppose that $-(K_X+D)$ is nef and that 
$T_X(-\textup{log}\, D)$ is R-flat.
Then there exist a smooth projective variety $T$ with $K_T\equiv 0$ as well as a smooth morphism with connected fibers 
$a\colon X \to T$. The fibration $(X,D)\to T$ is locally trivial for the analytic topology and any fiber $F$ of the map $a$ is a smooth toric variety with boundary divisor $D_{|F}$. Moreover, $T$ contains no rational curve.
\end{thm}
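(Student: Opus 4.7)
The plan is to take $a \colon X \to T$ to be the MRCC fibration of $X$ and to study its fibres and base separately. R-flatness of $T_X(-\log D)$ should force the MRCC rational quotient to be already a smooth morphism $a \colon X \to T$ onto a smooth projective base with connected fibres — presumably via the structural results on R-flat log tangent bundles established earlier in the paper (the foliation tangent to the MRCC is saturated in $T_X(-\log D)$, trivial along every rational curve, hence regular with compact leaves). For a general fibre $F$, the restriction $T_F(-\log D_F) \hookrightarrow T_X(-\log D)|_F$ is again R-flat; since $F$ is rationally connected, hence simply connected, and $-(K_F + D_F)$ is nef, R-flatness should upgrade to numerical flatness of $T_F(-\log D_F)$ and then, by Simpson's theorem, to actual triviality. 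Winkelmann's theorem then identifies $(F, D_F)$ with a semiabelic variety, and rational connectedness of $F$ rules out any abelian quotient, so $F$ must be a smooth projective toric variety with toric boundary $D_F$.

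For the base, maximality of the MRCC directly gives that $T$ contains no rational curve: a nonconstant $\mathbb{P}^1 \to T$ would have rationally chain connected preimage (a rationally chain connected fibration over a rational curve is itself rationally chain connected), contradicting that MRCC classes are fibres of $a$. Hence $T$ is not uniruled, so $K_T$ is pseudoeffective by Boucksom-Demailly-P\u{a}un-Peternell. Since $a$ is smooth with all fibres isomorphic to the same toric pair $(F, D_F)$ satisfying $K_F + D_F \equiv 0$, a canonical bundle formula in the smooth isotrivial case yields $K_X + D \equiv a^{*}K_T$; combined with nefness of $-(K_X+D)$ and surjectivity of $a$, this forces $-K_T$ to be nef, and then $K_T$ pseudoeffective together with $-K_T$ nef gives $K_T \equiv 0$.

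The analytic local triviality of $(X,D) \to T$ will follow from the natural action of the relative identity component $\mathrm{Aut}^{0}(X/T, D)$ of the log automorphism group scheme: its fibres are the maximal tori of the toric fibres of $a$, assembling into a torus bundle over $T$ acting on $X$ with an open dense orbit over each point, thereby producing the desired analytic trivializations. The main obstacle lies in the very first step — promoting the MRCC rational quotient to a smooth morphism and upgrading R-flatness of $T_F(-\log D_F)$ to genuine triviality on the rationally connected fibre. Both points are delicate, since rational curves need not generate $N_1(F)$, and both will rely entirely on the structural theory of R-flat log tangent bundles developed earlier in the paper.
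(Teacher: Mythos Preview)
Your identification of the MRCC fibration as the target map $a$ is correct and matches the paper's endpoint, and your arguments for the toric structure of the fibres, for $K_T\equiv 0$, and for the absence of rational curves in $T$ are essentially sound \emph{once} $a$ is known to be a smooth morphism. The gap is precisely where you locate it, but your parenthetical sketch---that the MRCC foliation $\sH$ is saturated in $T_X(-\log D)$, ``trivial along every rational curve, hence regular with compact leaves''---does not go through. Saturation in $T_X(-\log D)$ only gives that $\nu^*\sH$ is a \emph{subsheaf} of $\nu^*T_X(-\log D)\cong\sO_{\mathbb{P}^1}^{\oplus n}$ for each $\nu\colon\mathbb{P}^1\to X$; this neither forces $\nu^*\sH$ to be a subbundle nor says anything about the singular locus of $\sH$ at points not swept out by rational curves, and it gives no information whatsoever about smoothness of the base $T$.

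The paper's route to this step is far more indirect and constitutes the bulk of the argument. One first passes (Proposition~\ref{prop:main}) to a finite cover $X_1\to X$ quasi-\'etale off $D$ together with an almost-proper map $X_1\dashrightarrow T_1$ onto a $\mathbb{Q}$-factorial klt variety with $K_{T_1}\sim_{\mathbb{Z}}0$; this is built by running an MMP on $X$ down to a Mori fibre space, applying the descent theorem for R-flat bundles (Theorem~\ref{thm:main_descent_intro}) at each divisorial contraction and flip, and using the canonical bundle formula for generically isotrivial fibrations (Theorem~\ref{thm:cbf}) to control the base. Smoothness of $T_1$ is then extracted by \emph{induction on dimension applied to a component $C_2$ of the boundary}, not to a fibre: the residue sequence shows that $T_{C_2}(-\log(D_2-C_2)_{|C_2})$ is again R-flat, the inductive hypothesis produces a smooth base for $C_2$, and one identifies this with (an \'etale cover of) $T_1$. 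Only after all of this does the paper return to the MRCC of the original $X$: regularity of the MRCC foliation $\sH$ on $X$ is established by pulling back the defining twisted $q$-form to the cover, where $K_{T_1}\sim_{\mathbb{Z}}0$ forces $\det\sN_{\sH_1}\cong\sO_{X_1}$ and hence the form is nowhere vanishing; Lemma~\ref{lemma:smoothness} then converts regularity into a smooth morphism. Local triviality is obtained not via a relative $\textup{Aut}^0$ but by showing $T_T$ is locally a direct summand of the descended bundle $\sG^*\cong a_*T_X(-\log D)$ and integrating the lifted vector fields.
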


In fact, a more general statement is true (see Theorem \ref{thm:main}) but its formulation is somewhat involved.

\begin{rem}
Let $(X,D)\to T$ is a toric fiber bundle over a projective manifold $T$ with $K_T\equiv 0$. Suppose in addition that $T$ contains no rational curve. Then $T_X(-\textup{log}\, D)$ is obviously R-flat. Moreover, $K_X+D \equiv 0$ by Theorem \ref{thm:cbf}. 
\end{rem}

\begin{rem}\label{rem:torus_quotient}
In the setup of Theorem \ref{thm:main_intro}, we expect that $T$ is a torus quotient. Indeed, by the Beauville-Bogomolov decomposition theorem, $T$ admits a finite \'etale cover that decomposes into the product of an abelian variety and a simply-connected Calabi-Yau manifold. On the other hand, a folklore conjecture asserts that any projective Calabi-Yau manifold contains a rational curve. 
\end{rem}

This motivates the following question.

\begin{question}\label{question:torus_quotient}
Let $(X,D)$ be a log smooth reduced pair with $X$ projective. Suppose that $-(K_X+D)$ is nef and that 
$T_X(-\textup{log}\, D)$ is R-flat. Is $X$ a toric fiber bundle over an \'etale quotient of an abelian variety?
\end{question}

The following results are rather easy consequences of Theorem \ref{thm:main_intro} above.

\begin{cor}\label{cor:main_intro_1}
Let $(X,D)$ be a log smooth reduced pair with $X$ projective. Suppose that $-(K_X+D)$ is nef and that 
$T_X(-\textup{log}\, D)$ is R-flat. Suppose in addition that $X$ is simply-connected and that $h^{p,0}(X)=0$ for all
$1 \le p \le \dim X$. Then $X$ is a smooth toric variety with boundary divisor $D$. 
\end{cor}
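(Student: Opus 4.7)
The plan is to apply Theorem \ref{thm:main_intro} to obtain a smooth locally trivial fibration $a\colon X \to T$ whose fibers $F$ are smooth toric varieties with boundary $D_{|F}$, where $T$ is projective with $K_T \equiv 0$ and contains no rational curves. The additional hypotheses on $X$ should force $T$ to be a point, so that $X = F$ is itself a smooth toric variety with boundary divisor $D$.

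First I would show that $T$ is simply-connected. Smooth projective toric varieties being simply-connected, the homotopy long exact sequence
\[\pi_1(F) \to \pi_1(X) \to \pi_1(T) \to \pi_0(F)\]
of the analytically locally trivial fibration $a$ immediately yields $\pi_1(T) = 0$.

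Next I would show $h^{p,0}(T) = 0$ for $1 \le p \le \dim T$. I would use the standard decreasing filtration of $\Omega^p_X$ whose successive quotients are $a^*\Omega^k_T \otimes \Omega^{p-k}_{X/T}$. Smooth projective toric varieties satisfy $H^0(F, \Omega^j_F) = 0$ for $j \ge 1$, so $a_*\Omega^j_{X/T} = 0$ for $j \ge 1$; a short induction along the filtration, combined with the projection formula applied to the top piece $a^*\Omega^p_T$, then yields $a_*\Omega^p_X \cong \Omega^p_T$. Hence $H^0(X, \Omega^p_X) \cong H^0(T, \Omega^p_T)$, and the vanishing of $h^{p,0}(X)$ transfers to $T$.

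Finally I would invoke the Beauville-Bogomolov decomposition. Since $K_T \equiv 0$ on the projective manifold $T$, the first Chern class of $T$ vanishes in $H^2(T, \mathbb{R})$; combined with simply-connectedness, this yields $T \cong \prod Y_i \times \prod Z_j$ with $Y_i$ simply-connected Calabi-Yau and $Z_j$ irreducible hyperk\"ahler. Any such nontrivial factor would contribute a nonzero class to $h^{p,0}(T)$ for some $1 \le p \le \dim T$ (namely $p = \dim Y_i$ for a Calabi-Yau factor, or $p = 2$ for a hyperk\"ahler factor), contradicting the previous step. Hence the decomposition is trivial, $T$ is a point, and $X$ is a smooth toric variety with boundary divisor $D$. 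The main technical step is the filtration computation $a_*\Omega^p_X = \Omega^p_T$; the remaining ingredients (homotopy exact sequence, Beauville-Bogomolov) are standard.
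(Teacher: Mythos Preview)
Your proof is correct, but it is more elaborate than the paper's. Both proofs begin identically: apply Theorem~\ref{thm:main_intro} to obtain the locally trivial toric fibration $a\colon X\to T$, and use the homotopy sequence (with simply-connected toric fibers) to deduce $\pi_1(T)=\{1\}$. From there the paper proceeds more directly: simple connectedness of $T$ together with $K_T\equiv 0$ forces $K_T\sim_{\mathbb{Z}}0$ (since $\textup{Pic}^\tau(T)=0$), so $\omega_T$ has a nonzero global section, and pulling it back through the injection $a^*\Omega^p_T\hookrightarrow\Omega^p_X$ (with $p:=\dim T$) gives $h^{p,0}(X)\ge 1$, whence $p=0$. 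This avoids both the Koszul-filtration computation of $a_*\Omega^p_X$ and the appeal to the Beauville--Bogomolov decomposition. Your argument buys a slightly stronger intermediate statement (all $h^{p,0}(T)$ vanish, not just $h^{\dim T,0}$), but that extra strength is not needed here; the paper's route is shorter and uses only the single pullback $a^*\omega_T\hookrightarrow\Omega^{\dim T}_X$.
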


The next question is a special case of Question \ref{question:torus_quotient} above.

\begin{question}
Let $(X,D)$ be a log smooth reduced pair with $X$ projective. Suppose that $-(K_X+D)$ is nef and that 
$T_X(-\textup{log}\, D)$ is R-flat. Suppose in addition that $X$ is simply-connected. Is $X$ a smooth toric variety with boundary divisor $D$?
\end{question}

\begin{cor}\label{cor:main_intro_2}
Let $(X,D)$ be a log smooth reduced pair with $X$ projective. Suppose that $T_X(-\textup{log}\, D)$ is numerically flat.
Then there is a smooth morphism $a\colon X \to T$ with connected fibers onto a torus quotient $T$. 
The fibration $(X,D)\to T$ is locally trivial for the analytic topology and any fiber $F$ of the map $a$ is a smooth toric variety with boundary divisor $D_{|F}$.
\end{cor}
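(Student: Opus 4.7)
The plan is to reduce Corollary~\ref{cor:main_intro_2} to Theorem~\ref{thm:main_intro} and then upgrade the conclusion on the base $T$ using the extra information afforded by numerical flatness.

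The first step is to check that numerical flatness is much stronger than R-flatness: pulling $T_X(-\log D)$ back to $\mathbb{P}^1$ along any morphism yields a vector bundle that is both nef and has nef dual, so each Grothendieck summand $\cO(a_i)$ satisfies $a_i\ge 0$ and $-a_i\ge 0$, forcing triviality. Moreover $c_1(T_X(-\log D)) = -(K_X+D)$ is numerically trivial, so $-(K_X+D)$ is in particular nef. Theorem~\ref{thm:main_intro} therefore applies and produces the smooth morphism $a\colon X\to T$ with connected toric fibers $(F, D_{|F})$, analytically locally trivial over a smooth projective base $T$ with $K_T\equiv 0$ containing no rational curve.

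The second step is to show that $T_T$ itself is numerically flat. The log-smoothness of $(X,D)\to T$ gives the exact sequence
\begin{equation*}
0 \to T_{X/T}(-\log D) \to T_X(-\log D) \to a^*T_T \to 0,
\end{equation*}
so $a^*T_T$ is a quotient of a numerically flat bundle, hence numerically flat; since $a$ is surjective, nefness of $\cO(1)$ on $\mathbb{P}(T_T)$ (and on $\mathbb{P}(T_T^*)$) is reflected by the surjective base-change $\mathbb{P}(a^*T_T)\to\mathbb{P}(T_T)$, so $T_T$ itself is numerically flat.

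The main obstacle will be to promote the pair of conditions $K_T \equiv 0$ and $T_T$ numerically flat to the conclusion that $T$ is a torus quotient. My plan is to combine the Beauville--Bogomolov decomposition with the Demailly--Peternell--Schneider theorem that a numerically flat bundle on a projective manifold is a successive extension of hermitian flat bundles. A finite \'etale cover $\widetilde T \cong A \times \prod_i Y_i \times \prod_j Z_j$ splits $T_{\widetilde T}$ as a direct sum, and numerical flatness passes to each simply-connected Calabi--Yau factor $Y_i$ and each irreducible hyperk\"ahler factor $Z_j$. On such a factor the DPS filtration has trivial graded pieces, because hermitian flat bundles on a simply-connected manifold are trivial, so each $T_{Y_i}$ and $T_{Z_j}$ admits a non-zero global section. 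However, on a Calabi--Yau $Y_i$ of dimension $n\ge 1$ the contraction with the volume form identifies $T_{Y_i} \cong \Omega^{n-1}_{Y_i}$, and $\h^0(Y_i, \Omega^{n-1}_{Y_i}) = \h^{n-1,0}(Y_i) = 0$; on an irreducible hyperk\"ahler $Z_j$ the symplectic form identifies $T_{Z_j} \cong \Omega^1_{Z_j}$ with $\h^{1,0}(Z_j) = 0$. Hence each such factor must be a point, $\widetilde T = A$, and $T$ is a torus quotient as required.
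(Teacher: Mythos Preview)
Your proof is correct, and both of its main steps take a route different from the paper's.

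For the second step (showing $T_T$ is numerically flat), the paper first invokes the descent theorem (Theorem~\ref{thm:main_descent_intro}) to write $T_X(-\log D)\cong a^*\sG$ for a bundle $\sG$ on $T$, then uses a DPS-type argument (via a nonzero map $\wedge^p\sG\to\sO_T(-K_T)$) to see that the induced map $\sG\to T_T$ is surjective. You bypass descent entirely: the analytic local triviality from Theorem~\ref{thm:main_intro} directly gives the short exact sequence with $a^*T_T$ as quotient, and then you descend numerical flatness along the surjection $a$. This is cleaner and avoids appealing to Theorem~\ref{thm:main_descent_intro}.

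For the third step (showing $T$ is a torus quotient), the paper argues that numerical flatness of $T_T$ forces $c_1(T)=c_2(T)=0$ (DPS), and then invokes Yau's theorem to conclude that $T$ admits a flat K\"ahler metric and hence is a torus quotient. You instead run the Beauville--Bogomolov decomposition and use the DPS filtration of a numerically flat bundle by hermitian-flat pieces: on each simply-connected Calabi--Yau or irreducible hyperk\"ahler factor this forces $h^0$ of the tangent bundle to be positive, contradicting the Hodge-number constraints. Your argument is longer and still ultimately rests on Yau (through Beauville--Bogomolov), but it is a nice alternative that makes the obstruction on each factor explicit. The paper's route is shorter; yours trades the $c_2$ computation for structural information about numerically flat bundles.
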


The next result says in particular that Corollary \ref{cor:main_intro_2} applies to pairs with flat logarithmic tangent sheaf. 

\begin{prop}\label{prop:main_intro_3}
Let $(X,D)$ be a log smooth reduced pair with $X$ projective. If $T_X(-\textup{log}\, D)$ admits a holomorphic connection, then 
it is numerically flat.
\end{prop}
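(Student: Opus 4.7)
The plan is to combine the vanishing of Chern classes forced by the Atiyah class (Atiyah's theorem) with a slope--semistability argument, then invoke Kobayashi--Hitchin / Bando--Siu to conclude that $\sE := T_X(-\log D)$ is in fact Hermitian flat, hence numerically flat.

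First, the existence of a holomorphic connection on $\sE$ is equivalent to the vanishing of the Atiyah class $\textup{at}(\sE) \in H^1(X, \Omega^1_X \otimes \textup{End}(\sE))$. By Atiyah's theorem, every Chern class $c_k(\sE) \in H^k(X, \Omega^k_X)$ vanishes, and via the Hodge decomposition $c_k(\sE) = 0$ in $H^{2k}(X, \bQ)$ for all $k$. In particular, $c_1(\sE) \equiv 0$, i.e.\ $K_X + D \equiv 0$ (so $-(K_X+D)$ is nef), and $c_2(\sE) \cdot H^{n-2} = 0$ for every ample $H$. Pulling back the connection along any $\nu \colon \mathbb{P}^1 \to X$, Grothendieck's splitting theorem together with Atiyah's criterion on $\mathbb{P}^1$ (no non-trivial line bundle on $\mathbb{P}^1$ admits a holomorphic connection) forces $\nu^* \sE$ to be trivial; thus $\sE$ is already R-flat.

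Next, I would establish slope-semistability of $\sE$ with respect to a suitable polarization $H$. Since $-(K_X+D) \equiv 0$, a logarithmic analog of the generic nefness theorem for the tangent bundle when $-K_X$ is nef (in the spirit of Miyaoka and Cao) yields that $\sE$ is generically nef. Combined with $c_1(\sE) \cdot H^{n-1} = 0$, this forces every Harder--Narasimhan slope of $\sE$ to be zero, hence slope-semistability. Bogomolov's discriminant inequality is then an equality (because $c_1^2$ and $c_2$ both vanish in the relevant intersections with $H^{n-2}$), so the Kobayashi--Hitchin / Bando--Siu correspondence endows $\sE$ with a Hermitian--Einstein metric; the simultaneous vanishing of all Chern classes forces both the central curvature and the trace-free part to vanish, so $\sE$ is projectively flat with trivial determinant, i.e.\ Hermitian flat, and therefore numerically flat.

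The main obstacle is the slope-semistability step. The existence of a holomorphic connection alone does not force semistability: on a curve of genus $g \geq 2$ one can build indecomposable flat extensions of degree zero that contain line subbundles of positive degree, and such bundles admit holomorphic connections by Atiyah's criterion but are not semistable (hence not numerically flat). One must therefore genuinely exploit the log-tangent structure of $\sE$ together with the numerical triviality of $K_X+D$, via the logarithmic generic nefness result developed elsewhere in the paper.
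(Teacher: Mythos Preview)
Your overall architecture is exactly the paper's: vanishing of all Chern classes (via the Atiyah class), plus slope-semistability with respect to some polarization, plus a Simpson/Kobayashi--Hitchin type conclusion. You have correctly isolated the only nontrivial step, namely semistability, and your counterexample on curves of genus $\ge 2$ is to the point: the holomorphic connection by itself cannot supply it.

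Where your proposal remains incomplete is precisely there. The paper does \emph{not} develop a ``logarithmic generic nefness'' result internally; it imports the semistability statement wholesale from Campana--P\u{a}un \cite[Theorem 1.3]{campana_paun19} (see also Schnell \cite{schnell_CP}): for a log smooth pair $(X,D)$ with $K_X+D$ pseudo-effective, $\Omega^1_X(\log D)$ is slope-semistable with respect to any polarization. Since $c_1(\sE)=0$ this is equivalent to slope-semistability of $\sE=T_X(-\log D)$. So your intuition (Miyaoka/Cao style generic nefness, in log form, using $K_X+D\equiv 0$) points to the right theorem, but you should cite it rather than expect it to appear elsewhere in this paper.

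For the final step, the paper invokes Simpson \cite[Corollary 3.10]{simpson_higgs_flat} directly: a slope-semistable bundle with vanishing Chern classes on a projective manifold is an iterated extension of unitary-flat bundles, hence numerically flat. Your route via Bando--Siu/Kobayashi--Hitchin is morally the same, but as written it is slightly loose: the correspondence produces a genuine Hermitian--Einstein metric only on the \emph{stable} graded pieces, and one then has to argue that successive extensions of Hermitian-flat bundles remain numerically flat. Simpson's statement packages this for you. Finally, your R-flatness paragraph (pullback of the connection to $\mathbb{P}^1$) is correct and pleasant, but it is not used in the paper's proof of this proposition; numerical flatness here is obtained without ever restricting to rational curves.
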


The proof of Theorem \ref{thm:main_intro} relies in part on a descent theorem for vector bundles which is of independent interest. 
Our result extends \cite[Theorem 1.1]{biswas_vb_rc} to the relative setting.

\begin{thm}\label{thm:main_descent_intro}
Let $f \colon X \to Y$ be a projective morphism with connected fibers of normal, quasi-projective varieties. 
Suppose that $X$ is smooth and that $Y$ is klt. Suppose in addition that $f$ has rationally chain connected fibers. Let $\sE$ be a locally free, $f$-relatively R-flat sheaf on $X$. Then there exists a locally free sheaf $\sG$ on $Y$ such that $\sE\cong f^* \sG$.
\end{thm}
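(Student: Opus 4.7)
The plan is to reduce to the absolute case \cite[Theorem~1.1]{biswas_vb_rc} fiber by fiber and then descend via the cohomology-and-base-change theorem; the control of non-smooth fibers of $f$ will be the main technical obstacle. Set $r := \textup{rk}\,\sE$, and let $Y^\circ \subseteq Y$ be the open subset over which $f$ is smooth, with $X^\circ := f^{-1}(Y^\circ)$. For every $y \in Y^\circ$ the fiber $X_y$ is smooth, projective, and rationally chain connected, hence rationally connected in characteristic zero by Koll\'ar--Miyaoka--Mori; in particular $H^i(X_y, \cO_{X_y}) = 0$ for all $i > 0$. Any morphism $\nu \colon \mathbb{P}^1 \to X_y$ composes with the closed immersion $X_y \hookrightarrow X$ to a morphism whose image lies in a fiber of $f$, so the $f$-relative R-flatness of $\sE$ forces $\sE|_{X_y}$ to be R-flat on $X_y$. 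The absolute case then yields $\sE|_{X_y} \cong \cO_{X_y}^{\oplus r}$, so that $h^0(X_y,\sE|_{X_y}) = r$ and $h^i(X_y,\sE|_{X_y}) = 0$ for $i > 0$, and cohomology and base change shows that $\sG^\circ := (f|_{X^\circ})_*(\sE|_{X^\circ})$ is locally free of rank $r$ on $Y^\circ$ with the evaluation map $f^*\sG^\circ \to \sE|_{X^\circ}$ an isomorphism.

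The main difficulty is the extension across $Y \setminus Y^\circ$, which may have codimension one: it is here that the smoothness of $X$ and the klt hypothesis on $Y$ must be fully exploited. Define $\sG := f_*\sE$ on $Y$; this coherent sheaf restricts to $\sG^\circ$ over $Y^\circ$, and the natural evaluation map $f^*\sG \to \sE$ extends the one above. It suffices to verify that $h^0(X_y,\sE|_{X_y}) = r$ and $h^i(X_y,\sE|_{X_y}) = 0$ for $i > 0$ for \emph{every} $y \in Y$: cohomology and base change will then give $\sG$ locally free of rank $r$ globally, and $f^*\sG \to \sE$ will be an isomorphism on every fiber (as $\sE|_{X_y}$ is trivial) and hence globally by Nakayama. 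For a singular fiber $X_y$, take a resolution $\pi \colon \widetilde{X}_y \to X_y$: the smooth projective variety $\widetilde{X}_y$ is rationally chain connected (being birational to the RCC variety $X_y$), and $\pi^*(\sE|_{X_y})$ is R-flat on $\widetilde{X}_y$, so by the absolute case it is trivial. The hard part is to transfer the conclusion back to $X_y$ itself: one combines the smoothness of $X$ and the klt hypothesis on $Y$ with vanishing theorems of Koll\'ar--Kawamata--Viehweg type to control $R\pi_*\cO_{\widetilde{X}_y}$ (essentially to obtain a rational-singularity-type behavior for $X_y$), and the projection formula then yields the desired cohomological identities on $X_y$.
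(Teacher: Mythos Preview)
Your approach differs substantially from the paper's, and the second paragraph contains a genuine gap. The assertion that the resolution $\widetilde{X}_y$ is rationally chain connected ``being birational to the RCC variety $X_y$'' is false: rational chain connectedness is not a birational invariant for singular varieties. A concrete instance compatible with all hypotheses of the theorem is the pencil $X=\{x^3+y^3+z^3+tw^3=0\}\subset\mathbb{P}^3\times\mathbb{A}^1_t\to\mathbb{A}^1=Y$. Here $X$ and $Y$ are smooth, every fiber is an irreducible cubic surface and hence RCC, but $X_0$ is the cone over the Fermat plane cubic $E$ and its resolution is a ruled surface over $E$, which is \emph{not} RCC; thus \cite[Theorem~1.1]{biswas_vb_rc} does not apply to $\widetilde{X}_0$. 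In the same example $X_0$ fails to have rational singularities (the vertex is an elliptic singularity, so $R^1\pi_*\sO_{\widetilde{X}_0}\neq 0$), and the appeal to ``rational-singularity-type behaviour for $X_y$'' via Koll\'ar--Kawamata--Viehweg vanishing cannot be made precise. The cohomological control you need over bad fibers is simply not available by this route.

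There is a second, independent problem: the Grauert-type cohomology-and-base-change argument you invoke globally requires $f$ to be flat, and flatness is neither assumed nor automatic---it fails outright in the birational case $\dim X=\dim Y$, which is certainly allowed by the hypotheses. The paper circumvents both obstacles by an entirely different strategy. It argues by induction on $\dim X-\dim Y$, the base case $\dim X=\dim Y$ being the birational descent theorem \cite[Theorem~5.1]{kebekus_al_naht}. For the inductive step one runs a relative MMP on $X$ over $Y$, ending in a Mori fiber space $X_m\to X_{m+1}$. An auxiliary descent result (Theorem~\ref{thm:descent}, proved by analysing $\mathbb{P}_X(\sE)$ via the relative base-point-free theorem) pushes $\sE$ across each divisorial contraction, across each flip, and finally across $X_m\to X_{m+1}$; the induction hypothesis is then applied to a resolution of $X_{m+1}$ over $Y$. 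No fiberwise triviality on singular fibers and no flatness of $f$ are ever used.
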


\subsection*{Structure of the paper} 
Section 2 gathers notation, global conventions, and known results that will be used throughout the paper. We also establish some facts. In particular, we establish a number of properties of R-flat vector bundles.
In section 3, we prove a canonical bundle formula for generically isotrivial fibrations.
Section 4 is devoted to the proof of Theorem \ref{thm:main_descent_intro}. Section 5 prepares for the proof of the main results. 
With these preparations at hand, the proof of Theorem \ref{thm:main_intro} as well as the proofs of Corollaries \ref{cor:main_intro_1} and \ref{cor:main_intro_2} and Proposition \ref{prop:main_intro_3}, which we give in Section 6, become reasonably short.

\subsection*{Acknowledgements} The authors would like to thank Enrica Floris for discussions concerning the canonical bundle formula.
We would also like to thank Beno\^it Claudon for helpful conversations.

The first author was partially supported by the ERC project ALKAGE (ERC grant Nr 670846), the CAPES-COFECUB project Ma932/19 and the ANR project Foliage (ANR grant Nr ANR-16-CE40-0008-01).

The second author was supported by the LABEX MILYON (ANR-10-LABX-0070) of Universit\'e de Lyon, within the program \textit{Investissements d'Avenir} (ANR-11-IDEX- 0007) operated by the ANR.

\section{Notation, convention and used facts}

\subsection{Global conventions} Throughout the paper, all varieties are assumed to be defined over the field of complex numbers.
Given a variety $X$, we denote by $X_\textup{reg}$ its smooth locus.
 
\subsection{Pull-back of Weil divisors}\label{definition:pull-back}
Let $\psi\colon X \to Y$ be a dominant equidimensional morphism of normal varieties, and let $D$ be a Weil $\mathbb{Q}$-divisor on $Y$. The \textit{pull-back $\psi^*D$ of $D$} is defined as follows. We define 
$\psi^*D$ to be the unique $\mathbb{Q}$-divisor on $X$ whose restriction to 
$\psi^{-1}(Y_{\textup{reg}})$ is $(\psi_{|\psi^{-1}(Y_{\textup{reg}})})^*(D_{|Y_{\textup{reg}}})$. This construction agrees with the usual pull-back if $D$ is $\mathbb{Q}$-Cartier.
  
\subsection{Exceptional divisor} We will need the following definition.

\begin{defn}\label{defn:exceptional}
Let $f\colon X \map Y$ be a dominant rational map of normal varieties. Suppose in addition that $Y$ is projective.
A prime divisor $Q$ on $X$ is called \textit{$f$-exceptional} if $\textup{codim}_Y f(Q) \ge 2$.
\end{defn}

\begin{rem}
Setup as in Definition \ref{defn:exceptional}. The image $f(Q)$ of $Q$ is well-defined since $Y$ is projective by assumption.
\end{rem}

\subsection{Projective space bundle}
If $\sE$ is a locally free sheaf of finite rank on a variety $X$, 
we denote by $\mathbb{P}_X(\sE)$ the variety $\textup{Proj}_X\big(\textup{S}^\bullet\sE\big)$,
and by $\sO_{\mathbb{P}_X(\sE)}(1)$ its tautological line bundle.

\subsection{Reflexive hull}
Given a normal variety $X$ and a coherent sheaf $\sE$ on $X$, write $\det\sE:=(\Lambda^{\textup{rank} \,\sE}\sE)^{**}$.
Given any morphism $f \colon Y \to X$ of normal varieties, write $f^{[*]}\sE:=(f^*\sE)^{**}.$ 

\subsection{Singularities of pairs}

A \textit{pair} $(X,D)$ consists of a normal quasi-projective variety $X$ and a (not necessarily effective) $\mathbb{Q}$-divisor $D$ on $X$.
A \textit{reduced pair} is a pair $(X,D)$ such that $D$ is effective and reduced.
We will use the notions of canonical, klt, and log canonical singularities for pairs
without further explanation or comment and simply refer to \cite{kollar97}
for a discussion and for their precise definitions.

The following elementary fact will be used throughout the paper (see \cite[Proposition 3.16]{kollar97}).

\begin{fact}\label{fact:quasi_etale_cover_and_singularities}
Let $\gamma\colon X_1 \to X$ be a finite cover between normal complex varieties. Let $D$ be a $\mathbb{Q}$-divisor on $X$, and set
$D_1:=\gamma^*(K_X+D)-K_{X_1}$. Then $(X,D)$ is klt (resp. log canonical) if and only $(X_1,D_1)$ is klt (resp. log canonical).
\end{fact}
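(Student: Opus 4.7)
The plan is to reduce the statement to the classical discrepancy formula for finite covers by working on a suitably chosen common log resolution. First, I would pick a log resolution $\pi\colon Y \to X$ of $(X,D)$ such that the exceptional locus together with the preimage of the branch divisor of $\gamma$ is a simple normal crossings divisor. I would then form the normalization $Z$ of the fibre product $Y \times_X X_1$ and choose a log resolution $\pi_1\colon Y_1 \to X_1$ of $(X_1,D_1)$ that factors through $Z$. This yields a commutative square with a generically finite morphism $g\colon Y_1 \to Y$ satisfying $\gamma \circ \pi_1 = \pi \circ g$, in which both $\pi$ and $\pi_1$ are log resolutions.

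The next step is to relate the two pullbacks. By definition, $\gamma^*(K_X+D) = K_{X_1}+D_1$, so
$$\pi_1^*(K_{X_1}+D_1) \;=\; g^*\pi^*(K_X+D).$$
On the other hand, since $Y$ and $Y_1$ are smooth and $g$ is generically finite, Riemann--Hurwitz provides an effective ramification divisor $R$ on $Y_1$ with $K_{Y_1} = g^*K_Y + R$. Writing $K_Y = \pi^*(K_X+D) + \sum_i a_i E_i - \pi_*^{-1}D$ and $K_{Y_1} = \pi_1^*(K_{X_1}+D_1) + \sum_j a'_j F_j - (\pi_1)_*^{-1}D_1$, a direct comparison of these identities gives, for every prime divisor $F$ on $Y_1$ dominating a prime divisor $E$ on $Y$, the relation
$$a(F, X_1, D_1) + 1 \;=\; e_F \cdot \big(a(E, X, D) + 1\big),$$
where $e_F \ge 1$ is the ramification index of $g$ along $F$.

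Since $e_F \ge 1$, the sign of $a(F,X_1,D_1)+1$ agrees with that of $a(E,X,D)+1$, which immediately yields the equivalences for klt (all discrepancies $> -1$) and for log canonical (all discrepancies $\ge -1$). To upgrade this from the particular resolutions $Y, Y_1$ to \emph{all} divisors over $X$ and $X_1$, one uses that the total discrepancy $\discrep(X,D)$ is computed on any sufficiently high log resolution, and that given any divisor $F$ over $X_1$ we may, after further blowing up, assume it lies on such a $Y_1$ as above with $g(F)$ a divisor on a correspondingly high model $Y$ over $X$.

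The step I expect to be the most delicate is arranging the resolutions so that every relevant divisor over $X_1$ (or $X$) actually appears on $Y_1$ (respectively $Y$) after a compatible blow-up, and verifying that the local analytic ramification picture of $g$ along each $F$ really gives the multiplicative factor $e_F$; this is standard but requires care because $\gamma$ itself need not be \'etale in codimension one and may acquire new branch components after blowing up. Everything else is formal bookkeeping on the two smooth total spaces $Y$ and $Y_1$.
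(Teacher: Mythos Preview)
The paper does not give a proof of this statement: it is stated as a Fact with a reference to \cite[Proposition 3.16]{kollar97} and used freely thereafter. Your outline is precisely the standard argument that underlies that reference (and the closely related \cite[Proposition 5.20]{kollar_mori}): pass to compatible log resolutions, use Riemann--Hurwitz on the smooth models, and read off the multiplicative relation $a(F,X_1,D_1)+1 = e_F\,(a(E,X,D)+1)$ between log discrepancies. So your approach is correct and coincides with what the cited literature does; there is nothing to compare against in the paper itself.

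One small remark: in your write-up you should state explicitly at the outset that $K_X+D$ is assumed $\mathbb{Q}$-Cartier (otherwise $\gamma^*(K_X+D)$ and the notion of klt/log canonical are not defined), and you may want to note that the ``delicate'' step you flag is handled once and for all by choosing $\pi$ so that the branch locus of $\gamma$ pulls back to an snc divisor on $Y$, which forces $g\colon Y_1\to Y$ to be locally toric (Abhyankar's lemma) and makes the ramification computation transparent.
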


We will also need the following definition.

\begin{defn}
A normal, quasi-projective variety $X$ is said to be of \textit{klt type} if there exists an effective $\mathbb{Q}$-divisor $D$ on $X$ such that $(X,D)$ is klt.
\end{defn}

\begin{rem}
If $X$ is of klt type and $\mathbb{Q}$-factorial, then $X$ has klt singularities.
\end{rem}

\begin{lemma}\label{lemma:klt_singularities_finite_cover}
Let $X$ be a normal projective variety and let $D$ be a reduced effective divisor on $X$ such that $K_X+D$ is $\mathbb{Q}$-Cartier.
Let $\gamma\colon X_1 \to X$ be a finite cover of normal projective varieties. Suppose that $\gamma$ is quasi-\'etale over $X \setminus \textup{Supp}\,D$.
Then the following holds.
\begin{enumerate}
\item The divisor $D_{1,\varepsilon}:=\gamma^*\big(K_X+(1-\varepsilon)D\big)-K_{X_1}$ is effective if $0\le \varepsilon \ll 1$. The divisor 
$D_1:=\gamma^*(K_X+D)-K_{X_1}$ is (effective and) reduced, and $\gamma^{-1}(\textup{Supp}\,D)\subseteq \textup{Supp}\,D_1$. 
\item Suppose in addition that $(X,D)$ is log canonical and that $X$ is $\mathbb{Q}$-factorial with klt singularities. Then $X_1$ is of klt type.
\end{enumerate}
\end{lemma}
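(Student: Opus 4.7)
The plan is to compute $D_1$ and $D_{1,\varepsilon}$ explicitly in terms of the ramification indices of $\gamma$. Since both are Weil $\mathbb{Q}$-divisors on the normal variety $X_1$, it suffices to determine their coefficient along every prime divisor. I would work on the open subset $X^\circ := X_\textup{reg}$, where $K_X$ and $D$ are both Cartier (the former since $X^\circ$ is smooth, the latter since every Weil divisor on a smooth variety is Cartier), and on $X_1^\circ := \gamma^{-1}(X^\circ)$, whose complement in $X_1$ has codimension $\geq 2$, so that every prime divisor of $X_1$ meets $X_1^\circ$. For each prime divisor $E$ on $X_1$ meeting $X_1^\circ$, let $e_E$ denote the ramification index of $\gamma$ at the generic point of $E$; by the quasi-\'etale hypothesis, $e_E = 1$ unless $E \subseteq \gamma^{-1}(\Supp D)$.

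The main point, which is also the only subtle step, is to verify the Riemann--Hurwitz formula in codimension one without any smoothness assumption on $X_1$: one localises at codimension-one points of $X_1$, where the local rings are DVRs, the extension is automatically tame (characteristic zero), and the different on $E$ equals $e_E - 1$. Combined with the elementary formula for the pullback of Cartier divisors, this yields on $X_1^\circ$ the identities
\[
K_{X_1^\circ} = \gamma^{*} K_{X^\circ} + \sum_{E}(e_E - 1)\,E \qquad \textrm{and} \qquad \gamma^{*} D\big|_{X_1^\circ} = \sum_{E \subseteq \gamma^{-1}(\Supp D)} e_E\,E,
\]
the sums running over prime divisors $E$ of $X_1^\circ$. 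Subtracting and extending uniquely across the codimension $\geq 2$ complement gives $D_1 = \sum_{E \subseteq \gamma^{-1}(\Supp D)} E$, which is reduced with support exactly $\gamma^{-1}(\Supp D)$; moreover $D_{1,\varepsilon} = D_1 - \varepsilon\,\gamma^{*} D = \sum_{E} (1 - \varepsilon\, e_E)\,E$, which is effective for every $\varepsilon \leq 1/\max_E e_E$. This settles (1).

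For (2), since $X$ is $\mathbb{Q}$-factorial, $D$ is $\mathbb{Q}$-Cartier and the boundary $(1-\varepsilon)D$ makes sense; the standard interpolation of discrepancies
\[
a(F, X, (1-\varepsilon)D) = (1-\varepsilon)\,a(F, X, D) + \varepsilon\,a(F, X, 0),
\]
combined with the fact that klt discrepancies on a fixed log resolution are uniformly bounded away from $-1$, shows that the pair $(X, (1-\varepsilon)D)$ is klt for $0 < \varepsilon \ll 1$. Applying Fact \ref{fact:quasi_etale_cover_and_singularities} to the boundary $(1-\varepsilon)D$ then yields that $(X_1, D_{1,\varepsilon})$ is klt, and part (1) guarantees that $D_{1,\varepsilon}$ is effective for $\varepsilon$ small enough, which exhibits $X_1$ as a variety of klt type.
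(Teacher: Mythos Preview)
Your proof is correct and follows essentially the same approach as the paper's: both compute the multiplicity of $D_{1,\varepsilon}$ along each prime divisor via Riemann--Hurwitz, then deduce (2) by observing that $(X,(1-\varepsilon)D)$ is klt and invoking Fact~\ref{fact:quasi_etale_cover_and_singularities}. Your write-up is somewhat more explicit about the codimension-one localisation and the discrepancy interpolation, but the arguments are the same in substance.
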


\begin{proof}
Let $Q$ be a prime divisor on $X_1$ and let $m$ be the ramification index of $\gamma$ along $Q$. Set $P:=\gamma(Q)$. 
A straightforward local computation then shows that
$$\textup{mult}_Q\,D_{1,\varepsilon} = -(m-1)+m(1-\varepsilon)\textup{mult}_P\, D.$$
By assumption, any irreducible component of the branch locus of $\gamma$ which has codimension $1$ in $X$ is contained in the support of $D$.
Thus, if $P$ is not contained in the support of $D$, then we must have $m=1$. Item (1) follows easily.

Suppose from now on that $(X,D)$ is log canonical and that $X$ is $\mathbb{Q}$-factorial with klt singularities. Then 
$\big(X,(1-\varepsilon) D\big)$ is klt for any $0<\varepsilon \le 1$. This implies that the pair $(X_1,D_{1,\varepsilon})$ is klt as well. Item (2) now follows from Item (1), completing the proof of the lemma. 
\end{proof}

\subsection{Logarithmic differential forms}
Let $X$ be a smooth variety, and let
$D \subset X$ a divisor with simple normal crossings. Let  
$$T_X(- \textup{log}\, D) \subseteq T_X = \Der_{\mathbb{C}}(\sO_X)$$ 
be the subsheaf consisting of those derivations that preserve the ideal sheaf $\sO_X(-D)$. One easily checks that the 
\textit{logarithmic tangent sheaf} $T_X(-\textup{log}\,D)$ is a locally free sheaf of Lie subalgebras of $T_X$, having the same restriction 
to $X \setminus D$.
Set $n:=\dim X$. If $D$ is defined at $x$ by the equation $x_1\cdots x_k=0$, where $x_1,\ldots,x_k$ form part of a regular system of parameters $(x_1,\ldots,x_n)$ of the local ring $\sO_{X,x}$ of $X$ at $x$, then a local basis of $T_X(- \textup{log}\,D)$ (after localization at $x$) consists of
$$x_1 \partial_1, \ldots, x_k \partial_k, \partial_{k+1}, \ldots, \partial_n,$$ 
where $(\partial_1, \ldots, \partial_n)$ is the local basis of $T_X$
dual to the local basis $(dx_1, \ldots, dx_n)$ of $\Omega^1_X$.

A local computation shows that $T_X(-\textup{log}\, D)$ can be identified with the subsheaf of $T_X$ containing those vector fields that are tangent to $D$ at smooth points of $D$.

The dual of $T_X(-\textup{log}\, D)$ is the sheaf $\Omega^1_X( \textup{log}\,D)$ of logarithmic differential $1$-forms. More generally,
if $1 \le p \le n$, then $\Omega^p_X( \textup{log}\,D):=\wedge^p \Omega^1_X( \textup{log}\,D)$ is the sheaf \textit{logarithmic differential $p$-forms}, that is, of rational $p$-forms $\alpha$ on $X$ such that $\alpha$ and $d\alpha$ have at most simple poles along $D$. 
The top exterior power $\det \Omega^1_X( \textup{log}\,D)=\Omega^n_X( \textup{log}\,D)$ is the invertible sheaf $\sO_X(K_X+D)$, where $K_X$ denotes a canonical divisor.

\subsection{Reflexive (logarithmic) differentials forms} \label{subsection:pull-back}
Given a normal variety $X$, we denote the sheaf of K\"{a}hler differentials by
$\Omega^1_X$. 
If $0 \le p \le \dim X$ is any integer, write
$\Omega_X^{[p]}:=(\Omega_X^p)^{**}$.
The tangent sheaf $(\Omega_X^1)^*$ will be denoted by $T_X$.

If $D$ is a reduced effective divisor on $X$ we denote by 
$(X,D)_{\textup{reg}}$ the open set where $(X,D)$ is log smooth. 
If $1 \le p \le \dim X$ is any integer, we write $\Omega_X^{[p]}(\textup{log}\, D)$ for
the reflexive sheaf on $X$ whose restriction to $U:=(X,D)_{\textup{reg}}$ is the sheaf of logarithmic differential forms 
$\Omega_U^{p}(\textup{log}\, D_{|U})$. We will refer to it as the sheaf of \textit{reflexive logarithmic $p$-forms}.
Suppose that $X$ is smooth and let $t$ be a defining equation for $D$ on some open set $X^\circ$.
Let $\alpha$ be a rational $p$-form on $X$. Then $\alpha$ is a reflexive logarithmic $p$-form on $X^\circ$
if and only if $t\alpha$ and $td\alpha$ are regular on $X^\circ$ (see \cite{saito}).

The dual of $\Omega^1_X( \textup{log}\,D)$ is \textit{logarithmic tangent sheaf} $T_X(-\textup{log}\, D)$.

\begin{lemma}\label{lemma:pull_back_cover}
Let $\gamma\colon X_1 \to X$ be a finite cover between normal varieties, and let $D$ be a reduced effective divisor on $X$.
Let $1\le p \le \dim X$ be any integer.
\begin{enumerate}
\item If $D_1$ is a reduced effective divisor on $X_1$ such that $\gamma^{-1}(\textup{Supp}\,D)\subseteq \textup{Supp}\,D_1$,
then the standard pull-back map of K\"ahler differentials induces an injective map of reflexive sheaves
$$\gamma^{[*]}\Omega_X^{[p]}(\textup{log}\, D) \into \Omega_{X_1}^{[p]}(\textup{log}\, D_1).$$
\item Suppose that $\gamma$ is quasi-\'etale over $X \setminus \textup{Supp}\,D$ and set $D_1:=\gamma^*(K_X+D)-K_{X_1}$. Then $D_1$ is reduced and effective. Moreover, the standard pull-back map of K\"ahler differentials induces an isomorphism
$$\gamma^{[*]}\Omega_X^{[p]}(\textup{log}\, D) \cong \Omega_{X_1}^{[p]}(\textup{log}\, D_1).$$
\end{enumerate}
\end{lemma}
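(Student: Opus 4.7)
Both sheaves are reflexive on $X_1$, so the entire statement reduces to constructing the map and analysing it on the big open
\[
W_1 := \gamma^{-1}\bigl((X,D)_{\textup{reg}}\bigr) \cap (X_1,D_1)_{\textup{reg}} \subseteq X_1,
\]
whose complement has codimension at least two; on $W_1$ both sheaves are locally free logarithmic cotangent sheaves in SNC coordinates.

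\textbf{Part (1).} On $W_1$ the ordinary pullback of K\"ahler differentials yields $\gamma^{*}\Omega^p_X(\textup{log}\,D) \to \Omega^p_{X_1}(\textup{log}\,D_1)$ provided logarithmic forms pull back to logarithmic forms. Locally, with $D=\{x_1\cdots x_k=0\}$ for part of a regular system of parameters, $\Omega^1_X(\textup{log}\,D)$ is freely generated by the $dx_i/x_i$ and $dx_j$. Factoring each $x_i\circ\gamma$ as a unit times a product of local equations of the components of $\gamma^{-1}(D^{(i)})$, the hypothesis $\gamma^{-1}(\textup{Supp}\,D) \subseteq \textup{Supp}\,D_1$ forces every such component to be a component of $D_1$; hence $\gamma^{*}(dx_i/x_i)$ is a combination of logarithmic $1$-forms $dy/y$ along $D_1$ plus a regular $1$-form, and therefore lies in $\Omega^1_{X_1}(\textup{log}\,D_1)$. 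Taking $p$-th exterior powers and pushing forward along $W_1 \hookrightarrow X_1$ produces the desired map of reflexive sheaves. Injectivity is automatic, since a finite cover of normal varieties in characteristic zero is generically \'etale, the map is then an isomorphism on a dense open, and the source is torsion-free.

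\textbf{Part (2).} By Lemma \ref{lemma:klt_singularities_finite_cover}(1) the divisor $D_1$ is reduced and effective with $\gamma^{-1}(\textup{Supp}\,D) \subseteq \textup{Supp}\,D_1$, so (1) supplies an injection $\phi_p$ of reflexive sheaves of the same rank. On $W_1$ one has $\phi_p = \wedge^p \phi_1$, so by reflexivity it suffices to show that $\phi_1$ is an isomorphism, and for this I pass to determinants. The relation $K_{X_1}+D_1 = \gamma^{*}(K_X+D)$ defining $D_1$ gives
\[
\det \gamma^{[*]}\Omega^{[1]}_X(\textup{log}\,D) = \gamma^{[*]}\sO_X(K_X+D) = \sO_{X_1}(K_{X_1}+D_1) = \det \Omega^{[1]}_{X_1}(\textup{log}\,D_1),
\]
so $\det\phi_1$ is an endomorphism of the invertible sheaf $\sO_{X_1}(K_{X_1}+D_1)$, i.e.\ multiplication by a global regular function $f$ on $X_1$. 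It is enough to check that $f$ is non-vanishing at every codimension-one point of $X_1$. Outside $\textup{Supp}\,D_1$ this is automatic, since $\gamma$ is \'etale there (being quasi-\'etale on $X\setminus\textup{Supp}\,D$ with non-\'etale locus of codimension $\ge 2$). At the generic point $\xi$ of a component $Q$ of $D_1$, choose analytic coordinates $(t,z_1,\dots,z_{n-1})$ near $\gamma(\xi)$ and $(s,w_1,\dots,w_{n-1})$ near $\xi$ with $D=\{t=0\}$, $D_1=\{s=0\}$, $t\circ\gamma = u \cdot s^m$ for $u$ a unit and $m$ the ramification index of $\gamma$ along $Q$, and the $w_i$ chosen so that $\gamma|_Q$ is the identity in coordinates (possible since $\gamma|_Q$ is generically \'etale). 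A direct matrix computation of $\phi_1$ in the bases $(dt/t,dz_1,\dots,dz_{n-1})$ and $(ds/s,dw_1,\dots,dw_{n-1})$ produces a matrix whose reduction modulo $s$ is lower triangular with diagonal $(m,1,\dots,1)$; hence $f \equiv m \pmod{s}$, which is non-zero in characteristic zero.

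\textbf{Main obstacle.} The only substantive step is this last local computation at the generic point of a boundary component: the right choice of $w_i$ using the generic \'etaleness of $\gamma|_Q$ is precisely what brings the matrix of $\phi_1$ into a form whose determinant modulo $s$ manifestly equals the ramification index $m$. Everything else is formal: reflexivity, generic \'etaleness of finite covers in characteristic zero, and the explicit SNC description of logarithmic differentials.
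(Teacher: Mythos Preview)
Your proof is correct and follows essentially the same approach as the paper: reduce to a big open where both sheaves are locally free in SNC coordinates, then verify the pull-back map there, invoking reflexivity to extend. The paper dispatches part (2) with the phrase ``a straightforward local computation''; your determinant argument and the explicit ramified-coordinate computation at a generic boundary point are exactly what that phrase is hiding, so your write-up is simply more detailed rather than genuinely different. One minor quibble: calling $\det\phi_1$ an \emph{endomorphism} of $\sO_{X_1}(K_{X_1}+D_1)$ is imprecise, since the source and target are only isomorphic, not literally equal; but this does not affect the argument, as the non-vanishing check at codimension-one points is local and independent of any chosen identification.
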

\begin{proof}
Let $D_1$ be a reduced effective divisor on $X_1$ such that $\gamma^{-1}(\textup{Supp}\,D)\subseteq \textup{Supp}\,D_1$.
Let $U \subseteq (X,D)_{\textup{reg}}$ be an open set with complement of codimension at least $2$ such that $\gamma^{-1}(U) \subseteq (X_1,D_1)_{\textup{reg}}$. By \cite[Lemma 0.2.13]{kmm} applied to the restriction of $\gamma$ to $\gamma^{-1}(U)$, the standard pull-back map of K\"ahler differentials induces an injective map of locally free sheaves
$$(\gamma_{|\gamma^{-1}(U)})^*\Omega_U^{p}(\textup{log}\, D_{|U}) \into \Omega_{\gamma^{-1}(U)}^{p}(\textup{log}\, {D_1}_{|\gamma^{-1}(U)}).$$ This easily implies that there is an injective morphism of reflexive sheaves
$$\gamma^{[*]}\Omega_X^{[p]}(\textup{log}\, D) \into \Omega_{X_1}^{[p]}(\textup{log}\, D_1).$$

Suppose now that $\gamma$ is quasi-\'etale over $X \setminus \textup{Supp}\,D$ and set $D_1:=\gamma^*(K_X+D)-K_{X_1}$. 
By Lemma \ref{lemma:klt_singularities_finite_cover}, $D_1$ is effective and reduced, and $\gamma^{-1}(\textup{Supp}\,D)\subseteq \textup{Supp}\,D_1$. A straightforward local computation then shows that the above map yields an isomorphism
$$\gamma^{[*]}\Omega_X^{[p]}(\textup{log}\, D) \cong \Omega_{X_1}^{[p]}(\textup{log}\, D_1),$$
finishing the proof of the lemma.
\end{proof}

\begin{lemma}\label{lemma:pull_back_fibration}
Let $f \colon X \to Y$ be surjective morphism of normal varieties, and let $D$ and $B$ be reduced effective divisors on $X$ and $Y$ respectively.
Suppose that $\textup{Supp}\,D$ contains all codimension $1$ irreducible components of $f^{-1}(\textup{Supp}\,B)$ which are not $f$-exceptional. Then the standard pull-back map of K\"ahler differentials induces an injective map of reflexive sheaves
$$\Omega_Y^{[p]}(\textup{log}\, B) \to \big(f_*\Omega_X^{[p]}(\textup{log}\, D)\big)^{**}$$
for any integer $1 \le p \le \dim X$.
\end{lemma}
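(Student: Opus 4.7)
The plan is to construct the map on a carefully chosen big open subset of $Y$ (one whose complement has codimension at least $2$) and then extend to all of $Y$ using the reflexivity of the target.

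First I would choose a Zariski open $V^\circ \subseteq Y$ with $\codim_Y(Y \setminus V^\circ) \ge 2$ such that $(V^\circ, B|_{V^\circ})$ is log smooth and $V^\circ$ avoids the images $f(W)$ of all codimension-$1$ irreducible components $W$ of $f^{-1}(\Supp B)$ that are $f$-exceptional. Such $V^\circ$ exists because each such $f(W)$ has codimension $\ge 2$ in $Y$ by Definition \ref{defn:exceptional}, and there are only finitely many such $W$. Set $U^\circ := f^{-1}(V^\circ) \cap (X,D)_{\textup{reg}}$, which is open in $X$ with complement of codimension $\ge 2$. Any codimension-$1$ irreducible component of $f^{-1}(\Supp B) \cap U^\circ$ is the restriction of a codimension-$1$ component of $f^{-1}(\Supp B)$ whose image meets $V^\circ$, hence is not $f$-exceptional and is therefore contained in $\Supp D$ by hypothesis.

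Second, on the morphism $f|_{U^\circ}\colon U^\circ \to V^\circ$ of log smooth pairs, an explicit local computation (writing local equations $f^*b_i = \prod x_j^{a_{ij}} u_i$ for the pulled-back branches of $B$ in terms of local equations $x_j$ of $D$ and units $u_i$) shows that the standard pull-back of K\"ahler differentials produces a morphism $f|_{U^\circ}^{*}\Omega^p_{V^\circ}(\log B|_{V^\circ}) \to \Omega^p_{U^\circ}(\log D|_{U^\circ})$, whose adjoint is a morphism $\Omega^p_{V^\circ}(\log B|_{V^\circ}) \to (f|_{U^\circ})_{*}\Omega^p_{U^\circ}(\log D|_{U^\circ})$. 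Since $\Omega^{[p]}_X(\log D)$ is reflexive and $U^\circ \cap f^{-1}V'$ has complement of codimension $\ge 2$ in $f^{-1}V'$ for every open $V' \subseteq V^\circ$, the target is naturally identified with $(f_{*}\Omega^{[p]}_X(\log D))|_{V^\circ}$. Composing with the canonical map to the double dual yields a morphism $\phi^\circ\colon \Omega^{[p]}_Y(\log B)|_{V^\circ} \to (f_{*}\Omega^{[p]}_X(\log D))^{**}|_{V^\circ}$, which extends uniquely to a morphism $\phi$ on $Y$ because $(f_{*}\Omega^{[p]}_X(\log D))^{**}$ is reflexive and $Y \setminus V^\circ$ has codimension $\ge 2$.

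Finally, injectivity of $\phi$ is checked at the generic point $\eta_Y$. There, the source is $\wedge^p \Omega_{k(Y)/\mathbb{C}}$, the target injects into $\wedge^p \Omega_{k(X)/\mathbb{C}}$, and $\phi$ is the pull-back along the extension $k(Y)\hookrightarrow k(X)$, which is separable since $\textup{char}=0$; the induced inclusion $\Omega_{k(Y)/\mathbb{C}}\otimes_{k(Y)} k(X) \hookrightarrow \Omega_{k(X)/\mathbb{C}}$ of $k(X)$-vector spaces remains injective after taking $p$-th exterior powers. Hence $\phi_{\eta_Y}$ is injective, so the kernel of $\phi$ is a torsion subsheaf of the reflexive (in particular torsion-free) sheaf $\Omega^{[p]}_Y(\log B)$, and therefore vanishes. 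The main obstacle is the codimension-$2$ bookkeeping: one must verify that the hypothesis on $f$-exceptional components is exactly what allows the pull-back to land in $\Omega^p_{U^\circ}(\log D|_{U^\circ})$ after passing to the big open $V^\circ$, and that the several reflexive-hull identifications are compatible.
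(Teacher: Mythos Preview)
Your proposal is correct and follows the same route as the paper: pass to a big open $V \subseteq Y$ avoiding the images of the finitely many $f$-exceptional divisorial components of $f^{-1}(\Supp\,B)$, use the functoriality of log differentials there (the paper cites \cite[Chapitre~2, Proposition~3.2]{deligne_equ_diff} via an auxiliary divisor $C$ recording the non-exceptional divisorial part of $f^{-1}(\Supp\,B)$, in place of your direct local computation), and extend by reflexivity. One harmless slip: $U^\circ$ need not have codimension-$2$ complement in $X$ itself, since $f^{-1}$ of a codimension-$2$ set can contain divisors; but you only use the relative statement that $U^\circ \cap f^{-1}V'$ is big in $f^{-1}V'$ for $V' \subseteq V^\circ$, which is correct.
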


\begin{proof}
Let $C$ be the reduced effective divisor on $X$ whose support is the union of the codimension $1$ irreducible components of $f^{-1}\big(\textup{Supp}\,B)$ which are not $f$-exceptional. Let $V\subseteq (Y,B)_{\textup{reg}}$ be the complement of the images of $f$-exceptional divisors contained in $f^{-1}\big(\textup{Supp}\,B)$, and set $U:=(X,C)_{\textup{reg}} \cap f^{-1}(V)$. By assumption, $\textup{Supp}\,C \subseteq \textup{Supp}\,D$. 
By \cite[Chapitre 2, Proposition 3.2]{deligne_equ_diff} applied to the restriction of $f$ to $U$, the standard pull-back map of K\"ahler differentials induces an injective map of sheaves
$$ \Omega_V^{p}(\textup{log}\, B_{|V}) \to (f_{|U})_*\Omega_U^{p}(\textup{log}\, C_{|U})$$
for any $1 \le p \le \dim Y$. On the other hand, since $D-C$ is effective by assumption, we have a natural map
$$(f_{|U})_*\Omega_U^{p}(\textup{log}\, C_{|U}) \to (f_{|U})_*\Omega_U^{p}(\textup{log}\, D_{|U})=(f_{|f^{-1}(V)})_*\Omega_{f^{-1}(V)}^{[p]}(\textup{log}\, D_{|f^{-1}(V)}).$$
This easily implies that there is a morphism
$$\Omega_Y^{[p]}(\textup{log}\, B) \to \big(f_*\Omega_X^{[p]}(\textup{log}\, D)\big)^{**},$$
finishing the proof of the lemma.
\end{proof}

\subsection{Resolution of singularities}

We will consider a suitable resolution of singularities of a given variety whose existence is guaranteed by the following theorem.

\begin{thm}[{\cite[Corollary 4.7]{greb_kebekus_kovacs10}}]\label{thm:canonical_resolution}
Let $X$ be a normal variety and let $D$ be a reduced effective divisor on $X$. Then there exists a log resolution
$\beta\colon Y\to X$ of $(X,D)$ such that $\beta_* T_Y(-\textup{log}\,E)\cong T_X(-\textup{log}\,D)$, 
where $E$ is the largest reduced divisor contained in $\beta^{-1}\big(\textup{Supp}\,D\big)$. 
\end{thm}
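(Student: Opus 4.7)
The plan, following \cite{greb_kebekus_kovacs10}, is to use a \emph{functorial} log resolution and to deduce the claim by interpreting logarithmic vector fields as infinitesimal automorphisms, which then lift through the resolution by functoriality.

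By the work of Hironaka, Bierstone--Milman, Villamayor, W\l odarczyk and Koll\'ar, one can arrange a log resolution $\beta\colon Y\to X$ of $(X,D)$ whose construction commutes with smooth base change and, more delicately, with nilpotent thickenings of the form $(-)\times_{\mathbb{C}}\Spec\mathbb{C}[\varepsilon]/(\varepsilon^2)$. In particular, every automorphism---and every infinitesimal automorphism---of $(X,D)$ lifts uniquely to one of $(Y,E)$. The natural map $\beta_*T_Y(-\textup{log}\,E)\to T_X(-\textup{log}\,D)$ is clearly injective: a section of the left-hand side is a $\mathbb{C}$-derivation of $\sO_Y$ preserving $\sO_Y(-E)$, and restricting it along $\beta_*\sO_Y=\sO_X$ yields a derivation of $\sO_X$, which preserves $\sO_X(-D)$ since this can be checked on the dense open set where $\beta$ is an isomorphism and $(X,D)$ is log smooth.

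Surjectivity is a local question on $X$, so fix $x\in X$ and a germ $v\in T_{X,x}(-\textup{log}\,D)$. The derivation $v$ corresponds to a $\mathbb{C}[\varepsilon]/(\varepsilon^2)$-automorphism $\sigma_v$ of $X[\varepsilon]:=X\times_{\mathbb{C}}\Spec\mathbb{C}[\varepsilon]/(\varepsilon^2)$ (on a neighborhood of $x$) which reduces to the identity modulo $\varepsilon$ and preserves $D[\varepsilon]$. Applying the functoriality of the resolution to the pair $(X[\varepsilon],D[\varepsilon])$---whose functorial log resolution is precisely $\beta[\varepsilon]\colon Y[\varepsilon]\to X[\varepsilon]$---one lifts $\sigma_v$ to a $\mathbb{C}[\varepsilon]/(\varepsilon^2)$-automorphism $\tilde\sigma_v$ of $Y[\varepsilon]$ (on a neighborhood of $\beta^{-1}(x)$) which reduces to the identity and preserves $E[\varepsilon]$. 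The associated derivation $\tilde v$ then preserves $\sO_Y(-E)$, and by construction satisfies $\beta_*\tilde v=v$.

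The main obstacle---and the substantive ingredient borrowed from the resolution literature---is the compatibility of the functorial log resolution with the (non-smooth) thickening $X\hookrightarrow X[\varepsilon]$. This is established by examining the resolution algorithm: the blow-up centers are determined by local invariants (Hilbert--Samuel function, order of contact, etc.) of the formal germs, and these invariants are insensitive to tensoring with $\mathbb{C}[\varepsilon]/(\varepsilon^2)$. Once this infinitesimal functoriality is granted, the lifting of $\sigma_v$, and hence the surjectivity of $\beta_*T_Y(-\textup{log}\,E)\to T_X(-\textup{log}\,D)$, follows formally.
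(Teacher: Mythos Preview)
The paper does not prove this statement; it is quoted verbatim from \cite[Corollary 4.7]{greb_kebekus_kovacs10} and used as a black box. So there is no ``paper's own proof'' to compare against---only the argument in the cited reference.

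Your strategy (functoriality of the resolution $\Rightarrow$ lifting of logarithmic vector fields, interpreted as infinitesimal automorphisms of the pair) is exactly the idea behind the proof in \cite{greb_kebekus_kovacs10}. The one technical point where your write-up diverges from theirs, and where it is not fully rigorous as stated, is the passage through the non-reduced scheme $X[\varepsilon]$. Functorial resolution in the sense of Bierstone--Milman, W\l odarczyk, Koll\'ar, etc.\ is formulated for \emph{reduced} schemes of finite type over a field, with functoriality for \emph{smooth} morphisms; the base change $\Spec\mathbb{C}[\varepsilon]/(\varepsilon^2)\to\Spec\mathbb{C}$ is neither smooth nor is $X[\varepsilon]$ reduced, so the sentence ``the functorial log resolution of $(X[\varepsilon],D[\varepsilon])$ is precisely $\beta[\varepsilon]$'' is not something one can invoke directly. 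Your final paragraph acknowledges this, but the justification (``invariants are insensitive to tensoring with $\mathbb{C}[\varepsilon]/(\varepsilon^2)$'') is a sketch rather than a proof, and making it precise amounts to checking that the derivation $v$ preserves the ideal of each successive blow-up center---which is not automatic from $v$ preserving $\sO_X(-D)$ alone.

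The argument in \cite{greb_kebekus_kovacs10} avoids this issue entirely by staying in the reduced world: given a local section $v\in T_X(-\log D)$, integrate it to a local analytic one-parameter flow $\phi_t$ preserving $D$; functoriality with respect to (local analytic) isomorphisms---which \emph{is} part of the standard package---lifts each $\phi_t$ to an automorphism $\tilde\phi_t$ of $(Y,E)$; now differentiate at $t=0$ to obtain $\tilde v$. This uses only functoriality for smooth morphisms and open embeddings, and the existence of complex flows on analytic spaces. If you want to keep your infinitesimal formulation, the cleanest fix is to observe that the local flow argument shows a posteriori that $v$ is tangent to every blow-up center, which is what your $X[\varepsilon]$ computation needs.
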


We call a resolution $\beta$ as in Theorem \ref{thm:canonical_resolution} a \textit{canonical resolution} of the pair $(X,D)$.
In the course of the proof of our main result, we will need the following observation (see \cite[Corollary 3.5]{bergner_log_ZL} for a somewhat related result).

\begin{lemma}\label{lemma:canonical_resolution}
Let $(X,D)$ be a log canonical pair with $D$ effective and reduced. Let $\beta\colon Y \to X$ be a canonical resolution of $(X,D)$ and let $E$ be the largest reduced divisor contained in $\beta^{-1}\big(\textup{Supp}\,D\big)$.
Suppose that $\Omega_X^{[1]}(\textup{log}\,D)$ is locally free. Then 
$\beta^* \Omega_X^{[1]}(\textup{log}\,D)\cong \Omega_Y^{1}(\textup{log}\,E)$. In particular, $\Omega_Y^{1}(\textup{log}\,E)$ is locally free. 
\end{lemma}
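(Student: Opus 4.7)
My plan is to produce a natural candidate morphism from $\beta^*\Omega_X^{[1]}(\log\,D)$ to $\Omega_Y^1(\log\,E)$ using Theorem~\ref{thm:canonical_resolution} and duality, verify it is an isomorphism on a dense open subset, and upgrade this to a global isomorphism by analysing the determinant section.

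The hypothesis that $\Omega_X^{[1]}(\log\,D)$ is locally free implies that $T_X(-\log\,D)$ is locally free of rank $n := \dim X$ and that $K_X + D$ is Cartier. By Theorem~\ref{thm:canonical_resolution} there is a canonical isomorphism $\beta_* T_Y(-\log\,E) \cong T_X(-\log\,D)$, which via $(\beta^*, \beta_*)$-adjunction corresponds to a natural morphism $\psi\colon \beta^* T_X(-\log\,D) \to T_Y(-\log\,E)$ between locally free sheaves of rank $n$. Dualizing $\psi$ produces the candidate
\[ \phi \colon \beta^* \Omega_X^{[1]}(\log\,D) \lra \Omega_Y^1(\log\,E). \]

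Since $X$ is normal and $(X,D)$ is log canonical, the pair is log smooth at every codimension-one point of $X$, so $U := (X,D)_{\textup{reg}}$ is a big open subset of $X$. The canonical resolution $\beta$ restricts to an isomorphism $V := \beta^{-1}(U) \to U$ matching $E_{|V}$ with $D_{|U}$, and under this identification $\phi_{|V}$ is the identity. Thus $\phi$ is an isomorphism on the dense open $V \subseteq Y$, and $\det\phi$ is a nonzero global section of the line bundle $\sO_Y(K_Y + E - \beta^*(K_X + D))$ whose zero divisor $Z$ is effective and supported on the divisorial $\beta$-exceptional locus $Y \setminus V$.

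It remains to show $Z = 0$. Using the discrepancy formula for $(X,D)$ and the fact that $K_X + D$ is Cartier (so the discrepancies $a_i \ge -1$ are integers), the class of $Z$ rewrites as the exceptional $\mathbb{Z}$-combination
\[ [Z] \;=\; \sum_{E_i \subseteq \beta^{-1}(\textup{Supp}\,D)} (a_i+1)\,E_i \;+\; \sum_{E_j \not\subseteq \beta^{-1}(\textup{Supp}\,D)} a_j\,E_j, \]
with the two sums running over $\beta$-exceptional primes. The $\mathbb{Z}$-linear independence of exceptional primes (from the negativity of their intersection form over contracted fibres) then forces $Z$ to coincide with this combination as a divisor, not merely as a class. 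The hard part will be to conclude that every coefficient vanishes, because the log canonical hypothesis only yields $a_i \ge -1$: this is where one combines the defining property $\beta_* T_Y(-\log\,E) \cong T_X(-\log\,D)$ of the canonical resolution with the local freeness of $\Omega_X^{[1]}(\log\,D)$ to pin down the sharp values $a_i = -1$ for $E_i \subseteq \beta^{-1}(\textup{Supp}\,D)$ and $a_j = 0$ otherwise. Once this is achieved, $\det\phi$ is nowhere vanishing, $\phi$ is an isomorphism, and the ``in particular'' assertion is immediate from $Y$ being smooth with $E$ simple normal crossing.
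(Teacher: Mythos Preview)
Your strategy coincides with the paper's: produce a natural map between the two locally free sheaves of rank $n$ and show that its determinant is nowhere vanishing. There is one technical slip and one genuine gap.

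The slip: dualizing $\psi\colon \beta^*T_X(-\log D)\to T_Y(-\log E)$ yields a map $\Omega_Y^1(\log E)\to\beta^*\Omega_X^{[1]}(\log D)$, the \emph{opposite} direction from the $\phi$ you write down. The paper sidesteps this entirely by working with $\psi$ and $\det\psi$ throughout; there is no need to pass to cotangent sheaves.

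The gap: you announce the vanishing of $Z$ as ``the hard part'' and then do not prove it, merely asserting that ``one combines'' the canonical-resolution property with local freeness. In the paper this is not a hard part at all but a one-line step. With the correct map $\det\psi\colon\beta^*\sO_X(-K_X-D)\to\sO_Y(-K_Y-E)$, the zero divisor of $\det\psi$ is effective and $\beta$-exceptional; since the specific divisor $\beta^*(K_X+D)-K_Y-E$ is also $\beta$-exceptional and in the same linear equivalence class, the two agree as divisors, giving $\beta^*(K_X+D)-K_Y-E\ge 0$. The log canonical hypothesis is then invoked for the reverse inequality, forcing the divisor to vanish and $\det\psi$ (hence $\psi$) to be an isomorphism. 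Note that in your sign convention---coming from the wrong-direction $\phi$---the effectivity of $Z$ and the inequality $a_i\ge -1$ from log canonicity point the \emph{same} way, which is exactly why the combination looked intractable to you: you were trying to squeeze two lower bounds into an equality. Had you kept the map $\psi$ as in the paper, effectivity and log canonicity pull in opposite directions and close up immediately, at least for exceptional primes lying over $\textup{Supp}\,D$; for any exceptional $E_j$ with $\beta(E_j)\not\subset D$ one observes that $\Omega_X^{[1]}$ is locally free there and appeals to the Zariski--Lipman theorem for log canonical spaces to see that $X$ is smooth near $\beta(E_j)$, forcing $a_j>0$ in contradiction with $a_j\le 0$. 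Either way, the ``hard part'' dissolves once the map is oriented correctly.
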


\begin{proof}
The morphism of locally free sheaves
$$T\colon \beta^*T_X(\textup{log}\,D) \cong \beta^*\big(\beta_*T_Y(\textup{log}\,E)\big) \to T_Y(\textup{log}\,E)$$
yields a map
$$\det T\colon \beta^*\sO_{X}(-K_X-D) \cong \beta^* \det T_X(\textup{log}\,D) \to \det T_Y(\textup{log}\,E)\cong \sO_Y(-K_Y-E),$$
which is an isomorphism over $Y \setminus \beta(\textup{Exc}\,\beta)$. Since $(X,D)$ is log canonical, $\det T$ must be an isomorphism. This immediately implies that $T$ is an isomorphism as well, proving the lemma.
\end{proof}

\subsection{R-flat vector bundles}\label{R_flat_vector_bundles} For the reader's convenience, we recall the notion of \textit{numerical flatness} for vector
bundles.

\begin{defn}
\label{defn: numerically flat}
Let $f \colon X \to Y$ be a projective morphism of quasi-projective varieties. A locally free sheaf $\sE$ on $X$ of positive rank is called
$f$-numerically flat if $\sE$ and $\sE^*$ are $f$-nef. If $Y$ is a point, we simply say that $\sE$ is numerically flat.
\end{defn}

In the course of the proof of \cite[Theorem 1.1]{biswas_vb_rc} the authors show that a locally free sheaf $\sE$ of positive rank $r$ on a smooth projective rationally connected variety $X$ such that $\nu^*\sE \cong \sO_{\mathbb{P}^1}^{\oplus r}$ for any morphism $\nu\colon\mathbb{P}^1 \to X$
is numerically flat. Definition \ref{def:R_flat} is a formalization of the above condition.

\begin{defn}\label{def:R_flat}
Let $f \colon X \to Y$ be a projective morphism of quasi-projective varieties. A locally free sheaf $\sE$ on $X$ of positive rank $r$ is called relatively R-flat or $f$-relatively R-flat if $\nu^*\sE \cong \sO_{\mathbb{P}^1}^{\oplus r}$ for any morphism $\nu\colon\mathbb{P}^1 \to X$ such that $\nu(\mathbb{P}^1)$ is contracted by $f$. If $Y$ is a point, we simply say that $\sE$ is R-flat. 
\end{defn}

The following result partly extends \cite[Theorem 1.1]{biswas_vb_rc} to our setting.

\begin{lemma}\label{lemma:R_flat_numerically_flat}
Let $X$ be a projective reduced space, not necessarily irreducible. Suppose that $X$ is rationally chain connected.
Then any locally free, R-flat sheaf on $X$ is numerically flat.
\end{lemma}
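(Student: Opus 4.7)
To show that $\sE$ is numerically flat, I need both $\sE$ and $\sE^*$ to be nef. Since nefness of a vector bundle on a projective variety can be tested by pulling back to smooth projective curves, and since every irreducible curve on $X$ lies in a unique irreducible component, it suffices to prove that $\sE|_{X_0}$ is numerically flat for each irreducible component $X_0 \subseteq X$.

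Given an irreducible component $X_0$, I would pull back along a desingularization $\pi \colon \widetilde{X}_0 \to X_0$. The pull-back $\pi^*(\sE|_{X_0})$ is locally free and still R-flat, since any morphism $\mathbb{P}^1 \to \widetilde{X}_0$ composes with $\pi$ to a morphism $\mathbb{P}^1 \to X_0 \subseteq X$. Because $\pi$ is surjective, nefness of $\pi^*(\sE|_{X_0})$ descends to nefness of $\sE|_{X_0}$ (and dually for $\sE^*$), so it suffices to prove that $\pi^*(\sE|_{X_0})$ is numerically flat on $\widetilde{X}_0$. The smooth projective variety $\widetilde{X}_0$ should inherit rational chain connectedness from $X$, and then be rationally connected by the Koll\'ar--Miyaoka--Mori theorem. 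The numerical flatness conclusion then follows from \cite[Theorem 1.1]{biswas_vb_rc}, which is exactly the statement that R-flat sheaves on smooth projective rationally connected varieties are numerically flat.

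The main obstacle I would expect is justifying that each irreducible component $X_0$ inherits rational chain connectedness from $X$. The RCC hypothesis on $X$ does imply uniruledness of each component (the generic point of $X_0$ lies on a rational curve in some connecting chain, which by dimension must lie in $X_0$), but RCC is a strictly stronger property. If this component-wise reduction fails in general, I would instead work globally: construct a surjective morphism $Z \to X$ from a disjoint union of smooth projective rationally connected varieties using the family of rational chains dominating $X \times X$ provided by the RCC hypothesis, pull back $\sE$ to $Z$, apply \cite[Theorem 1.1]{biswas_vb_rc} component by component on $Z$, and descend numerical flatness along the surjection $Z \to X$. This global approach has the advantage of using the full RCC structure of $X$ rather than just local uniruledness of its components.
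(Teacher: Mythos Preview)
Your first approach has a real obstruction, and your proposed fallback does not avoid it. The cone $C(E)$ over an elliptic curve $E$ is rationally chain connected (any two points are joined by at most two rulings through the vertex), yet its resolution is a $\mathbb{P}^1$-bundle over $E$, which is not rationally chain connected. So the reduction to smooth components followed by \cite[Theorem 1.1]{biswas_vb_rc} breaks down, as you suspected.

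The same example kills the fallback. You propose to find a surjection $Z \to X$ with $Z$ a disjoint union of smooth projective rationally connected varieties. For $X = C(E)$ no such $Z$ exists: composing any surjection $Z \to C(E)$ with the projection from the vertex produces a dominant rational map $Z \dashrightarrow E$, but every rational map from a rationally connected variety to an abelian variety is constant. So there is no way to package the rational chains of $X$ into rationally connected test varieties, and one cannot reduce the lemma to Biswas--dos Santos globally either.

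The paper proceeds curve by curve instead. Given an arbitrary curve $C \subset X$ and a general point $x$, rational chain connectedness yields, for each point of $C$, a chain of rational curves from $x$ to that point; the paper organizes these into finitely many normal surfaces $S_i$ fibred over smooth curves $B_i$ with rational-tree fibres (after semistable reduction), together with sections linking $x$, the successive stages, and finally $C$. On each $S_i$, R-flatness forces $e_i^*\sE$ to be pulled back from $B_i$; since the chain starts at the single point $x$, the first pull-back is numerically flat, and one propagates this along the sections to conclude that $\sE_{|C}$ is numerically flat. The point is that one never needs a rationally connected variety dominating $X$: the rational chains themselves, suitably parametrized, carry the argument.
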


\begin{proof} Let $\sE$ be a locally free, R-flat sheaf on $X$. Let $C$ be a curve on $X$ and let
$x \in X$ be a general point. Since $X$ is rationally chain connected,
there exist finitely many normal projective surfaces $S_i$ ($1\le i \le N$) as well as surjective morphisms 
$\pi_i \colon S_i \to B_i$ onto smooth complete curves such that the following holds. The general fibers of $\pi_i$ are rational curves. Moreover, there exist sections $\sigma_{i,1}\subset S_i$ and $\sigma_{i,2}\subset S_i$ of $\pi_i$ and morphisms $e_i \colon S_i \to X$ such that 
$e_1(\sigma_{1,1})=\{x\}$, $e_i(\sigma_{i,2})=e_{i+1}(\sigma_{i+1,1})$ for $1\le i \le N-1$
and $e_N(\sigma_{N,2})=C$. By the semistable reduction theorem, we may assume without loss of generality that $\pi_i$ is semistable.
In particular, any fiber of $\pi_i$ is a rational tree. Notice also that $\pi_i$ is flat. Since $\sE$ is R-flat, we must have $e_i^*\sE \cong \pi_i^*\sG_i$ for some locally free sheaf $\sG_i$ on $B_i$. Now, ${e_1^*\sE}_{|\sigma_{1,1}}$ is trivial since $e_1(\sigma_{1,1})=\{x\}$. It follows that 
$\sG_1$ is numerically flat and hence so is $e_1^*\sE$. This in turn implies that ${\sE}_{|e_1(\sigma_{1,2})}=\sE_{|e_2(\sigma_{2,1})}$ is numerically flat as well if $\dim e_1(\sigma_{1,2}) = 1$. If $\dim e_1(\sigma_{1,2}) = 0$, then ${\sE}_{|e_1(\sigma_{1,2})}=\sE_{|e_2(\sigma_{2,1})}$ is the trivial vector bundle.
An induction on $i$ then shows that $\sE_{|C}$ is numerically flat, finishing the proof of the lemma.
\end{proof}

The following lemma will be very useful.

\begin{lemma}\label{lemma:R_flat_functoriality}
Let $f \colon X \to Y$ be a projective morphism of quasi-projective varieties, and let $\sE$ be a locally free sheaf on $Y$. Suppose that $X$ is of klt type and that $-K_X$ is $f$-ample. If $f^*\sE$ is R-flat, then so is $\sE$.
\end{lemma}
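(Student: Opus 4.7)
The plan is to prove R-flatness of $\sE$ by reducing it to the R-flatness of $f^*\sE$: for every morphism $\mu \colon \mathbb{P}^1 \to Y$, I will produce a morphism $\tilde\mu\colon \mathbb{P}^1 \to X$ with $f \circ \tilde\mu = \mu$, so that $\mu^*\sE \cong \tilde\mu^*(f^*\sE) \cong \sO_{\mathbb{P}^1}^{\oplus r}$ automatically. Constant morphisms $\mu$ are handled trivially (the pull-back is already free), so I may assume $\mu$ is non-constant and, without loss of generality, that $\mu(\mathbb{P}^1) \subseteq f(X)$.

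To build the lift, I would form the base change
$$g\colon X_{\mathbb{P}^1} := X \times_Y \mathbb{P}^1 \to \mathbb{P}^1,$$
replace $X_{\mathbb{P}^1}$ by an irreducible component dominating $\mathbb{P}^1$, take a resolution of singularities $\pi\colon Z \to X_{\mathbb{P}^1}$, and consider the composition $h := g \circ \pi\colon Z \to \mathbb{P}^1$. Denoting by $p\colon X_{\mathbb{P}^1} \to X$ the other projection, any section $\sigma\colon \mathbb{P}^1 \to Z$ of $h$ yields the desired lift $\tilde\mu := p \circ \pi \circ \sigma$ by the universal property of the fiber product. So the question becomes the existence of a section of $h$, for which I plan to invoke the theorem of Graber--Harris--Starr: it suffices that the geometric generic fiber of $h$ be rationally connected.

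Verifying this rational connectedness is the genuine obstacle, and the hypotheses are arranged to deliver exactly this. Since $X$ is of klt type, there exists $\Delta \ge 0$ with $(X,\Delta)$ klt; for $\epsilon \in \mathbb{Q}_{>0}$ small enough, $(X,\epsilon\Delta)$ is still klt while $-(K_X+\epsilon\Delta)$ is still $f$-ample, so $f$ is a Fano-type contraction. By the theorem of Hacon--McKernan on rational connectedness of Fano-type fibrations, every fiber of $f$, and hence every fiber of $g$, is rationally chain connected. A general fiber of $h$ is a smooth projective birational modification of a fiber of $g$, so it is rationally chain connected and therefore rationally connected in characteristic zero; standard semicontinuity of rational connectedness in smooth proper families then propagates this property to the geometric generic fiber of $h$. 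Graber--Harris--Starr now supplies the required section, and the argument is complete.
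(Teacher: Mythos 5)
Your overall strategy---lifting every $\mu\colon\mathbb{P}^1\to Y$ to $X$ by base-changing, resolving, and invoking Graber--Harris--Starr---is the right idea, and it is essentially how the statement you are implicitly reproving is established in the literature. The paper itself does none of this work: its proof is the single observation that the lemma follows at once from \cite[Corollary 1.10]{hacon_mckernan}, which is precisely the assertion that a rational curve contained in $f(X)$ lifts to a rational curve in $X$. (From that weaker lifting statement one still concludes: if $\nu'\colon\mathbb{P}^1\to X$ parametrizes the lifted curve, then $f\circ\nu'$ factors through the normalization $n\colon\mathbb{P}^1\to\mu(\mathbb{P}^1)$ via a finite map $q$, and a locally free sheaf on $\mathbb{P}^1$ whose pull-back under a finite $q$ is trivial is itself trivial; hence $n^*\sE$, and then $\mu^*\sE$, is trivial.)

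The genuine gap in your write-up is the step asserting that a general fiber of $h$ is rationally chain connected because it is ``a smooth projective birational modification of a fiber of $g$.'' Two things go wrong there. First, after replacing $X_{\mathbb{P}^1}$ by a single irreducible component dominating $\mathbb{P}^1$, its fiber over a general $t$ may be a proper union of components of $f^{-1}(\mu(t))$, and rational chain connectedness of $f^{-1}(\mu(t))$ says nothing about such a piece. Second, and more seriously, rational chain connectedness is not a birational invariant: the projective cone over an elliptic curve is rationally chain connected (any two points are joined by a chain of rulings through the vertex), while its resolution is a $\mathbb{P}^1$-bundle over the elliptic curve and is not. So you cannot pass from RCC of a fiber of $g$ to RCC (hence RC) of the corresponding smooth fiber of $h$ by a formal argument; note also that $\mu(\mathbb{P}^1)$ may sit inside the locus of very singular fibers, so generic smoothness and ``RCC equals RC for klt pairs'' are not available for the fibers themselves. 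This is exactly the difficulty that Hacon and McKernan's main theorem is formulated to overcome---their Theorem 1.2 controls the fibers of $\pi\circ g$ for an \emph{arbitrary} birational morphism $g$ onto the total space, not merely the fibers of $\pi$---and it is why the paper cites \cite[Corollary 1.10]{hacon_mckernan} wholesale rather than rederiving it. A smaller point: ``without loss of generality $\mu(\mathbb{P}^1)\subseteq f(X)$'' is not a reduction but an extra hypothesis; the lemma is only ever applied to surjective $f$, and the cited corollary likewise assumes the curve lies in the image, but as written your argument (and the lemma) silently needs this.
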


\begin{proof}
This is an immediate consequence of \cite[Corollary 1.10]{hacon_mckernan}.
\end{proof}

\subsection{Discriminant}
Let $f \colon X \to Y$ be a projective morphism with connected fibers of normal, quasi-projective varieties. Let $D$ be a $\mathbb{Q}$-divisor on $X$ such that $(X,D)$ is log canonical over the generic point of $Y$. The \textit{discriminant divisor} of $(f,D)$
is the $\mathbb{Q}$-divisor $B=\sum_P b_P P$ on $Y$, where $P$ runs through all prime divisor on $Y$, and 
$$1-b_P:=\sup\left\{ t\in \mathbb{R}\,| \,(X,D+tf^*P)\textup{ is log canonical over the generic point of }P\right\}.$$   
The discriminant divisor measures the singularities of special fibers. For the reader's convenience, we recall three standard facts.

\begin{fact}
If $C$ is a $\mathbb{Q}$-divisor on $Y$, then the discriminant divisor of $(f,D+f^*C)$ is $B+C$.
\end{fact}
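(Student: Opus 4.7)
The plan is to verify the claimed equality of $\bQ$-divisors coefficient by coefficient. I fix a prime divisor $P$ on $Y$, write $b_P$ and $b'_P$ for the coefficients of $P$ in the discriminants of $(f,D)$ and $(f, D+f^*C)$ respectively, and set $c_P := \mult_P C$. The goal then reduces to $b'_P = b_P + c_P$.

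The first step is to decompose $C = c_P P + C'$, where $C'$ has no component equal to $P$, so that $f^*C = c_P f^*P + f^*C'$. The key observation is that no prime divisor $Q$ in the support of $f^*C'$ satisfies $f(Q) = P$: by construction $f(Q) \subseteq \Supp C'$, which does not contain the generic point $\eta_P$ of $P$. Consequently, after restricting to $X \times_Y \Spec\,\sO_{Y,\eta_P}$---which is all that is seen by the notion of ``log canonical over the generic point of $P$''---the divisor $f^*C'$ disappears.

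With that in hand, for any real $t$ the pair $(X, D + f^*C + t f^*P)$ is log canonical over $\eta_P$ if and only if $(X, D + (c_P + t) f^*P)$ is. Substituting $s = t + c_P$ in the supremum defining $1 - b'_P$ yields $1 - b'_P = (1 - b_P) - c_P$, i.e. $b'_P = b_P + c_P$, which is the desired equality.

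The whole argument amounts to unwinding the definition of the discriminant; the only point requiring any care is the assertion that log canonicity over $\eta_P$ is insensitive to summands supported on prime divisors of $X$ that do not dominate $P$. This holds because the divisorial valuations tested by the log-canonical condition over $\eta_P$ are precisely those with center meeting the generic fiber $X_{\eta_P}$, where such summands contribute trivially. I do not anticipate any genuine obstacle beyond this bookkeeping.
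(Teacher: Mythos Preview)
Your argument is correct: decomposing $C=c_P P+C'$ and observing that $f^*C'$ vanishes after localizing at $\eta_P$ (since $\Supp f^*C'\subseteq f^{-1}(\Supp C')$ and $\eta_P\notin\Supp C'$) reduces the computation of $1-b'_P$ to a shift of the supremum defining $1-b_P$ by $c_P$. The paper does not supply a proof of this fact at all---it is recorded as a standard observation following immediately from the definition of the discriminant---so your direct coefficient-by-coefficient verification is exactly the intended elementary check.
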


\begin{fact}[{\cite[Lemma 2.6]{ambro_jdg}}]\label{fact:discriminant_crepant} 
Suppose that $K_X+D$ is $\mathbb{Q}$-Cartier.
Let $\beta\colon Z \to X$ be a projective birational morphism with $Z$ normal. Set $g:=f\circ\beta$ and
$C:=\beta^*(K_X+D)-K_Z$. Then the discriminant divisor of $(g,C)$ is $B$.
\end{fact}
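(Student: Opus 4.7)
The plan is to verify, prime by prime on $Y$, that the coefficient of each $P$ in the discriminant divisor of $(g,C)$ equals $b_P$; the argument is essentially an unpacking of the definitions using invariance of log discrepancies under crepant modifications. First I would observe that the defining equality $C = \beta^*(K_X+D) - K_Z$ rewrites as $\beta^*(K_X+D) = K_Z + C$, exhibiting $\beta$ as a crepant birational modification of $(X,D)$. Since $f^*P$ is pulled back from $Y$ and $g = f\circ\beta$, we have $\beta^*(f^*P) = g^*P$ for every prime divisor $P \subset Y$, hence
\[
\beta^*(K_X + D + tf^*P) \;=\; K_Z + C + tg^*P
\]
for every $t \in \mathbb{R}$; in other words $\beta$ remains crepant after adding the perturbation.

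Next I would compare log discrepancies on a common log resolution. Fix $P$ and $t$, and choose a proper birational morphism $\nu\colon W \to Z$ such that $\mu := \beta\circ\nu\colon W \to X$ is a log resolution of $(X, D + tf^*P)$ while simultaneously $\nu$ is a log resolution of $(Z, C + tg^*P)$; such a $W$ exists by resolution of singularities. The displayed equality yields $\mu^*(K_X + D + tf^*P) = \nu^*(K_Z + C + tg^*P)$, so reading off the coefficient of any prime divisor $E \subset W$ in the difference $K_W - \mu^*(K_X + D + tf^*P) = K_W - \nu^*(K_Z + C + tg^*P)$ gives
\[
a(E;\,X,\, D + tf^*P) \;=\; a(E;\,Z,\, C + tg^*P).
\]
This identification takes care of all prime divisors of $W$ uniformly, regardless of whether they are exceptional or birational transforms for $\mu$, for $\nu$, or for both.

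Finally I would translate this into the desired equality $b_P(g,C) = b_P$. The condition that $(X, D + tf^*P)$ be log canonical over the generic point of $P$ is equivalent to the inequality $a(E;\,X,\, D + tf^*P) \ge -1$ holding for every divisorial valuation $E$ of the function field with center $c_Y(E) \in \{Y, P\}$, and the analogous statement holds on $Z$. Since $c_Y(E)$ depends only on the valuation and not on whether we realize $E$ over $X$ or over $Z$, and since every such $E$ can be realized on some $W$ as above, the equality of discrepancies from the previous step shows that the two log canonicity conditions cut out the same set of $t \in \mathbb{R}$. Taking suprema yields $b_P(g,C) = b_P$, and as $P$ was arbitrary this proves the fact. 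The one mildly delicate point, hence the natural place where the argument could fail to be as clean as stated, is guaranteeing that every relevant valuation $E$ is captured by a single common resolution $W$; this is standard, since it suffices to replace $W$ by a further blow-up dominating the given $E$.
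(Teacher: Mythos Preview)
The paper does not give its own proof of this statement: it is recorded as a Fact with an external citation to \cite[Lemma 2.6]{ambro_jdg}. Your argument is correct and is essentially the standard one underlying that reference: the identity $\beta^*(K_X+D+tf^*P)=K_Z+C+tg^*P$ shows that $\beta$ is crepant for the perturbed pairs, so discrepancies of all divisorial valuations agree on both sides, and hence the log canonical thresholds over the generic point of each prime $P\subset Y$ coincide. One cosmetic point: what you denote $c_Y(E)$ is really $f(c_X(E))=g(c_Z(E))$, the image in $Y$ of the center of the valuation; since $f$ and $g$ are proper this image is closed and irreducible, and containing the generic point of $P$ forces it to equal $P$ or $Y$, so your description of the relevant valuations is correct once the notation is read this way.
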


\begin{fact}[{\cite[Lemma 5.1]{ambro_jdg}}]\label{fact:discriminant_finite_base_change} 
Let $\gamma\colon Y_1 \to Y$ be a finite morphism with $Y_1$ normal, and let $X_1$ be the normalization of the product $Y_1 \times_X X_1$ with natural morphisms $f_1\colon X_1 \to Y_1$ and $\gamma_1 \colon X_1 \to X$. Set 
$D_1:=\gamma_1^*(K_X+D)-K_{X_1}$. Let $B_1$ be the discriminant divisor of $(f_1,D_1)$. Then $K_{X_1}+B_1\sim_\mathbb{Z}\gamma_1^*(K_X+B)$.
\end{fact}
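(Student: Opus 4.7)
The statement is codimension-one on $Y_1$, so my plan is to fix a prime divisor $P_1\subset Y_1$ with image $P:=\gamma(P_1)\subset Y$ and ramification index $e$, and compute $b_{P_1}$ in terms of $b_P$. After shrinking $Y$ to a smooth curve germ at the generic point of $P$ and replacing $X$ by a canonical log resolution of $(X,D)$ (this is harmless by Fact~\ref{fact:discriminant_crepant}), I may assume $(X,D\cup\Supp(f^*P))$ is log smooth along every component of $f^{-1}(P)$. I then write $f^*P=\sum_i m_i E_i$, and denote by $E_{1,ij}$ the prime divisors of $X_1$ lying over $E_i$, with ramification index $r_{ij}$ along $\gamma_1$ and multiplicity $m_{ij}$ in $f_1^*P_1$.

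The compatibility $\gamma_1^*f^*P=f_1^*\gamma^*P=e\,f_1^*P_1$ (combined with $\gamma_1^*E_i=\sum_j r_{ij}E_{1,ij}$ in codimension one) immediately gives the multiplicity identity $m_i r_{ij}=e\,m_{ij}$. Moreover, because $\gamma_1$ is étale in codimension one away from the $E_i$'s (this is where the generic log-smoothness is used), the ramification formula applied to $K_{X_1}+D_1=\gamma_1^*(K_X+D)$ yields
\[
\mult_{E_{1,ij}}D_1 \;=\; r_{ij}\,\mult_{E_i}D \;-\;(r_{ij}-1).
\]
Combined with the log-smooth formula $1-b_P=\min_i (1-\mult_{E_i}D)/m_i$, a direct substitution produces
\[
\frac{1-\mult_{E_{1,ij}}D_1}{m_{ij}}
\;=\;\frac{r_{ij}\bigl(1-\mult_{E_i}D\bigr)}{m_ir_{ij}/e}
\;=\;\frac{e\bigl(1-\mult_{E_i}D\bigr)}{m_i},
\]
so $1-b_{P_1}=e(1-b_P)$, i.e.\ $b_{P_1}=1-e+e\,b_P$.

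Summing over all primes $P_1$ of $Y_1$ I obtain, near $P_1$, the equality $B_1=\gamma^*B-(e-1)P_1$; since the Hurwitz formula gives $K_{Y_1}=\gamma^*K_Y+\sum(e-1)P_1$ in codimension one, the two corrections cancel and $K_{Y_1}+B_1\sim_{\mathbb{Z}}\gamma^*(K_Y+B)$ follows. The only real obstacle is the reduction step at the beginning: one must check that, after resolving $(X,D)$ and normalising the base change, the local structure of $\gamma_1\colon X_1\to X$ along the strict transforms of the $E_i$ really is governed by the ramification indices $r_{ij}$, with no extra contribution from codimension-one components of the branch locus of $\gamma_1$ lying outside $\bigcup E_{1,ij}$; this is automatic once $Y$ is a smooth curve and $\gamma$ is totally ramified over $P$, which is the case I reduce to from the start.
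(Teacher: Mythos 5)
The paper offers no proof of this Fact: it is quoted verbatim from Ambro \cite[Lemma 5.1]{ambro_jdg}, so there is no internal argument to compare yours against. What you have written is essentially the standard proof of the base-change property of the discriminant, and its skeleton is sound: localize $Y$ at the generic point of $P$, pass to a log resolution of $(X,D)$ (harmless by Fact~\ref{fact:discriminant_crepant}, applied both downstairs and to the induced birational morphism between the two normalized base changes), extract the two multiplicity identities $m_ir_{ij}=e\,m_{ij}$ and $\mult_{E_{1,ij}}D_1=r_{ij}\mult_{E_i}D-(r_{ij}-1)$, and observe that the resulting relation $b_{P_1}=1-e(1-b_P)$ makes $B_1-\gamma^*B$ cancel exactly against the ramification divisor in the Hurwitz formula for $\gamma$.

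The one step that needs repair is the deduction ``so $1-b_{P_1}=e(1-b_P)$''. You compute $(1-\mult_{E_{1,ij}}D_1)/m_{ij}$ correctly, but to conclude you are implicitly using the formula $1-b_{P_1}=\min_{ij}(1-\mult_{E_{1,ij}}D_1)/m_{ij}$ on $X_1$, and $X_1$ is a normalized base change which is in general \emph{not} log smooth (it acquires quotient-type singularities along intersections of the $E_i$), so the ``log-smooth formula'' does not apply there as stated. The fix is standard and uses tools the paper already has: the inequality $1-b_{P_1}\le\min_{ij}(\cdots)$ follows because an lc pair on a normal variety has all divisor coefficients $\le 1$; the reverse inequality $1-b_{P_1}\ge e(1-b_P)$ follows from Fact~\ref{fact:quasi_etale_cover_and_singularities} applied to the crepant relation $K_{X_1}+D_1+t\,\gamma_1^*f^*P=\gamma_1^*(K_X+D+tf^*P)$ together with $\gamma_1^*f^*P=f_1^*\gamma^*P=e\,f_1^*P_1$ near the generic fibre over $P_1$. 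Two smaller remarks: your closing worry about extra branch components of $\gamma_1$ is unfounded for a simpler reason than the one you give --- the multiplicity of $D_1$ along $E_{1,ij}$ is computed at the generic point of that divisor, where only $r_{ij}$ matters, and every divisor of $X_1$ dominating $P_1$ lies over some $E_i$ by finiteness of $\gamma_1$; and you never actually arrange for $\gamma$ to be totally ramified over $P$ (nor do you need to), so that sentence should be dropped.
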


We will also need the following observation.

\begin{lemma}\label{lemma:cbf_2}
Let $X$ be a normal projective variety, and let $D$ be an effective $\mathbb{Q}$-divisor on $X$. Suppose that $(X,D)$ has klt singularities. Let $\psi \colon X \to Y$ be a projective morphism with connected fibers onto a normal, projective variety $Y$, and let $B$ the discriminant divisor of $(\psi,D)$. Suppose that $-(K_X+D)$ is $\psi$-ample.
If $K_Y+B$ is $\mathbb{Q}$-Cartier, then $(Y,B)$ has klt singularities.
\end{lemma}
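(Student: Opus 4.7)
The plan is to perturb $D$ using the $\psi$-ampleness of $-(K_X+D)$ so as to reduce to the case of a klt pair with relatively trivial canonical class, and then to invoke Ambro's canonical bundle formula in order to transfer the klt property from $X$ to $Y$.

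First, I would choose an ample $\mathbb{Q}$-Cartier divisor $A$ on $Y$ such that $L := -(K_X+D) + \psi^*A$ is ample on $X$. For $m$ a sufficiently divisible positive integer the linear system $|mL|$ is very ample, and by a Bertini-type argument for klt pairs, a general member $H \in |mL|$ makes $(X, \Delta)$ klt, where $\Delta := D + \tfrac{1}{m}H$. By construction $K_X + \Delta \sim_\mathbb{Q} \psi^*A$, so $K_X + \Delta$ is $\mathbb{Q}$-linearly equivalent to a pull-back from $Y$. Moreover, since $H$ is general in a base-point-free linear system, for each prime divisor $P \subset Y$ its multiplicity along every irreducible component of $\psi^{-1}(P)$ vanishes; a local computation of the log canonical threshold of $\psi^*P$ at the generic point of $P$ with respect to $(X,\Delta)$ then shows that the discriminant divisor of $(\psi, \Delta)$ coincides with $B$.

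Now $(X, \Delta)$ is klt with $K_X + \Delta \sim_\mathbb{Q} \psi^*A$. By Ambro's canonical bundle formula \cite{ambro_jdg}, there is a $\mathbb{Q}$-divisor $M$ on $Y$ (the moduli part) satisfying $K_X+\Delta \sim_\mathbb{Q} \psi^*(K_Y+B+M)$, and on a sufficiently high birational model $\sigma\colon \tilde Y \to Y$ the associated moduli divisor $\tilde M$ is nef and equal to $\sigma^*M$. Given a prime divisor $E$ over $Y$, choose $\tilde Y$ so that $E$ appears as a divisor on $\tilde Y$, and let $\tilde X$ be a log resolution of $X \times_Y \tilde Y$ with induced morphisms $\tilde\psi\colon \tilde X \to \tilde Y$ and $\mu\colon \tilde X \to X$. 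Setting $\tilde \Delta := \mu^*(K_X+\Delta) - K_{\tilde X}$, the pair $(\tilde X, \tilde\Delta)$ is sub-klt. The canonical bundle formula on $\tilde Y$ then reads $K_{\tilde Y} + \tilde B + \tilde M = \sigma^*(K_Y+B+M)$, where $\tilde B$ denotes the discriminant of $(\tilde\psi,\tilde\Delta)$; combined with $\tilde M = \sigma^*M$, this yields $K_{\tilde Y} + \tilde B = \sigma^*(K_Y+B)$. Hence the log discrepancy $a(E;Y,B)$ equals $1 - \mathrm{mult}_E \tilde B$, and the sub-klt property of $(\tilde X, \tilde \Delta)$, via the infimum formula for log canonical thresholds, forces $\mathrm{mult}_E \tilde B < 1$. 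As $E$ was arbitrary and $K_Y + B$ is $\mathbb{Q}$-Cartier by hypothesis, $(Y, B)$ is klt.

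The main technical input is Ambro's canonical bundle formula together with the b-nefness of the moduli divisor on sufficiently high birational models of $Y$; without the latter, one could not meaningfully relate $K_{\tilde Y} + \tilde B$ to $\sigma^*(K_Y+B)$, which is precisely what allows the transfer of the klt property from $X$ down to $Y$. The perturbation step is more elementary, though it requires a careful Bertini-type argument to ensure that the discriminant is preserved.
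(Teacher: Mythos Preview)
Your perturbation step is correct and in fact identical to the paper's: both replace $D$ by $\Delta = D + \tfrac{1}{m}H$ so that $(X,\Delta)$ is klt, $K_X+\Delta \sim_\mathbb{Q} \psi^*A$, and the discriminant is unchanged. The divergence comes in the second half.

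There is a genuine gap in your use of Ambro's formula. You assert that on a sufficiently high model $\sigma\colon \tilde Y \to Y$ the moduli divisor satisfies $\tilde M = \sigma^* M$. This is not what b-nefness says. Ambro proves that the moduli b-divisor $\mathbf{M}$ \emph{descends to some resolution} $\bar Y$: there is a model $\bar Y$ on which $M_{\bar Y}$ is nef and such that for every higher model $\tau\colon Y' \to \bar Y$ one has $M_{Y'} = \tau^* M_{\bar Y}$. It does \emph{not} assert that $M_{\bar Y}$ equals the pull-back of $M_Y$ from the original base $Y$; in general $\mathbf{M}$ fails to descend to $Y$. Without $\tilde M = \sigma^* M$ your key identity $K_{\tilde Y} + \tilde B = \sigma^*(K_Y+B)$ does not follow from $K_{\tilde Y} + \tilde B + \tilde M = \sigma^*(K_Y+B+M)$, and so you cannot read off $a(E;Y,B) = 1 - \mathrm{mult}_E\,\tilde B$. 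What you actually get is
\[
a(E;Y,B) \;=\; -\,\mathrm{mult}_E\,\tilde B \;+\; \mathrm{mult}_E\big(\sigma^*M - \tilde M\big),
\]
and the second term has no a priori sign.

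The paper handles this point differently: after the same perturbation it invokes \cite[Theorem 1.2]{fujino99} (an application of Kawamata's positivity theorem) to produce an \emph{effective} divisor $B_1$ on $Y$ with $B \le B_1$ and $(Y,B_1)$ klt, which immediately gives $(Y,B)$ klt since $K_Y+B$ is $\mathbb{Q}$-Cartier. That argument uses the nefness of $M_{\bar Y}$ on the Ambro model in the correct way: one perturbs $M_{\bar Y}$ by a small ample to make it $\mathbb{Q}$-effective, absorbs it into the boundary, and then pushes down. Your approach can be repaired along these lines, but as written the descent claim for $\mathbf{M}$ is the missing idea.
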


\begin{proof}Let $H_Y$ be an ample $\mathbb{Q}$-divisor on $Y$ such that $H_X:=-(K_X+D)+\psi^*H_Y$ is ample, and let $m \ge 2$ be an integer such that $mH_X$ is very ample. Let $D_1\in|mH_X|$ be a general member. By general choice of $D_1$, we may assume that $(X,D+\frac{1}{m}D_1)$ has klt singularities and that the discriminant divisor of $(\psi,D+\frac{1}{m}D_1)$ is $B$. Notice that $K_X+D+\frac{1}{m}D_1\sim_\mathbb{Q} \psi^*H_Y$.
Arguing as in the proof of \cite[Theorem 1.2]{fujino99}, one then shows that there exists an effective divisor $B_1$ on $Y$ such that $(Y,B_1)$ is klt and $B \le B_1$. This immediately implies that $(Y,B)$ is klt, proving the lemma. 
\end{proof}

\section{A canonical bundle formula for generically isotrivial fibrations}

In this section we prove a canonical bundle formula for generically isotrivial fibrations (see Theorem \ref{thm:cbf}), which might be of independent interest. The canonical bundle formula for the so-called lc-trivial fibrations is a higher dimensional analogue of Kodaira's canonical bundle formula for minimal elliptic surfaces. We refer to \cite{kollar_cbf} and the references therein for a (rather delicate) precise formulation. 

We will use the following definition.

\begin{defn}\label{defn:isotrivial}
Let $f \colon X \to Y$ be a morphism with connected fibers of algebraic varieties, and let $D$ be an integral divisor on $X$. Suppose that $D$ is effective and reduced over the generic point of $Y$. The fibration $(f,D)$ is called \textit{generically isotrivial} if there is a dense Zariski open set $Y^\circ \subseteq Y$ such that the following holds. For every point $y \in Y^\circ$, there exists a Euclidean open neighbourhood $U$ of $y$ in $Y^\circ$ such that $$\big(f^{-1}(U),D_{|f^{-1}(U)}\big)\cong \big(U \times F,U \times D_{|F}\big)$$ over $U$, where $F$ if a fiber of $f_{|f^{-1}(U)}$.  
\end{defn}

\begin{rem}\label{remark:isotriviality}
Setup as in Definition \ref{defn:isotrivial}. Suppose in addition that $f$ is projective.   
Then $(f,D)$ is generically isotrivial if and only if there exists a dense Zariski open set $Y^\circ \subseteq Y$ and a finite morphism $\gamma^\circ\colon Y_1^\circ \to Y^\circ$ such that $$\big(X_1^\circ,{D_1^\circ}_{|X_1^\circ}\big)\cong \big(Y_1^\circ \times F_1,Y_1^\circ \times {D_1^\circ}_{|F_1}\big)$$ over $Y_1^\circ$, where $X_1^\circ$ denotes the normalization of $Y_1^\circ \times_{Y^\circ} X$ with natural morphisms $f_1^\circ \colon X_1^\circ \to Y_1^\circ$ and $\gamma_1^\circ\colon X_1^\circ \to X^\circ=:f^{-1}(Y^\circ)$, $F_1$ is a general fiber of $f_1^\circ$, and $D_1^\circ:=(\gamma_1^\circ)^*(D_{|X^\circ})$. 
Indeed, shrinking $Y$, if necessary, we may assume that $f$ is flat and that the restriction of $f$ to any irreducible component of $D$ is also flat over $Y$. The claim is then an easy consequence of the fact that the space $\textup{Isom}_{Y}\big((X,D),(Y\times F,Y\times D_{|F})\big)$
parametrizing isomorphims of pairs $(X_y,D_{|X_y}) \cong (F,D_{|F})$ with $y\in Y$ is quasi-projective over $Y$.  
\end{rem}

\begin{thm}\label{thm:cbf}
Let $X$ be a normal projective variety, and let $f \colon  X \to Y$ be a surjective morphism with connected fibers onto a normal projective variety $Y$.
Let also $D$ be a Weil divisor on $X$. Suppose that $D$ is effective in a neighbourhood of a general fiber of $f$ and that 
$(X,D)$ is log canonical over the generic point of $Y$.
Suppose in addition that there exists a Cartier divisor $C$ on $Y$ such that $K_X+D\sim_\mathbb{Q} f^* C$. 
If $(f,D)$ is generically isotrivial, then $C \sim_\mathbb{Q} K_Y + B$, where $B$ denotes the discriminant divisor of $(f,D)$.
\end{thm}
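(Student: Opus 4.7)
My strategy is to reduce to an explicit computation on a global product model by base-changing along a finite cover that trivializes the fibration, and then descend the resulting $\mathbb{Q}$-linear equivalence back to $Y$.

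\textbf{Step 1 (Base change).} By Remark \ref{remark:isotriviality}, there is a dense open $Y^\circ\subseteq Y$ and a finite cover $\gamma^\circ\colon Y_1^\circ\to Y^\circ$ trivializing the fibration as a product. Extend $\gamma^\circ$ to a finite cover $\gamma\colon Y_1\to Y$ between normal projective varieties (by normalizing $Y$ in the function field of $Y_1^\circ$), let $X_1$ be the normalization of $Y_1\times_Y X$ with induced maps $f_1\colon X_1\to Y_1$ and $\gamma_1\colon X_1\to X$, and set $D_1:=\gamma_1^*(K_X+D)-K_{X_1}$. Pulling back the hypothesis gives $K_{X_1}+D_1\sim_\mathbb{Q} f_1^*\gamma^*C$, and by Fact \ref{fact:discriminant_finite_base_change} the discriminant $B_1$ of $(f_1,D_1)$ satisfies $K_{Y_1}+B_1\sim_\mathbb{Z}\gamma^*(K_Y+B)$. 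Since $\gamma_*\gamma^*=\deg(\gamma)\cdot\mathrm{id}$ on $\mathbb{Q}$-linear-equivalence classes, it suffices to prove that $M_1:=\gamma^*C-K_{Y_1}-B_1\sim_\mathbb{Q} 0$.

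\textbf{Step 2 (Computation on the product).} Restricting $K_X+D\sim_\mathbb{Q} f^*C$ to a general fiber $F$ yields $K_F+D_{|F}\sim_\mathbb{Q} 0$. Consider the global product $Z:=Y_1\times F$ with boundary $D_Z:=p_F^*D_{|F}$ and projection $p_1\colon Z\to Y_1$. Directly, $K_Z+D_Z=p_1^*K_{Y_1}+p_F^*(K_F+D_{|F})\sim_\mathbb{Q} p_1^*K_{Y_1}$; and because $p_1$ is smooth and $D_Z$ is purely horizontal, the discriminant of $(p_1,D_Z)$ on $Y_1$ vanishes identically, since for every prime $P\subseteq Y_1$, $p_1^*P$ is reduced and $(Z,D_Z+t\,p_1^*P)$ is log canonical near $p_1^{-1}(P)$ exactly for $t\leq 1$.

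\textbf{Step 3 (Birational comparison -- main obstacle).} The pairs $(X_1,D_1)$ and $(Z,D_Z)$ are isomorphic over $Y_1^\circ$, hence birational over $Y_1$. Take a common log resolution $\beta_1\colon W\to X_1$, $\beta_2\colon W\to Z$ over $Y_1$, put $g_W:=f_1\circ\beta_1=p_1\circ\beta_2$, and form the crepant boundaries $D_{W,i}:=\beta_i^*(K+D)-K_W$. By Fact \ref{fact:discriminant_crepant}, the discriminants of $(g_W,D_{W,1})$ and $(g_W,D_{W,2})$ on $Y_1$ equal $B_1$ and $0$, respectively, and $K_W+D_{W,1}\sim_\mathbb{Q} g_W^*\gamma^*C$, $K_W+D_{W,2}\sim_\mathbb{Q} g_W^*K_{Y_1}$. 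Setting $E_W:=D_{W,1}-D_{W,2}$, which is supported above $Y_1\setminus Y_1^\circ$ (as the two crepant pullbacks agree there), one gets $g_W^*M_1\sim_\mathbb{Q} E_W-g_W^*B_1$. The main technical step is to match the multiplicities of $E_W$ along components of $g_W^{-1}(Y_1\setminus Y_1^\circ)$ dominating prime divisors of $Y_1$ against the coefficients of $B_1$ prescribed by the discriminant formula, using crucially that the discriminant of $(g_W,D_{W,2})$ vanishes (which pins down the multiplicities of $D_{W,2}$ along such components), and to conclude that $E_W-g_W^*B_1$ is $g_W$-exceptional, i.e., supported on prime divisors of $W$ whose image in $Y_1$ has codimension $\geq 2$.

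\textbf{Step 4 (Descent).} Granting the above, $g_W^*M_1$ is $\mathbb{Q}$-linearly equivalent to a $g_W$-exceptional divisor. Writing the latter as a difference of effective $g_W$-exceptional Weil divisors and pushing forward the associated sheaves through $g_W$, one uses the standard fact that for any effective $g_W$-exceptional Weil divisor $N$ on $W$, the pushforward $g_{W,*}\sO_W(N)$ is a reflexive rank-one sheaf on the normal variety $Y_1$ agreeing with $\sO_{Y_1}$ off a codimension-$\geq 2$ subset, hence equal to $\sO_{Y_1}$. This forces $M_1\sim_\mathbb{Q} 0$, and applying $\gamma_*$ then yields $C\sim_\mathbb{Q} K_Y+B$ on $Y$.
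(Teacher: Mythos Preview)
Your reduction steps are sound: the base change in Step~1, the direct computation on the product in Step~2, and the descent in Step~4 are all correct (with minor care in Step~4 about why $(g_W)_*\sO_W(mN_\pm)$ is sandwiched between $\sO_{Y_1}$ and its own reflexive hull, hence equals $\sO_{Y_1}$). The problem is Step~3, which you yourself flag as the ``main technical step'' and then assume in Step~4. This step is not a technicality; it is essentially the content of the theorem.

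Concretely, the vanishing of the discriminant of $(g_W,D_{W,2})$ does \emph{not} pin down the multiplicities $d_{2,Q}:=\mathrm{mult}_Q(D_{W,2})$ along the vertical prime divisors $Q$ over a prime $P\subset Y_1$. It says only that $\min_Q\frac{1-d_{2,Q}}{m_Q}=1$, i.e.\ $d_{2,Q}\le 1-m_Q$ for every $Q$, with equality for at least one $Q$. Likewise, the discriminant coefficient $b_P$ of $(g_W,D_{W,1})$ gives $d_{1,Q}\le 1-m_Q+b_P m_Q$ for every $Q$, with equality for some $Q$. From these inequalities you cannot conclude $d_{1,Q}-d_{2,Q}=b_P m_Q$ for \emph{all} $Q$, which is what ``$E_W-g_W^*B_1$ is $g_W$-exceptional'' requires. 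The two pairs $(X_1,D_1)$ and $(Z,D_Z)$ are birational over $Y_1$ but not crepant birational (indeed $K_W+D_{W,1}$ and $K_W+D_{W,2}$ differ by $g_W^*(M_1+B_1)$), so there is no a~priori matching of discrepancies along the special fibers.

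What you are trying to prove in Step~3 is precisely that the moduli part $M_1$ vanishes, and this is where the paper brings in genuine input beyond birational bookkeeping. The paper first arranges, via semistable-type reduction, a model $f_4\colon X_4\to Y_3$ with reduced fibers in codimension one and SNC boundary; it then invokes Hodge theory: since the fibration is isotrivial the local system $R^d(f_4^\circ)_*\mathbb{C}$ on the open part is trivial, hence its Deligne canonical extension is trivial, and Fujino's identification of $(f_4)_*\sO_{X_4}(K_{X_4/Y_3}+R_4)$ with the upper canonical extension of the bottom Hodge piece forces this pushforward to be $\sO_{Y_3}$. Only after this Hodge-theoretic step does the paper compute the remaining pushforward $(f_4)_*\sO_{X_4}(E_4+G_4-S_4)$ explicitly and match it with the discriminant. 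Your outline replaces this Hodge input with a purely birational comparison, but without an argument for the multiplicity matching, Step~3 is a restatement of the goal rather than a proof of it.
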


\begin{proof}
Let $m$ be smallest positive integer such that $m(K_X+D)\sim_\mathbb{Z} mf^* C$, and let $\gamma \colon X_1 \to X$ be the corresponding index one cover, which is quasi-\'etale (see \cite[Definition 2.52]{kollar_mori}). Set $D_1:=\gamma^*(K_X+D)-K_{X_1}$.
By choice of $m$, the morphism $f_1:=f\circ \gamma$ has connected fibers.
Notice that $D_1$ is effective in a neighbourhood of a general fiber of $f_1$ and that the pair $(X_1,D_1)$ is log canonical over the generic point of $Y$. Moreover, by construction, $K_{X_1}+D_1\sim_\mathbb{Z}f_1^*C$. By Fact \ref{fact:discriminant_crepant}, $B$ is also the discriminant divisor of $(f_1,D_1)$. Finally, the fibration $(f_1,D_1)$ is easily seen to be generically isotrivial as well 
using Lemma \ref{lemma:fundamental_group} below.
Replacing $(f,D)$ by $(f_1,D_1)$, if necessary, we may therefore assume without loss of generality that the following holds.

\begin{assumption}
The relation $K_X+D\sim_\mathbb{Z} f^* C$ holds. In particular, $K_X+D$ is Cartier.
\end{assumption}

By assumption (see also Remark \ref{remark:isotriviality}), there exists a finite cover $\gamma \colon Y_1 \to Y$ such that the following holds. Let $X_1$ be the normalization of $Y_1 \times_Y X$ with natural morphisms $f_1 \colon X_1 \to Y_1$ and $\tau\colon X_1 \to X$. 
Set $D_1:=\tau^*(K_X+D)-K_{X_1}$ and let $F_1$ be a general fiber of $f_1$. Then there exist a dense open set $Y_1^\circ \subseteq Y_1$ and an isomorphism of pairs
$$\big(f_1^{-1}(Y_1^\circ),{D_1}_{|f^{-1}(Y_1^\circ)} \big)\cong  \big(Y_1^\circ \times F_1,Y_1^\circ \times {D_1}_{|F_1}\big)$$ over 
$Y_1^\circ$.  
Let $B_1$ denotes the discriminant divisor of $(f_1,D_1)$.
Let $\gamma_1\colon Y_2 \to Y_1$ be a resolution of singularities, and let $\tau_1 \colon X_2 \to X_1$
be a resolution of the main component of the product $Y_2 \times_{Y_1} X_1$ with natural morphism $f_2\colon X_2 \to Y_2$. 
Set $D_2:=\tau_1^*(K_{X_1}+D_1)-K_{X_2}$.
Set also $Y_2^\circ := \gamma_1^{-1}(Y_1^\circ)$ and $X_2^\circ:=f_2^{-1}(Y_2^\circ)$.
We may assume without loss of generality that $Y_2 \setminus Y_2^\circ$ has
simple normal crossings and that $$\big(X_2^\circ,{D_2}_{|X_2^\circ}\big) \cong 
\big(Y_2^\circ \times F_2,Y_2^\circ \times {D_2}_{|F_2}\big)$$
over $Y_2^\circ$, where $F_2$ denotes a general fiber of $f_2$.
In particular, ${f_2}_{|X_2^\circ}\colon X_2^\circ \to Y_2^\circ$ is a smooth morphism.
Applying \cite[Theorem 6.4]{viehweg_moduli_polarized_manifolds}, we see that there exists a finite morphism
$\gamma_2\colon Y_3 \to Y_2$ of complex manifolds and a resolution $X_4$
of the normalization $X_3$ of the product $Y_3\times_{Y_2} X_2$ such that the following holds.
Let $f_4\colon X_4 \to Y_3$ denote the natural morphism, and set $Y_3^\circ := \gamma_2^{-1}(Y_2^\circ)$. Then 
$Y_3\setminus Y_3^\circ$ and $f_4^{-1}\big(Y_3\setminus Y_3^\circ\big)$
have simple normal crossings, and $f_4$ has reduced fibers in codimension one.
We obtain a commutative diagram as follows:
\begin{center}
\begin{tikzcd}[row sep=large]
X_4\ar[d, "{f_4}"']\ar[rrr, "{\tau_3, \textup{ birational}}"] &&& X_3\ar[d, "{f_3}"]\ar[rr, "{\tau_2,\textup{ finite}}"] && X_2 \ar[d, "{f_2}"]\ar[rrr, "{\tau_1, \textup{ birational}}"] &&& X_1  \ar[d, "{f_1}"]\ar[rr, "{\tau,\textup{ finite}}"] & & X\ar[d, "{f}"] \\
Y_3 \ar[rrr, equal] &&& Y_3 \ar[rr, "{\gamma_2,\textup{ finite}}"'] && Y_2 \ar[rrr, "{\gamma_1, \textup{ birational}}"'] &&& Y_1 \ar[rr, "{\gamma, \textup{ finite}}"']&& Y.
\end{tikzcd}
\end{center}
Set $D_3:=\tau_2^*(K_{X_2}+D_2)-K_{X_3}$ and $D_4:=\tau_3^*(K_{X_3}+D_3)-K_{X_4}$.
Notice that $D_4$ has integral coefficients since $K_X+D$ is Cartier by assumption.
Set also $X_4^\circ:=f_4^{-1}(Y_3^\circ)$.
Finally,
we may assume without loss of generality that 
$X_4$ is a log resolution of $(X_3,D_3)$ and that 
$$\big(X_4^\circ,{D_4}_{|X_4^\circ}\big) \cong 
\big(Y_3^\circ \times F_4,Y_3^\circ \times {D_4}_{|F_4}\big)$$
over $Y_3^\circ$, where $F_4$ denotes a general fiber of $f_4$.

Write $D_4=R_4+S_4-E_4-G_4$, where $R_4$, $S_4$, $E_4$ and $G_4$ are effective divisors with no common components such that any irreducible component of $R_4+E_4$ maps onto $Y_3$ and any irreducible component of $S_4+G_4$ maps into a proper subset of $Y_3$. 
Observe that the divisor $R_4$ is reduced since $(X,D)$ is log canonical over the generic point of $Y$. Moreover, since $D$ is effective in a neighbourhood of a general fiber of $f$, any irreducible component of 
$E_4$ is exceptional over $X_1$. 
Blowing-up strata of $R_4$ mapping into a proper subset of $Y_3$, if necessary, we may also assume that any stratum of $R_4$ dominates $Y_3$. 
Set $f_4^\circ:={f_4}_{|X_4^\circ\setminus \textup{Supp}(R_4)}$ and $d:=\dim X_4 - \dim Y_3$. By construction, the local system
$(R^d f_4^\circ)_*\mathbb{C}_{X_4^\circ\setminus \textup{Supp}(R_4)}$ is trivial. In particular, the Deligne's canonical extension of 
$(R^d f_4^\circ)_*\mathbb{C}_{X_4^\circ\setminus \textup{Supp}(R_4)}$ is the trivial local system on $Y_3$ (see \cite[pp. 91-95]{deligne_equ_diff}).
Now, by \cite[Theorems 3.1 and 3.9]{fujino_higher}, the sheaf $(f_4)_*\sO(K_{X_4/Y_3}+R_4)$ is locally free (of rank $1$) and numerically effective. Moreover, it identifies with the so-called (upper) canonical extension of 
$$F^d \big((R^d f_4^\circ)_*\mathbb{C}_{X_4^\circ\setminus \textup{Supp}(R_4)}\otimes \sO_{Y_3^\circ}\big)
\subset (R^d f_4^\circ)_*\mathbb{C}_{X_4^\circ\setminus \textup{Supp}(R_4)}\otimes \sO_{Y_3^\circ},$$ where $F^{\bullet}\big((R^d f_4^\circ)_*\mathbb{C}_{X_4^\circ\setminus \textup{Supp}(R_4)}\otimes \sO_{Y_3^\circ}\big)$ denotes the Hodge filtration (see \cite[Section 3.1]{fujino_higher} and the references therein). This immediately implies that 
$$(f_4)_*\sO(K_{X_4/Y_3}+R_4)\cong \sO_{Y_3},$$
and hence
$$\sO_{Y_3}(C_3) \otimes (f_4)_*\sO_{X_4}(E_4+G_4-S_4) \cong  \sO_{Y_3}(K_{Y_3}),$$
where $C_3$ denotes the pull-back of $C$ to $Y_3$. 

Let $B_3^{+}$ be the smallest effective divisor on $Y_3$ such that 
$S_4 \le f_4^*B_3^{+}$ over the codimension $1$ points of $f_4\big(\textup{Supp}\,S_4\big)$
and let $B_3^{-}$ be the largest (effective) divisor on $Y_3$ such that 
$\textup{Supp}\, B_3^{-} \subseteq f_4(\textup{Supp}\,G_4)$ and 
$f_4^*B_3^{-} \le G_4$
over the codimension $1$ points of $f_4(\textup{Supp}\,G_4)$.
By construction, there is an open set $W \subseteq Y_3$ with complement of codimension at least $2$ and a natural inclusion
$${\sO_{Y_3}(B_3^{-}-B_3^{+})}_{|W} \subseteq {(f_4)_*\sO_{X_4}(E_4+G_4-S_4)}_{|W}.$$ 
We now show that this map is an isomorphism.
Let $U \subseteq W$ be dense open set, and let $t$ be a rational function on $X_4$ such that 
$$(\div\,t)_{|f_4^{-1}(U)}+(E_4+G_4-S_4)_{|f_4^{-1}(U)} \ge 0.$$
Since $E_4$ is exceptional over $X_1$, any regular function on $F_4 \setminus \textup{Supp}\,{E_4}_{|F_4}$ is constant.
This immediately implies that 
$$(\div\,t)_{|f_4^{-1}(U)}+(G_4-S_4)_{|f_4^{-1}(U)} \ge 0.$$
Moreover, since $$\big(X_4^\circ,{D_4}_{|X_4^\circ}\big) \cong 
\big(Y_3^\circ \times F_4,Y_3^\circ \times {D_4}_{|F_4}\big),$$ there exists a rational function $r$ on $Y$
such that $t=r\circ f_4$. 
One then easily checks that $$(\div\,r)_U+{(B_3^{-}-B_3^{+})}_{|U} \ge 0$$
using the fact that $f_4$ has reduced fibers in codimension $1$.
This shows that $${\sO_{Y_4}(B_3^{-}-B_3^{+})}_{|W} \cong {(f_4)_*\sO_{X_4}(E_4+G_4-S_4)}_{|W},$$ and hence 
$$(f_4)_*\sO_{X_4}(E_4+G_4-S_4) \cong  \sO_{Y_3}(B_3^{-}-B_3^{+})$$
since both sheaves are locally free on $Y_3$.

Next, we show that the discriminant divisor $B_4$ of $(f_4,D_4)$
is $B_3^{+} - B_3^{-}.$
Let $P$ be a prime divisor on $Y$. If $P$ is not contained in $f_4(\textup{Supp}\,G_4)\cup f_4(\textup{Supp}\,S_4)$, then $P$ is obviously not contained in the supports of $B_3^{+}$ and $B_3^{-}$. Moreover, $P$ is not contained in the support of $B_4$ since $R_4$ is a relative normal crossings divisor over $Y_3 \setminus Y_3^\circ$ by construction.  
Suppose that $P$ is contained in $f_4(\textup{Supp}\,G_4)\cup f_4(\textup{Supp}\,S_4)$. 
Then $\mult_P(B_3^{-})=0$ since $S_4$ and $G_4$ have no common components. 
One then readily checks that $\mult_P (B_4)=\mult_P(B_3^{+} - B_3^{-})$ using the fact that $f_4$ has reduced fibers in codimension $1$.
This shows that the discriminant divisor of $(f_4,D_4)$ is $B_3^{+} - B_3^{-}$ and thus,
$$C_3 \sim_\mathbb{Z} K_{Y_3}+B_4.$$
By Fact \ref{fact:discriminant_crepant}, we have $B_4=B_3$. On the other hand, by Fact \ref{fact:discriminant_finite_base_change},
$K_{Y_1}+B_1 \sim_\mathbb{Q}\gamma^*(K_Y+B)$ and $K_{Y_3}+B_3 \sim_\mathbb{Q}\gamma_2^*(K_{Y_2}+B_2)$.
Finally, $(\gamma_1)_*B_2$ is obviously the discriminant divisor of $(f_1\circ \tau_1,D_2)$, and hence 
$(\gamma_1)_*B_2=B_{Y_1}$ by Fact \ref{fact:discriminant_crepant} again.
This shows that 
$$C \sim_\mathbb{Q} K_Y + B,$$
completing the proof of the theorem.
\end{proof}

\begin{lemma}\label{lemma:fundamental_group}
Let $X$ be a normal variety and let $d$ be a positive integer. Then there are only finitely many Galois quasi-\'etale covers of $X$ of degree $d$ up to isomorphism.
\end{lemma}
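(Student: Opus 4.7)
The plan is to reduce the statement to a counting problem for finite quotients of the étale fundamental group of the smooth locus, and then invoke finite generation of the topological fundamental group of a complex algebraic variety.

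First I would argue that quasi-\'etale covers of a normal variety $X$ correspond bijectively to finite \'etale covers of $X_{\textup{reg}}$. Indeed, given a Galois quasi-\'etale cover $\gamma\colon X' \to X$, the restriction $\gamma^{-1}(X_{\textup{reg}}) \to X_{\textup{reg}}$ is finite and \'etale outside a closed subset of codimension $\ge 2$ of the smooth variety $X_{\textup{reg}}$; by the Zariski--Nagata purity of the branch locus, it is therefore \'etale. Conversely, any finite \'etale cover of $X_{\textup{reg}}$ extends uniquely to a quasi-\'etale cover of $X$ by taking the normalization of $X$ in the corresponding function field. Under this bijection, Galois covers of degree $d$ on the $X$-side correspond to Galois \'etale covers of degree $d$ on the $X_{\textup{reg}}$-side, which in turn correspond to conjugacy classes of continuous surjections $\pi_1^{\textup{\'et}}(X_{\textup{reg}}) \twoheadrightarrow G$ onto finite groups $G$ of order $d$.

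Next I would use that $X_{\textup{reg}}$ is a separated complex algebraic variety of finite type and therefore, by the triangulability results of {\L}ojasiewicz (or by combining Hironaka's resolution of singularities with finite CW-structure of smooth projective varieties minus a normal crossings divisor), has the homotopy type of a finite CW complex. In particular, $\pi_1^{\textup{top}}(X_{\textup{reg}})$ is finitely generated, and consequently its profinite completion $\pi_1^{\textup{\'et}}(X_{\textup{reg}})$ is topologically finitely generated. For any fixed finite group $G$, the set $\mathrm{Hom}_{\textup{cont}}(\pi_1^{\textup{\'et}}(X_{\textup{reg}}), G)$ is then finite, since a continuous homomorphism is determined by the images of a finite set of topological generators.

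To conclude I would observe that there are only finitely many isomorphism classes of finite groups of order $d$, so the total set of conjugacy classes of continuous surjections $\pi_1^{\textup{\'et}}(X_{\textup{reg}}) \twoheadrightarrow G$ with $|G|=d$ is finite, which by the first step yields the finiteness of Galois quasi-\'etale covers of $X$ of degree $d$ up to isomorphism. The only subtle point, which I see as the main obstacle, is justifying the finite generation of $\pi_1^{\textup{top}}(X_{\textup{reg}})$ without assuming $X$ is quasi-projective; but this follows from the general fact that a complex variety of finite type admits a finite CW-model, so no serious difficulty arises.
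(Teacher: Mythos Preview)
Your proposal is correct and follows essentially the same route as the paper: both arguments use Zariski--Nagata purity to reduce to finite \'etale covers of $X_{\textup{reg}}$, then invoke finite generation of $\pi_1(X_{\textup{reg}})$ to bound the number of index-$d$ normal subgroups (equivalently, surjections onto order-$d$ groups). The paper's version is terser, citing the Riemann existence theorem from SGA1 in place of your profinite-completion phrasing, but the substance is identical.
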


\begin{proof}
By the Nagata-Zariski purity theorem, any quasi-\'etale cover of $X$ branches only on the singular set of $X$. Thus, by the Riemann existence theorem (see \cite[Expos\'e XII, Th\'eor\`eme 5.1]{sga1}), we need to show that there are only finitely many normal subgroups of the topological fundamental group $\pi_1(X_{\textup{reg}})$ of index $d$. But this follows easily from the fact that the group $\pi_1(X_{\textup{reg}})$ is finitely generated, proving the lemma.  
\end{proof}

The following is an immediate consequence of Theorem \ref{thm:cbf}.

\begin{cor}\label{cor:cbf}
Let $X$ be a normal projective variety, and let $f \colon  X \to Y$ be a surjective morphism with connected fibers onto a normal projective variety $Y$.
Let $D$ be a reduced effective divisor on $X$ such that $(X,D)$ is log canonical with $K_X+D\sim_\mathbb{Q}0$.
Suppose furthermore that $(f,D)$ is generically isotrivial. Then $K_Y + B \sim_\mathbb{Q} 0$, where $B$ denotes the discriminant divisor of $(f,D)$.
\end{cor}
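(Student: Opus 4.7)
The plan is to apply Theorem~\ref{thm:cbf} directly, with the trivial choice $C := 0$ viewed as a Cartier divisor on $Y$. The hypothesis $K_X + D \sim_{\mathbb{Q}} 0$ then reads exactly as $K_X + D \sim_{\mathbb{Q}} f^{*}C$, which supplies one of the two nontrivial inputs of Theorem~\ref{thm:cbf}. The generic isotriviality of $(f, D)$ is supplied by hypothesis.

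Before invoking Theorem~\ref{thm:cbf}, I would spend a sentence verifying the remaining bookkeeping hypotheses. Since $D$ is globally reduced and effective, it is a fortiori effective in a neighbourhood of a general fiber of $f$; and since $(X,D)$ is log canonical on all of $X$, it is log canonical over the generic point of $Y$. Thus every hypothesis of Theorem~\ref{thm:cbf} is met.

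Applying the theorem yields $C \sim_{\mathbb{Q}} K_Y + B$, where $B$ is the discriminant divisor of $(f, D)$. Substituting $C = 0$ gives $K_Y + B \sim_{\mathbb{Q}} 0$, as desired. In this case there is no real obstacle beyond matching hypotheses: all the substantive work--the index-one cover, the base change producing reduced fibers in codimension one, and the Hodge-theoretic identification of $(f_4)_*\sO(K_{X_4/Y_3} + R_4)$ with $\sO_{Y_3}$--has already been carried out in the proof of Theorem~\ref{thm:cbf}.
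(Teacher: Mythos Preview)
Your proof is correct and matches the paper's approach: the paper simply states that Corollary~\ref{cor:cbf} is an immediate consequence of Theorem~\ref{thm:cbf}, and your argument spells out exactly that deduction by taking $C=0$ and checking the hypotheses.
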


\section{Descent of vector bundles}

The proof of Theorem \ref{thm:main_descent_intro} relies on the following auxiliary statement. To put the result into perspective, consider a Mori extremal contraction $f \colon X \to Y$ of a projective klt space. By the cone theorem, a line bundle on $X$ of degree zero on every contracted rational curve comes from $Y$. We first generalize this result to vector bundles of arbitrary rank. The special case where $\dim X = \dim Y$ follows from \cite[Theorem 4.1]{kebekus_al_naht} together with Lemma \ref{lemma:R_flat_numerically_flat}.

\begin{thm}\label{thm:descent}
Let $f \colon X \to Y$ be a projective morphism with connected fibers of normal, quasi-projective varieties. Suppose that 
there is an effective $\mathbb{Q}$-divisor $D$ on $X$ such that the pair $(X,D)$ is klt and $-(K_X+D)$ is $f$-ample. Let $\sE$ be a locally free, $f$-relatively R-flat sheaf on $X$. Then there exists a locally free sheaf $\sG$ on $Y$ such that
$\sE \cong f^*\sG$.
\end{thm}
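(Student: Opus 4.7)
The plan is to set $\sG := f_*\sE$ and to verify, via cohomology and base change, that $\sG$ is locally free of rank $r := \mathrm{rank}\,\sE$ and that the evaluation morphism $f^*\sG \to \sE$ is an isomorphism. Everything hinges on establishing that $\sE|_{X_y} \cong \sO_{X_y}^{\oplus r}$ for every $y \in Y$: once fiberwise triviality is granted, the function $y \mapsto h^0(X_y,\sE|_{X_y})$ is constant equal to $r$, Grauert's theorem makes $f_*\sE$ locally free of rank $r$ compatible with base change, and the evaluation morphism $f^*f_*\sE \to \sE$ is fiberwise surjective since every $\sE|_{X_y}$ is globally generated, hence surjective by Nakayama and then an isomorphism between locally free sheaves of the same rank.

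Two ingredients take us most of the way. First, by the Hacon--McKernan theorem \cite[Corollary~1.10]{hacon_mckernan} (already invoked in the proof of Lemma~\ref{lemma:R_flat_functoriality}), the assumption that $(X,D)$ is klt with $-(K_X+D)$ $f$-ample forces every scheme-theoretic fiber $X_y$ to be rationally chain connected. Second, restricting the $f$-relative R-flatness of $\sE$ to such a fiber and applying Lemma~\ref{lemma:R_flat_numerically_flat} shows that $\sE|_{X_y}$ is numerically flat.

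The main obstacle is upgrading this fiberwise numerical flatness to fiberwise triviality. My plan is to refine the chain-of-rational-curves construction of Lemma~\ref{lemma:R_flat_numerically_flat}: after fixing a basepoint $x_0 \in X_y$ and a frame of $\sE_{x_0}$, in the notation of that proof the sheaf $\sG_1 = (\pi_1)_*(e_1^*\sE)$ is not merely numerically flat on $B_1$ but is canonically trivialized by pullback of the chosen frame along the section $\sigma_{1,1}$, so $e_1^*\sE = \pi_1^*\sG_1$ is globally trivial on $S_1$ and the frame propagates to every point of $e_1(\sigma_{1,2})$. Iterating through the chains witnessing rational chain-connectedness of $X_y$ then produces an actual trivialization of $\sE|_{X_y}$, not merely numerical flatness. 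When $X_y$ is singular, reducible, or non-reduced I would alternatively pass to a desingularization of an irreducible component of the reduced fiber, exploit that a smooth rationally chain connected projective variety is simply connected (Koll\'ar--Miyaoka--Mori), invoke \cite[Theorem~1.1]{biswas_vb_rc} or the Demailly--Peternell--Schneider description of numerically flat bundles as unitary local systems to obtain triviality upstairs, and then descend by reflexivity. The principal hurdle is thus transporting the classical triviality result for numerically flat bundles --- stated only for smooth rationally connected varieties --- across the desingularizations and normalizations required by the possibly pathological fibers of $f$; once this is settled, the descent via base change is routine.
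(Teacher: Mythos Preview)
Your approach has two genuine gaps that the paper's argument is designed to avoid.

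\textbf{Flatness.} Grauert's theorem and the cohomology-and-base-change machinery you invoke require $f$ to be flat, and here $f$ is not assumed flat. Indeed, divisorial and flipping contractions --- the very situations for which this theorem is used in the paper --- are essentially never flat. Without flatness, the constancy of $h^0(X_y,\sE|_{X_y})$ does not force $f_*\sE$ to be locally free or to commute with arbitrary base change, so the backbone of your argument collapses.

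\textbf{Triviality on special fibers.} Both your chain-of-rational-curves refinement and your resolution/simple-connectedness alternative take place on reduced schemes. At best they yield triviality of $\sE$ on $(X_y)_{\mathrm{red}}$ or on a resolution thereof, but what you actually need is $\sE|_{X_y}\cong\sO_{X_y}^{\oplus r}$ on the \emph{scheme-theoretic} fiber, which may be non-reduced. ``Descending by reflexivity'' does not bridge this: reflexivity is a codimension-two phenomenon on normal varieties and says nothing about passing from a resolution to a non-reduced or non-normal target. Moreover, the chain argument does not produce a global trivialization even on the reduced fiber: two different rational chains from $x_0$ to $x$ can induce different identifications $\sE_{x_0}\cong\sE_x$, and controlling this monodromy is precisely what requires simple connectedness of a smooth model --- after which you are back to the unresolved descent problem.

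The paper sidesteps both issues. For $\dim X=\dim Y$ it quotes \cite[Theorem~4.1]{kebekus_al_naht}. For $\dim Y<\dim X$ it passes to the projectivization $\mathbb{P}_X(\sE)$, applies the relative base-point-free theorem to the tautological class $\sO_{\mathbb{P}_X(\sE)}(1)$, and obtains a factorization $\mathbb{P}_X(\sE)\to Z\to Y$. An intersection-theoretic argument --- using that $\sE$ is numerically flat on each fiber, hence $c_1=c_2=0$ there, so $\sO(1)^r\equiv 0$ on the fibered projectivization --- forces $Z\to Y$ to be equidimensional, and then a known criterion identifies $(Z,\sL)\cong(\mathbb{P}_Y(\sG),\sO(1))$. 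This works uniformly over all of $Y$ with no flatness hypothesis and no fiber-by-fiber triviality analysis.
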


\begin{proof}
By \cite[Theorem 1.2]{hacon_mckernan}, every fiber of $f$ is rationally chain connected. Together with Lemma \ref{lemma:R_flat_numerically_flat}, this implies that $\sE$ is $f$-numerically flat.

If $\dim X = \dim Y$, then Theorem \ref{thm:descent} follows from \cite[Theorem 4.1]{kebekus_al_naht}. Suppose from now on that 
$\dim Y < \dim X$. The proof is similar to that of \textit{loc. cit.} and so we leave some easy details for the reader. 

Let $\sO_{\mathbb{P}_X(\sE)}(1)$ denote the tautological line bundle on $\mathbb{P}_X(\sE)$, and let $p\colon \mathbb{P}_X(\sE) \to X$ denote the projection map. 
Let $C$ be a divisor on $X$ such that $\sO_{\mathbb{P}_X(\sE)}(C)\cong\sO_{\mathbb{P}_X(\sE)}(1)$ and set $r:=\textup{rank}\,\sE$. Note that the pair $\big(\mathbb{P}_X(\sE),p^*D\big)$ is klt. 

Set $\psi:=f\circ p$, and let $F$ be a general fiber of $f$. Note that $F$ is of klt type and rationally chain connected.
By \cite[Theorem 1.1]{biswas_vb_rc} applied to a resolution of $F$ together with \cite[Theorem 1.2]{hacon_mckernan}, we must have $\sE_{|F}\cong \sO_F^{\oplus r}$.
Moreover, by the adjunction formula, $-(K_F+D_{|F})$ is ample.
This implies that the restriction of the $\mathbb{Q}$-divisor
$$C-\big(K_{\mathbb{P}_X(\sE)}+p^*D\big)\sim_\mathbb{Z}(r+1)\cdot C-p^*\big(K_X+D+c_1(\sE)\big)$$ to $G:=p^{-1}(F) \cong \mathbb{P}^{r-1} \times F$ is ample.
On the other hand, $C$ is $\psi$-nef since $\sE$ is $f$-numerically flat. By the base-point-free theorem (see \cite[Theorem 3.1.1 and Remark 3.1.2]{kmm}), we conclude that there is a factorization of $\psi$ via a normal variety $Z$
\begin{center}
\begin{tikzcd}[row sep=large, column sep=large]
\mathbb{P}_X(\sE) \ar[r, two heads, "{g}"] \ar[d, "{p}"'] & Z \ar[d, two heads, "{q}"]\\
X  \ar[r, "{f}"'] & Y
\end{tikzcd}
\end{center}
such that $g$ has connected fibers and such that $\sO_{\mathbb{P}_X(\sE)}(1)$ is the pull-back of a $q$-ample line bundle $\sL$ on $Z$.
By construction, the restriction of $\sO_{\mathbb{P}_X(\sE)}(1)$ to $G=\psi^{-1}(F) \cong \mathbb{P}^{r-1} \times F$ is isomorphic to the pull-back of $\sO_{\mathbb{P}^{r-1}}(1)$ to $\mathbb{P}^{r-1} \times F$ via the projection $\mathbb{P}^{r-1} \times F \to \mathbb{P}^{r-1}$. This easily implies that 
$$\dim Z = \dim Y + r-1.$$
Moreover, a general fiber of $q$ is isomorphic to $\mathbb{P}^{r-1}$ and the restriction of $\sL$ to this fiber is isomorphic to 
$\sO_{\mathbb{P}^{r-1}}(1)$.

Next, we show that $q$ is equidimensional of relative dimension $r-1$. We argue by contradiction and assume that there exists a variety $T \subseteq Z$ with $\dim T = r$ and 
$\dim q(T)=0$. Let $F_1$ be an irreducible component of $f^{-1}\big(q(T)\big)$ such that the restriction of $g$ to 
$p^{-1}(F_1)$ induces a surjective morphism $p^{-1}(F_1) \twoheadrightarrow T$. We obtain a diagram as follows:

\begin{center}
\begin{tikzcd}[row sep=large, column sep=large]
p^{-1}(F_1)=\mathbb{P}_{F_1}(\sE_{|F_1}) \ar[r, two heads, "{g_{p^{-1}(F_1)}}"] \ar[d, "{p_{|p^{-1}(F_1)}}"'] & T \\
F_1.  & 
\end{tikzcd}
\end{center}
Since $\sL$ is $q$-ample, we have $(\sL_{|T})^r \neq 0$. Now, by construction, the pull-back of $\sL_{|T}$ to $p^{-1}(F_1)$ identifies with 
the tautological line bundle $\sO_{\mathbb{P}_{F_1}(\sE_{|F_1})}(1)$. On the other hand, $\sE_{|F_1}$ is numerically flat. 
Thus, by \cite[Corollary 1.19]{demailly_peternell_schneider94} applied to the pull-back of $\sE_{|F_1}$ to a resolution of $F_1$ together with the projection formula, we must have $c_1(\sE_{|F_1})\equiv 0$ and  $c_2(\sE_{|F_1})\equiv 0$.
But then \cite[Remark 3.2.4]{fulton} gives $\sO_{\mathbb{P}_{F_1}(\sE_{|F_1})}(1)^r \equiv 0$, yielding a contradiction. This shows that $q$ is equidimensional.
By \cite[Proposition 4.10]{codim_1_del_pezzo_fols}, there is a vector bundle $\sG$ on $Y$ such that 
$$(Z,\sL)\cong \big(\mathbb{P}_Y(\sG),\sO_{\mathbb{P}_Y(\sG)}(1)\big).$$
But then, we must have $\mathbb{P}_X(\sE) \cong X\times_Y \mathbb{P}_Y(\sG)$. This immediately implies that 
$\sE \cong f^*\sG$,
completing the proof of the theorem.
\end{proof}

\begin{proof}[Proof of Theorem \ref{thm:main_descent_intro}] Notice that the locally free sheaf $\sE$ is $f$-numerically flat by Lemma \ref{lemma:R_flat_numerically_flat}.

We prove Theorem \ref{thm:main_descent_intro} by induction on $\dim X - \dim Y$.

If $\dim X = \dim Y$, then Theorem \ref{thm:main_descent_intro} follows from \cite[Theorem 5.1]{kebekus_al_naht}. Suppose from now on that 
$\dim Y < \dim X$.

Then $X$ is uniruled, and thus, we may run a minimal model program for $X$ and end with a Mori fiber space (see \cite[Corollary 1.3.3]{bchm}). There exists a sequence of maps

\begin{center}
\begin{tikzcd}[row sep=large, column sep=large]
X:=X_1 \ar[r, "{\phi_1}", dashrightarrow]\ar[d, "{f_1:=f}"] & \cdots \ar[r, "{\phi_{i-1}}", dashrightarrow] & X_i \ar[r, "{\phi_i}", dashrightarrow]\ar[d, "{f_i}"] & X_{i+1} \ar[r, "{\phi_{i+1}}", dashrightarrow]\ar[d, "{f_{i+1}}"] & \cdots \ar[r, "{\phi_{m-1}}", dashrightarrow] & X_{m} \ar[r, "{\psi}"]\ar[d, "{f_m}"] & X_{m+1}\ar[d, "{f_{m+1}}"]\\
Y \ar[r, equal] & \cdots\ar[r, equal] & Y\ar[r, equal] & Y\ar[r, equal] & \cdots\ar[r, equal] & Y \ar[r, equal] & Y
\end{tikzcd}
\end{center}

\noindent where the $\phi_i$ are either divisorial contractions or flips, and $\psi$ is a Mori fiber space. The spaces $X_i$ are normal, $\mathbb{Q}$-factorial, and $X_i$ has klt singularities for all $1\le i \le m$. Moreover, by \cite[Lemma 5.1.5]{kmm}, $X_{m+1}$ is also $\mathbb{Q}$-factorial. Applying \cite[Corollary 4.6]{fujino99}, we see that $X_{m+1}$ is klt as well. 

We construct smooth projective varieties $Z_i$ inductively for any integer $1 \le i \le m+1$ as follows. Let $Z_{m+1} \to X_{m+1}$ be a resolution of $X_{m+1}$, and let 
$Z_i$ be a resolution of the graph of the rational map $X_i \map Z_{i+1}$ for $1 \le i \le m$. We obtain a commutative diagram as follows:

\begin{center}
\begin{tikzcd}[row sep=large, column sep=large]
Z_1 \ar[r]\ar[d, "{\beta_1}"]\ar[rr, bend left, "{g_i}"']\ar[rrr, bend left, "{g_{i+1}}"']\ar[rrrrr, bend left, "{g_m}"']\ar[rrrrrr, bend left, "{g_{m+1}}"'] & \cdots\ar[r] & Z_i\ar[r]\ar[d, "{\beta_i}"] & Z_{i+1}\ar[r]\ar[d, "{\beta_{i+1}}"] & \cdots\ar[r] & Z_m \ar[r]\ar[d, "{\beta_m}"] & Z_{m+1}\ar[d, "{\beta_{m+1}}"]\\  
X:=X_1 \ar[r, "{\phi_1}", dashrightarrow]\ar[d, "{f_1:=f}"] & \cdots \ar[r, "{\phi_{i-1}}", dashrightarrow] & X_i \ar[r, "{\phi_i}", dashrightarrow]\ar[d, "{f_i}"] & X_{i+1} \ar[r, "{\phi_{i+1}}", dashrightarrow]\ar[d, "{f_{i+1}}"] & \cdots \ar[r, "{\phi_{m-1}}", dashrightarrow] & X_{m} \ar[r, "{\psi}"]\ar[d, "{f_m}"] & X_{m+1}\ar[d, "{f_{m+1}}"]\\
Y \ar[r, equal] & \cdots\ar[r, equal] & Y\ar[r, equal] & Y\ar[r, equal] & \cdots\ar[r, equal] & Y \ar[r, equal] & Y
\end{tikzcd}
\end{center}

Next, we show inductively that there exist locally free and $f_i$-numerically flat sheaves $\sE_i$ on $X_i$ such that 
$\beta_1^*\sE \cong (\beta_i \circ g_i)^*\sE_i$. Set $\sE_1:=\sE$.

Let $1\le i \le m-1$. Suppose first that $\phi_i$ is a divisorial contraction. Then Theorem \ref{thm:descent} shows that there exists a locally free sheaf $\sE_{i+1}$ on $X_{i+1}$ such that $\sE_i \cong \phi_i^*\sE_{i+1}$. Note that $\sE_{i+1}$ is obviously $f_{i+1}$-numerically flat and that  
$\beta_1^*\sE \cong (\beta_{i+1} \circ g_{i+1})^*\sE_{i+1}$.

Suppose now that $\phi_i$ is the flip of a small extremal contraction $c_i\colon X_i \to Y_i$ over $Y$, and let $c_{i+1}\colon X_{i+1} \to Y_i$ be the natural $Y$-morphism. By Theorem \ref{thm:descent}, there exists a locally free sheaf $\sG_i$ on $Y_i$ such that 
$\sE_i\cong c_i^*\sG_i$. Set $\sE_{i+1}:=c_{i+1}^*\sG_i$. Then $\sE_{i+1}$ is $f_{i+1}$-numerically flat and 
$\beta_1^*\sE \cong (\beta_{i+1} \circ g_{i+1})^*\sE_{i+1}$. 

If $i=m$, then Theorem \ref{thm:descent} again shows that there exists a locally free sheaf $\sE_{m+1}$ on $X_{m+1}$ such that $\sE_m \cong \psi^*\sE_{m+1}$. The sheaf $\sE_{m+1}$ is $f_{m+1}$-numerically flat and $\beta_1^*\sE \cong (\beta_{m+1} \circ g_{m+1})^*\sE_{m+1}$.

Now, a general fiber of $f$ is rationally chain connected by assumption and hence rationally connected since it is smooth. This easily implies that the general fibers of $f_{m+1}\circ\beta_{m+1}\colon Z_{m+1} \to Y$ are rationally connected.
The induction hypothesis applied to $\beta_{m+1}^*\sE_{m+1}$ then implies that there exists a locally free sheaf $\sG$ on $Y$ such that $\beta_{m+1}^*\sE_{m+1}\cong (f_{m+1}\circ\beta_{m+1})^*\sG$. One readily checks that $\sE \cong f^*\sG$, completing the proof of the corollary.  
\end{proof}

\section{Preparation for the proof of the main results}

In the section we provide a technical tool for the proof of Theorem \ref{thm:main_intro}. We will prove Theorem \ref{thm:main_intro} by induction on the dimension. The following result will be useful for the induction process.

\begin{prop}\label{prop:main}
Let $X$ be a normal projective variety, and let $D$ be a reduced effective divisor on $X$ such that $(X,D)$ is log canonical and $K_X+D\sim_\mathbb{Q}0$. Suppose that $X$ is $\mathbb{Q}$-factorial with klt singularities. Suppose in addition that there exist a locally free, R-flat sheaf $\sE$ on $X$ and an inclusion $\Omega_X^{[1]}(\textup{log}\,D) \subseteq \sE$ with torsion free cokernel.
Then there exist normal projective varieties $Y$ and $T$ as well as a finite cover $\gamma\colon Y \to X$ 
and a dominant rational map $a\colon Y \map T$ such that the following holds.
\begin{enumerate}
\item The morphism $\gamma$ is quasi-\'etale over $X \setminus \textup{Supp}\, D $. 
\item The variety $T$ is $\mathbb{Q}$-factorial and klt with $K_T\sim_\mathbb{Z}0$.
\item There exist open sets $T^\circ\subseteq T$ and $Y^\circ\subseteq Y$ with complement of codimension at least $2$ such that the map $a$ restricts to a projective morphism with rationally chain connected fibers $a^\circ\colon Y^\circ \to T^\circ$. Moreover, there is no $a$-exceptional divisor on $Y$.
\item There exist a locally free, R-flat sheaf $\sG$ on $T$ and an inclusion $\Omega_T^{[1]}\subseteq \sG$ with torsion free cokernel such that ${\gamma^*\sE}_{|Y^\circ}\cong (a^\circ)^*\sG_{|T^\circ}$.
\end{enumerate}
\end{prop}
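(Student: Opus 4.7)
The plan is to trivialize $K_X+D$ via a quasi-\'etale index-one cover $\gamma \colon Y \to X$ of degree $m$ (the smallest integer with $m(K_X+D) \sim_\mathbb{Z} 0$), then identify $T$ with (a $\mathbb{Q}$-factorial klt model of) the MRC quotient of $Y$, descend $\gamma^*\sE$ along the fibration $a \colon Y \map T$ using Theorem \ref{thm:main_descent_intro}, and verify $K_T \sim_\mathbb{Z} 0$ via the canonical bundle formula of Section 3. By Lemma \ref{lemma:klt_singularities_finite_cover} the divisor $D_Y := \gamma^*(K_X+D) - K_Y$ is reduced and effective with $K_Y + D_Y \sim_\mathbb{Z} 0$, and $Y$ is of klt type; by Lemma \ref{lemma:pull_back_cover} the sheaf $\sF := \gamma^*\sE$ is R-flat and contains $\Omega_Y^{[1]}(\textup{log}\,D_Y)$ with torsion-free cokernel. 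This settles Item (1) and puts $Y$ in the same shape as $X$, but with $K_Y+D_Y$ linearly trivial.

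To build $T$, I would begin with a resolution $Y' \to Y$ and its MRC fibration $\pi \colon Y' \to T'$, then run a relative MMP to contract the divisors of $Y'$ that are exceptional over $Y$ and to reach a $\mathbb{Q}$-factorial klt base $T$. Removing from $T$ the images of the remaining exceptional locus yields open sets $Y^\circ \subseteq Y$ and $T^\circ \subseteq T$ of codimension-two complements, with $a^\circ \colon Y^\circ \to T^\circ$ a projective morphism whose fibers are rationally chain connected, and such that no divisor of $Y$ is $a$-exceptional, giving Item (3). Applying Theorem \ref{thm:main_descent_intro} to $a^\circ$ produces a locally free sheaf $\sG^\circ$ on $T^\circ$ with $\sF|_{Y^\circ} \cong (a^\circ)^*\sG^\circ$, and Hartogs' theorem extends it to a locally free sheaf $\sG$ on $T$. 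R-flatness of $\sG$ follows from Lemma \ref{lemma:R_flat_functoriality} applied to a general fiber's worth of rational curve family mapping to a rational curve on $T$.

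To obtain $K_T \sim_\mathbb{Z} 0$ the key is generic isotriviality of $(a,D_Y)$. On a general fiber $F$ of $a$, the restriction $\Omega_F^1(\textup{log}\,D_Y|_F)$ embeds with torsion-free cokernel in the trivial bundle $\sF|_F$ and is therefore numerically flat, hence trivial because $F$ is rationally connected and any numerically flat bundle on such $F$ is trivial by \cite[Theorem 1.1]{biswas_vb_rc} together with Lemma \ref{lemma:R_flat_numerically_flat}. Dually $T_F(-\textup{log}\,D_Y|_F)$ is trivial, so by Winkelmann's theorem $(F,D_Y|_F)$ is a smooth toric variety with boundary $D_Y|_F$ (the abelian factor being forced to be a point since $F$ is rationally connected). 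The global trivialization of $\sF$ on $Y$ provides compatible trivializations of the log tangent bundles of the fibers, which after a finite base change (as in Remark \ref{remark:isotriviality}) upgrades the fiberwise isomorphism type to genuine isotriviality. Corollary \ref{cor:cbf} then gives $K_T + B \sim_\mathbb{Q} 0$ where $B$ is the discriminant divisor of $(a, D_Y)$; since no divisor of $Y$ is $a$-exceptional and the fibers are smooth and reduced over codimension-one points of $T$, the discriminant vanishes, so $K_T \sim_\mathbb{Q} 0$. Taking a further index-one cover of $T$ (which, being quasi-\'etale, pulls back harmlessly through $a$) delivers $K_T \sim_\mathbb{Z} 0$, establishing Item (2). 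Finally, Item (4) follows by combining Lemma \ref{lemma:pull_back_fibration} (which gives $\Omega_T^{[1]} \hookrightarrow (a^\circ_*\Omega_{Y^\circ}^{1}(\textup{log}\,D_Y|_{Y^\circ}))^{**}$) with $\sF|_{Y^\circ} \cong (a^\circ)^*\sG^\circ$ via adjunction, and checking that the cokernel is torsion-free using the toric description of the fibers.

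The main obstacle will be the generic isotriviality argument in the third paragraph: passing from the pointwise toric structure on fibers plus a common trivialization of the ambient bundle $\sF$ to an honest isotriviality after a finite base change requires identifying the automorphism-group action precisely and constructing an appropriate cover of $T$ that rigidifies the fibers. A secondary difficulty is keeping the MMP under control so that $T$ remains $\mathbb{Q}$-factorial klt while no divisor of $Y$ becomes $a$-exceptional; this may require careful interplay between birational models of $Y$ and of $T$ rather than running the MMP on $Y$ directly.
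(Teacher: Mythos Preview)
Your approach is structurally different from the paper's and contains genuine gaps that I do not see how to close without essentially reverting to the paper's strategy.

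The paper does \emph{not} go directly to the MRC quotient. It argues by induction on $\dim X$: if $D\neq 0$ then $X$ is uniruled, so one runs an MMP on $X$ itself (not on a resolution) ending in a Mori fiber space $\psi\colon X_m\to Y$. At each divisorial contraction or flip, Theorem~\ref{thm:descent} descends $\sE$ to a locally free R-flat sheaf $\sE_i$ on $X_i$, and Lemma~\ref{lemma:saturated} preserves the saturated inclusion $\Omega_{X_i}^{[1]}(\log D_i)\subseteq\sE_i$. The Mori fiber space is then analyzed by Lemma~\ref{lemma:fibration}: there the relative Picard rank one structure, together with the surjection $\sE_m^*\to T_{X_m}\to\psi^*T_Y$, is what forces the fibration to be locally trivial (via flows of vector fields) and permits an explicit computation of the discriminant. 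A suitable cyclic cover of $Y$ kills the multiple fibers, after which one takes a $\mathbb{Q}$-factorialization $Y_2$ and applies the induction hypothesis to $(Y_2,B_2)$. The finite cover $\gamma$ in the statement is the composite of all these covers, and the absence of $a$-exceptional divisors is deduced from Lemma~\ref{lemma:divisorial} (exceptional divisors of the MMP lie in $\textup{Supp}\,D$) together with Proposition~\ref{prop:zhang} (components of $D$ dominate the MRC base).

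Your direct-MRC plan runs into trouble at exactly the points where the paper exploits the Mori fiber space structure. First, Theorem~\ref{thm:main_descent_intro} requires the source to be \emph{smooth}, but your $Y^\circ$ is only of klt type; working instead on the resolution $Y'$ would force you to track the exceptional divisors of $Y'\to Y$, which is precisely what the paper's MMP handles. Second, your claims that no divisor of $Y$ is $a$-exceptional and that the discriminant vanishes are unjustified: without the relative Picard rank one of a Mori fiber space you have no control over vertical divisors or multiple fibers, and the paper needs Lemma~\ref{lemma:divisorial} and a careful cover (Lemma~\ref{lemma:fibration}) to achieve this. Third, your isotriviality sketch does not work as written: the inclusion $\Omega^1_F(\log D_Y|_F)\subseteq\sF|_F$ with torsion-free cokernel does not by itself force numerical flatness (the cokernel need not be locally free), and ``compatible trivializations of $\sF$'' does not exist since $\sF$ is only R-flat, not globally trivial. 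The paper's isotriviality comes instead from the splitting $T_{Y_1^\circ}\hookrightarrow\sG_1^*$ produced by the Mori fiber space analysis, which your setup does not provide.
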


We will need the following easy observation.

\begin{lemma}\label{lemma:saturated}
Let $X$ be a normal variety and let $\sE$ be a locally free sheaf on $X$. Let $\sG \subseteq \sE$ be a reflexive subsheaf. If the quotient sheaf $\sE/\sG$ is torsion free in codimension $1$, then $\sG$ is saturated in $\sE$.
\end{lemma}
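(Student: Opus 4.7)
The plan is to compare $\sG$ with its saturation $\sG^{\textup{sat}}$ inside $\sE$ (the preimage under $\sE \to \sE/\sG$ of the torsion subsheaf of $\sE/\sG$) and show that the hypothesis forces $\sG=\sG^{\textup{sat}}$. By construction the inclusion $\sG \subseteq \sG^{\textup{sat}} \subseteq \sE$ has $\sG^{\textup{sat}}/\sG$ equal to the torsion of $\sE/\sG$, and by assumption this torsion is supported on a closed subset $Z \subseteq X$ of codimension at least $2$.

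Next I would observe that $\sG^{\textup{sat}}$ is reflexive: it is a subsheaf of the locally free sheaf $\sE$ with torsion-free quotient, and on a normal variety such a subsheaf is automatically $S_2$, hence reflexive (this is standard, e.g. the same argument that shows kernels of morphisms between locally free sheaves are reflexive). Thus I would have two reflexive, hence $S_2$, coherent sheaves $\sG \subseteq \sG^{\textup{sat}}$ that agree on the open set $X \setminus Z$.

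The final step uses the characterization of reflexiveness by the $S_2$ property: for any open $U \subseteq X$, sections of an $S_2$ sheaf extend uniquely across $Z \cap U$, so
\[
\sG(U) \;=\; \sG(U \setminus Z) \;=\; \sG^{\textup{sat}}(U \setminus Z) \;=\; \sG^{\textup{sat}}(U).
\]
Hence $\sG = \sG^{\textup{sat}}$ as subsheaves of $\sE$, and therefore $\sE/\sG = \sE/\sG^{\textup{sat}}$ is torsion free, which is exactly the statement that $\sG$ is saturated.

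There is no real obstacle here; the only point demanding care is the reflexiveness of $\sG^{\textup{sat}}$, for which one invokes normality of $X$ and local freeness of $\sE$ to apply the standard $S_2$-extension criterion. All other steps are formal.
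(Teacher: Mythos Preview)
Your proof is correct and follows essentially the same approach as the paper: both arguments show that the saturation $\sG^{\textup{sat}}$ is reflexive (the paper cites \cite[Proposition~1.1]{hartshorne80}, while you unpack the $S_2$ argument), observe that $\sG$ and $\sG^{\textup{sat}}$ agree outside a codimension~$\ge 2$ subset, and conclude equality from the extension property of reflexive sheaves (the paper cites \cite[Proposition~1.6]{hartshorne80}).
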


\begin{proof}
By \cite[Proposition 1.1]{hartshorne80}, the saturation $\sG_1$ of $\sG$ in $\sE$ is reflexive. On the other hand, by assumption, $\sG$ and $\sG_1$ agree in codimension $1$. But this immediately implies that $\sG=\sG_1 \subseteq \sE$ (see \cite[Proposition 1.6]{hartshorne80}). 
\end{proof}

Before we give the proof of Proposition \ref{prop:main}, we need the following auxiliary results. 

\begin{lemma}\label{lemma:divisorial}
Let $X$ be a normal projective variety, and let $D$ be a reduced effective divisor on $X$. Suppose that $X$ is $\mathbb{Q}$-factorial with klt singularities. Let $\phi \colon X \to X_1$ be a divisorial Mori contraction with exceptional divisor $E$. Suppose furthermore that there exist a locally free, R-flat sheaf $\sE$ on $X$ and an inclusion $\Omega_X^{[1]}(\textup{log}\,D) \subseteq \sE$ with torsion free cokernel.
Then $E$ is contained in the support of $D$. 
\end{lemma}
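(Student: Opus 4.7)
The plan is to argue by contradiction. Assume $E \not\subseteq \textup{Supp}\,D$; I would exhibit a rational curve $C \subset E$ along which R-flatness forces $\nu^{*}T_X$ to be globally generated, while the negativity of the normal bundle of $E$ along contracted curves forces the opposite.

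First I would select $C$ with care. Since $\phi$ is a Mori divisorial contraction, the exceptional divisor $E$ is covered by a family of rational curves contracted by $\phi$, and any such curve satisfies $E \cdot C < 0$ by extremality. Setting $\sF := \sE/\Omega_X^{[1]}(\textup{log}\,D)$ (torsion free by hypothesis, hence locally free outside a closed subset of codimension at least $2$ in $X_{\textup{reg}}$), the bad loci to avoid in $E$ are: $E \cap \textup{Supp}\,D$ (proper in $E$ by the contradiction hypothesis), $E \cap (X \setminus X_{\textup{reg}})$, the singular locus of $E$, and the non-locally-free locus of $\sF$. Each is a proper closed subset of $E$, so a general member $C$ of the covering family avoids them all. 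Let $\nu\colon \mathbb{P}^1 \to X$ be its normalization, and let $V \subset X$ be a Zariski open neighborhood of $\nu(\mathbb{P}^1)$ with $V \subseteq X_{\textup{reg}}$, $V \cap \textup{Supp}\,D = \emptyset$, $V$ meeting $E$ only along its smooth locus, and $\sF|_V$ locally free.

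Then on $V$ one has $\Omega_X^{[1]}(\textup{log}\,D)|_V = \Omega_V^{1}$, and dualizing the short exact sequence $0 \to \Omega_V^{1} \to \sE|_V \to \sF|_V \to 0$ of locally free sheaves yields a surjection $(\sE|_V)^{*} \twoheadrightarrow T_V$. Pulling back by $\nu$ and invoking R-flatness ($\nu^{*}\sE^{*} \cong \sO_{\mathbb{P}^1}^{\oplus r}$ with $r = \textup{rank}\,\sE$) produces a surjection $\sO_{\mathbb{P}^1}^{\oplus r} \twoheadrightarrow \nu^{*}T_X$, so $\nu^{*}T_X$ is globally generated. Finally, since $\nu(\mathbb{P}^1)$ lies in the smooth loci of both $X$ and $E$, and $X$ is $\mathbb{Q}$-factorial so that $E$ is Cartier at these points, the normal bundle sequence $0 \to T_E \to T_X|_E \to N_{E/X} \to 0$ is exact along $\nu(\mathbb{P}^1)$, and pullback exhibits $\nu^{*}N_{E/X}$ as a quotient of the trivial bundle, hence of nonnegative degree. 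But $\deg \nu^{*}N_{E/X} = E \cdot C < 0$, the desired contradiction.

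The most delicate step is the first one: arranging that a general rational curve of the covering family of $E$ simultaneously avoids each of the closed loci above. This is essentially a codimension count that crucially relies on the hypothesis $E \not\subseteq \textup{Supp}\,D$ to ensure that $E \cap \textup{Supp}\,D$ is proper in $E$ — precisely the statement to be contradicted, which closes the argument.
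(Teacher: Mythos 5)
Your overall strategy --- produce a contracted rational curve $C\subseteq E$ with $E\cdot C<0$ and play the triviality of $\nu^*\sE^*$ against the negativity of the normal bundle of $E$ --- is exactly the strategy of the paper's proof. The genuine gap is in your first step. A general member of a covering family of contracted rational curves of $E$ need \emph{not} avoid a prescribed proper closed subset $Z\subsetneq E$; a codimension count only shows that the general member is not \emph{contained} in $Z$. If $Z$ dominates $\phi(E)$ --- for instance if $\textup{Supp}\,D\cap E$, or $\textup{Sing}\,X\cap E$, is a multisection of $\phi|_E\colon E\to\phi(E)$, as in $E\cong\mathbb{P}^1\times B\to B$ with $Z$ a section --- then \emph{every} contracted rational curve in $E$ meets $Z$, and the open neighbourhood $V$ you require does not exist. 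Nothing in the hypotheses rules this out, so the argument as written does not go through.

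The repair is to weaken what you ask of $C$ and of the maps. Your final contradiction only needs the composed map $\sO_{\mathbb{P}^1}^{\oplus r}\cong\nu^*\sE^*\to\nu^*N_{E/X}$ to be \emph{nonzero}, since a nonzero map from a trivial bundle to a line bundle of negative degree on $\mathbb{P}^1$ is already impossible; hence generic surjectivity along $C$ suffices, and that only requires $C$ not to be \emph{contained} in the bad loci, which a general member of the covering family does satisfy. This is how the paper argues. It moreover passes to a canonical resolution $\beta\colon Y\to X$ of $(X,D)$ and works with the strict transform $F$ of $E$, which is covered by rational curves $C$ contracted by $\phi\circ\beta$ with $F\cdot C<0$ by \cite[Lemma 3.6.2]{bchm}; this also disposes of the second difficulty in your argument, namely that $C$ may pass through $\textup{Sing}\,X$, where $T_X$ fails to be locally free and the conormal sequence for $E$ is unavailable. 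On $Y$ the composition
$${\beta^*\sE^*}_{|C}\to {T_Y(-\textup{log}\,B)}_{|C}\to {\sN_{F/Y}}_{|C}\cong{\sO_Y(-F)}_{|C}$$
is a chain of maps of locally free sheaves that is generically surjective, and this yields the contradiction cleanly.
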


\begin{proof}Recall that $E$ is an irreducible divisor since $X$ is $\mathbb{Q}$-factorial. We argue by contradiction and assume that $E$ is not contained in the support of $D$. By Theorem \ref{thm:descent}, there exists a locally free sheaf $\sE_1$ on $X_1$ such that $\sE\cong \phi^*\sE_1$. Let $\beta\colon Y\to X$ be a canonical resolution of $(X,D)$ and let $B$ be the largest reduced divisor contained in $\beta^{-1}\big(\textup{Supp}\,D\big)$. Let $F$ be the strict transform of $E$ in $Y$. By \cite[Lemma 3.6.2]{bchm}, 
$F$ is covered by curves $C$ contracted by $\phi\circ\beta$ such that $F\cdot C<0$. Suppose that $C \not\subset \textup{Supp}\, B$.
Then the composed map of locally free sheaves
$${\beta^*\sE^*}_{|C} \to {\beta^*T_X(-\textup{log}\,D)}_{|C} \cong {\beta^*\big(\beta_*T_Y(-\textup{log}\,B)\big)}_{|C} \to {T_Y(-\textup{log}\,B)}_{|C} \to {\sN_{F/Y}}_{|C}\cong{\sO_Y(-F)}_{|C}$$
is generically surjective. But ${\beta^*\sE^*}_{|C}$ is the trivial vector bundle while $F\cdot C <0$ by choice of $C$.
This yields a contradiction and shows that $E$ is contained in the support of $D$, completing the proof of the lemma.
\end{proof}

\begin{lemma}\label{lemma:fibration}
Let $X$ be a normal projective variety, and let $D$ be a reduced effective divisor on $X$ such that 
$(X,D)$ is log canonical and $K_X+D\sim_\mathbb{Q}0$. Suppose that $X$ is $\mathbb{Q}$-factorial with klt singularities.
Suppose in addition that there exist a locally free, R-flat sheaf $\sE$ on $X$ and an inclusion $\Omega_X^{[1]}(\textup{log}\,D) \subseteq \sE$ with torsion free cokernel.
Let $\psi \colon X \to Y$ be a Mori fiber space. 
Write $D=R+S$ where any irreducible component of $R$ (resp. $S$) maps onto $Y$ (resp. a proper subset of $Y$) and let $B$ be the $\mathbb{Q}$-divisor on $Y$ such that $S=\psi^* B$. Then there exists a finite cover $\tau \colon Y_1 \to Y$ such that the following holds. Let $X_1$ be the normalization of the product $Y_1\times_Y X$ with natural morphisms $\psi_1\colon X_1 \to Y_1$ and $\tau_1 \colon X_1 \to X$. Set $D_1:=\tau_1^*(K_X+D)-K_{X_1}$.
\begin{enumerate}
\item The morphism $\tau_1$ is quasi-\'etale.
\item Set $B_1:=\ulcorner\tau^* B\urcorner$. Then $B_1$ is the discriminant divisor of $(\psi_1,D_1)$. Moreover, $B_1$ is reduced and $K_{Y_1}+B_1\sim_\mathbb{Q}0$.
\item The pair $(Y_1,B_1)$ is log canonical and $Y_1$ is of klt type.
\item There exists a locally free, R-flat sheaf $\sG_1$ on $Y_1$ such that $\psi_1^*\sG_1\cong \tau_1^*\sE$, and an inclusion 
$\Omega_{Y_1}^{[1]}(\textup{log}\,B_1) \subseteq \sG_1$ with torsion free cokernel. 
\end{enumerate}
\end{lemma}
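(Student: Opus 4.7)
My plan has four main stages.

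First, I would descend $\sE$ to $Y$ and fix the cover. Since $\psi$ is a Mori fibre space with $X$ klt and $-K_X$ $\psi$-ample, Theorem \ref{thm:descent} (applied with trivial boundary) yields a locally free sheaf $\sG$ on $Y$ with $\sE \cong \psi^{*}\sG$, and Lemma \ref{lemma:R_flat_functoriality} then shows that $\sG$ is R-flat. Because $\rho(X/Y)=1$, for each prime $P \subset \psi(\textup{Supp}\,S)$ the pullback $\psi^{*}P$ has the form $m_P Q_P$ for a unique prime $Q_P \subset S$ and integer $m_P \ge 1$, so that $B = \sum(1/m_P) P$ and $S = \sum Q_P$. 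I take $\tau \colon Y_1 \to Y$ to be a Kawamata-type finite Galois cover ramifying exactly to order $m_P$ at each prime over $P \in \textup{Supp}\, B$ and \'etale in codimension one elsewhere, so $B_1 := \tau^{*}B = \ulcorner \tau^{*}B \urcorner$ is reduced. Locally at each $Q_P$ the fibre product $Y_1 \times_Y X$ is modelled by $\textup{Spec}\,\mathbb{C}[s,t]/(s^{m_P} - t^{m_P})$, which after normalisation splits into $m_P$ reduced components each mapping isomorphically to $X$; hence $\tau_1$ is \'etale in codimension one, proving (1), and then $K_{X_1}=\tau_1^{*}K_X$ gives $D_1=\tau_1^{*}D$ reduced and effective with $(X_1,D_1)$ log canonical and $K_{X_1}+D_1 \sim_{\bQ} 0$.

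Second, I tackle (2) by a canonical bundle formula. A general fibre $F$ of $\psi$ is a smooth klt Fano with $K_F + D_{|F} \sim_{\bQ} 0$, and restricting the given inclusion $\Omega_X^{[1]}(\textup{log}\,D) \subseteq \psi^{*}\sG$ to $F$ yields $\Omega_F^{1}(\textup{log}\, D_{|F}) \hookrightarrow \sO_F^{\oplus r}$ with locally free cokernel. Dualising exhibits $T_F(-\textup{log}\, D_{|F})$ as a quotient of a trivial bundle whose determinant is numerically trivial; the restriction to any rational curve is then non-negative of total degree zero, hence trivial, so $T_F(-\textup{log}\, D_{|F})$ is R-flat. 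The inductive case of Theorem \ref{thm:main_intro} (in its general lc form) applied to $(F, D_{|F})$ then forces $F$ to be a toric variety with boundary $D_{|F}$, since the K-trivial base of a Fano variety is a point. Toric rigidity ($H^1(F, T_F(-\textup{log}\, D_{|F}))=0$) yields generic isotriviality of $(\psi,D)$, and Corollary \ref{cor:cbf} gives $K_Y + B_Y \sim_{\bQ} 0$ for the discriminant $B_Y$. The resulting analytic local product structure $\psi^{-1}(U) \cong U \times F$ over $Y \setminus \textup{Supp}\, B$ rules out multiple fibres off $\textup{Supp}\, B$, so $B_Y = \textup{Supp}\, B$ (reduced). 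Fact \ref{fact:discriminant_finite_base_change} then shows $B_1$ is the discriminant of $(\psi_1, D_1)$ and $K_{Y_1}+B_1 \sim_{\bQ} 0$, proving (2).

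Third, I address (3) and (4). For (3), $R$ is $\psi$-ample because $-K_X \sim_{\bQ} D$ and $S$ is $\psi$-numerically trivial; hence $(X, (1-\epsi)D)$ is klt with $-(K_X + (1-\epsi)D) = \epsi D$ $\psi$-ample, and Lemma \ref{lemma:cbf_2} gives $(Y, B_{Y,\epsi})$ klt. Pulling back via $\tau$ using Fact \ref{fact:discriminant_finite_base_change} together with Lemma \ref{lemma:klt_singularities_finite_cover} produces a klt pair $(Y_1, B_{Y_1,\epsi})$ whose boundary tends to $B_1$ as $\epsi \to 0$, so $(Y_1, B_1)$ is log canonical and $Y_1$ is of klt type. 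For (4), set $\sG_1 := \tau^{*}\sG$: this is R-flat, since any morphism $\mathbb{P}^1 \to Y_1$ composes with $\tau$ to a morphism $\mathbb{P}^1 \to Y$, and flat base change gives $\psi_1^{*}\sG_1 \cong \tau_1^{*}\sE$. Pulling the given inclusion back via $\tau_1^{[*]}$ and applying Lemma \ref{lemma:pull_back_cover}(2) (valid since $\tau_1$ is quasi-\'etale with $D_1 = \tau_1^{*}(K_X+D) - K_{X_1}$) yields $\Omega_{X_1}^{[1]}(\textup{log}\,D_1) \hookrightarrow \psi_1^{*}\sG_1$ with torsion-free cokernel; composing with the map of Lemma \ref{lemma:pull_back_fibration} and the natural isomorphism $((\psi_1)_{*}\psi_1^{*}\sG_1)^{**} \cong \sG_1$ produces the required $\Omega_{Y_1}^{[1]}(\textup{log}\,B_1) \hookrightarrow \sG_1$, whose cokernel is torsion-free by \'etale descent from $X_1$ in codimension one combined with Lemma \ref{lemma:saturated}.

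The most delicate step is the second: establishing generic isotriviality of $(\psi,D)$ and identifying the discriminant $B_Y$ with the reduced divisor $\textup{Supp}\, B$. Both ingredients depend essentially on R-flatness of $\sE$ — first through the Winkelmann/toric classification of the general fibre fed by the inductive Theorem \ref{thm:main_intro}, and then through the absence of multiple fibres outside $\textup{Supp}\, B$ that follows from the resulting analytic local product structure.
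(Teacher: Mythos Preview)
Your Stage~2 contains a genuine gap. You assert that generic isotriviality plus toric rigidity yields an analytic product structure over all of $Y \setminus \textup{Supp}\, B$, thereby ruling out multiple fibres there; but rigidity of the general fibre only controls \emph{smooth} nearby deformations and says nothing about non-reduced fibres over codimension-one loci. Generic isotriviality gives the product structure over a dense open set, and nothing in your argument promotes it to all of $Y \setminus \textup{Supp}\, B$. The paper does not attempt to rule such fibres out: it computes the discriminant directly as $B_Y = \ulcorner B \urcorner + \sum_{P \not\subset \textup{Supp}\, B}\frac{m_P-1}{m_P}\, P$, allowing multiplicities $m_P > 1$ outside $\textup{Supp}\, B$. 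The key local step is, over each prime $P \not\subset \textup{Supp}\, B$, to take a cyclic cover of order $m_P$ so that the base-changed fibration has reduced fibres; then the saturated inclusion $\Omega^{[1]}_{X_1^\circ}(\textup{log}\,D_1^\circ) \subset \sE_1^\circ \cong (\psi_1^\circ)^*\sG_1^\circ$ produces a surjection $(\sG_1^\circ)^* \twoheadrightarrow T_{Y_1^\circ}$, hence a local splitting $(\psi_1^\circ)^* T_{Y_1^\circ} \subset T_{X_1^\circ}(-\textup{log}\,D_1^\circ)$, and local triviality follows from the existence of flows of holomorphic vector fields. This single argument establishes generic isotriviality (so the canonical bundle formula applies) and the discriminant formula at once, with no inductive appeal to the main theorem on the fibre.

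As a consequence, your choice of cover is the wrong one. The paper takes $\tau$ to be the index-one cover attached to $K_Y + B_Y$, which is \'etale over $\textup{Supp}\, B$ and ramifies to order $m_P$ over each prime $P \not\subset \textup{Supp}\, B$ carrying a multiple fibre; matching the ramification of $\tau$ to the fibre multiplicities is exactly what makes $\tau_1$ quasi-\'etale \emph{and} forces the discriminant of $(\psi_1, D_1)$ to be the reduced divisor $B_1 = \ulcorner \tau^* B \urcorner$. Your Kawamata cover, ramified instead over $\textup{Supp}\, B$ and \'etale elsewhere, leaves any multiple fibres outside $\textup{Supp}\, B$ untouched, so the discriminant of $(\psi_1, D_1)$ would acquire extra terms $\frac{m_P-1}{m_P}$ over such primes and item~(2) would fail. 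Two smaller points: restricting $\Omega_X^{[1]}(\textup{log}\,D) \subseteq \psi^*\sG$ to $F$ yields $\Omega_X^{[1]}(\textup{log}\,D)_{|F}$, not $\Omega_F^1(\textup{log}\,D_{|F})$, so you need the conormal sequence before your R-flatness argument on the fibre goes through; and invoking the main theorem inductively on the fibre makes the lemma no longer self-contained, a dependency the paper's flow argument avoids entirely.
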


\begin{rem}
The existence of $B$ in the statement of Lemma \ref{lemma:fibration} is guaranteed by [Lemma 3.2.5 (2)]\cite{kmm}.
\end{rem}

\begin{proof}[Proof of Lemma \ref{lemma:fibration}]
Since $\psi$ is a Mori fiber space and $X$ is $\mathbb{Q}$-factorial by assumption, for any prime divisor $P$ on $Y$, $\textup{Supp}\,\psi^*(P)$ is irreducible. In particular, for any prime divisor $Q$ on $X$, $\psi(Q)$ has codimension at most $1$. In other words, there is no $\psi$-exceptional divisor on $X$.

By \cite[Theorem 1]{zhang_rcc}, the general fibers of $\psi$ are rationally connected. This in turn implies that any fiber of $\psi$ is rationally chain connected. Therefore, by Theorem \ref{thm:descent} applied to $\psi$, there exists a vector bundle $\sG$ on $Y$ such that $\sE\cong\psi^*\sG$. 
Notice that $\sG$ is R-flat by Lemma \ref{lemma:R_flat_functoriality}.

\medskip

We first compute the discriminant divisor of $(\psi,D)$. Let $P$ be a prime divisor on $Y$ which is not contained in the support of $B$ and write $\psi^*P=m Q$ where $m$ is a positive integer and $Q$ is a prime divisor. 
Let $Y^\circ \subseteq Y_{\textup{reg}}\setminus \textup{Supp}\, B$ be an open set such that $Y^\circ \cap P \neq \emptyset$, and 
set $X^\circ:=\psi^{-1}(Y^\circ)$. We may assume without loss of generality that there exists a finite morphism $\tau^\circ \colon Y_1^\circ \to Y^\circ$ with $Y_1^\circ$ smooth such that $\tau^\circ$ branches only over $P \cap Y^\circ$ with ramification index $m$. Let $X_1^\circ$ denotes the normalization of the fiber product $Y_1^\circ \times_Y X$ with natural morphisms $\psi_1^\circ\colon X_1^\circ \to Y_1^\circ$ and $\tau_1^\circ\colon X_1^\circ \to \psi^{-1}(Y^\circ)=:X^\circ$. Then $\tau_1^\circ$ is a quasi-\'etale morphism. Moreover, shrinking $Y^\circ$, if necessary, we may assume that $\psi_1^\circ$ has reduced fibers. 
Set $D_1^\circ:=(\tau_1^\circ)^*D_{|X^\circ}$, $\sE_1^\circ:=(\tau_1^\circ)^*\sE_{|X^\circ}$ and $\sG_1^\circ:=(\tau^\circ)^*\sG_{|X^\circ}$. Observe now (see Lemma \ref{lemma:pull_back_cover}) that
$$\Omega_{X_1^\circ}^{[1]}(\textup{log}\,D_1^\circ) \cong (\tau_1^\circ)^*\Omega_{X^\circ}^{[1]}(\textup{log}\,D_{|X^\circ}).$$
By Lemma \ref{lemma:saturated}, we have an inclusion  $\Omega_{X_1^\circ}^{[1]}(\textup{log}\,D_1^\circ) \subseteq \sE_1^\circ$ with torsion free cokernel. 
Notice that any irreducible component of $D_1^\circ$
dominates $Y_1^\circ$ by construction. Therefore, shrinking $Y^\circ$ again, if necessary, we may assume that the locus where the composed map
$$(\psi_1^\circ)^*(\sG_1^\circ)^*\cong(\sE_1^\circ)^*\to T_{X_1^\circ}(-\textup{log}\,D_1^\circ) \to T_{X_1^\circ} \to  (\psi_1^\circ)^* T_{Y_1^\circ}$$
is not surjective does not contain any fiber of $\psi_1$. This easily implies that the induced map of locally free sheaves $(\sE_1^\circ)^* \to T_{Y_1^\circ}$ is surjective.
Shrinking $Y^\circ$ further, we may assume that
$\sE^\circ$ is trivial and that $(\psi_1^\circ)^* T_{Y_1^\circ}$ is a direct summand of $T_{X_1^\circ}(-\textup{log}\,D_1^\circ)$.  
A classical result of complex analysis says that complex flows of vector fields on analytic spaces exist. This implies that the fibration
$(X_1^\circ,D_1^\circ)\to Y_1^\circ$ is locally trivial for the analytic topology.
One then readily checks that the discriminant divisor $B_Y$ of $(\psi,D)$ is
$$B_Y=\ulcorner B \urcorner +\sum \frac{m_P-1}{m_P}P$$ 
where $P$ runs through all prime divisors on $Y$ not contained in the support of $B$, and $m_P$ denotes the multiplicity of $\psi^*P$ along $\textup{Supp}\,\psi^*(P)$.

\medskip

By Corollary \ref{cor:cbf} applied to $\psi$, we must have $K_Y+B_Y \sim_\mathbb{Q} 0$. Let $\tau\colon Y_1 \to Y$ be the corresponding index one cover (see \cite[Section 2.4]{shokurov_log_flips}). 
Let $X_1$ be the normalization of the product $Y_1\times_Y X$ with natural morphisms $\psi_1 \colon X_1 \to Y_1$ and $\tau_1 \colon X_1 \to X$.
By construction, $\tau$ is \'etale at the generic points of $\textup{Supp}\, B$. Moreover, 
if $P$ is a prime divisor on $Y$ which is not contained in the support of $B$, 
then $\tau$ has ramification index $m_P$ along any irreducible component of $\tau^{-1}(P)$. 
This easily implies that $\tau_1$ is quasi-\'etale. 
Set $\sE_1:=\tau_1^*\sE$ and $D_1:=\tau_1^*D$. Notice that $\sE_1$ is R-flat and that we have an inclusion
$\Omega_{X_1}^{[1]}(\textup{log}\,D_1)\cong\tau^{[*]}\Omega_X^{[1]}(\textup{log}\,D) \subseteq \sE_1=\tau_1^*\sE$
with torsion free cokernel (see Lemma \ref{lemma:saturated}).
Notice also that the fibration $(\psi_1,D_1)$ is generically isotrivial.
Moreover, arguing as in the previous paragraph, we see that its discriminant is the reduced divisor $B_1=\ulcorner\tau^* B\urcorner$. By construction, we have $K_{Y_1}+B_1\sim_\mathbb{Q}0$. This proves Items (1) and (2).

\medskip

To prove Item (3), recall that $B_1$ is the discriminant of $(\psi_1,D_1)$. Set $D_i:=(1-\frac{1}{i})D$ and
$D_{1,i}:=\tau_1^*D_i$ for any integer $i\ge 1$. Notice that $D_{1,i}$ is effective.
Observe also that $(X,D_i)$ is klt for $i\ge 1$ since $X$ is $\mathbb{Q}$-factorial and klt and $(X,D)$ is log canonical by assumption. 
It follows that $(X_1,D_{1,i})$ is klt. 
Let $B_i$ (resp. $B_{1,i}$) be the discriminant divisor of $(\psi,D_i)$
(resp. $(\psi_1,D_{1,i})$). The $\mathbb{Q}$-divisor $K_Y+B_i$ is $\mathbb{Q}$-Cartier since $Y$ is $\mathbb{Q}$-factorial by \cite[Lemma 5.1.5]{kmm}.
By Fact \ref{fact:discriminant_finite_base_change}, we have $K_{Y_1}+B_{1,i} \sim_\mathbb{Q} \tau^*(K_Y+B_i)$ so that $K_{Y_1}+B_{1,i}$ is $\mathbb{Q}$-Cartier as well. On the other hand, since $K_X+D \equiv 0$ and $-K_X$ is $\psi$-ample, we see that $-(K_{X_1}+D_{1,i})$ is $\psi_1$-ample. 
By Lemma \ref{lemma:cbf_2} applied to $(\psi_1,D_{1,i})$, we conclude that $(Y_1,B_{1,i})$ is klt. 
Clearly, $B_{1,i} \le B_1$ and $B_{1,i} \to B_1$ as $i \to +\infty$. This proves Item (3). 

\medskip

Set $\sG_1:=\tau^*\sG$.
Note that $\sG_1$ is R-flat. By Lemma \ref{lemma:pull_back_fibration},
the standard pull-back map of K\"ahler differential induces an injective map
$$\Omega_{Y_1}^{[1]}(\textup{log}\,B_1) \to \big((\psi_1)_* \Omega_{X_1}^{[1]}(\textup{log}\,D_1)\big)^{**}\cong\sG_1.$$
By construction, $\psi_1$ has reduced fibers on some open set in $Y_1 \setminus \textup{Supp}\, B_1$ with complement of codimension at least $2$.
An easy local computation (see \cite[Chapitre 2, Proposition 3.2]{deligne_equ_diff}) then shows that
the cokernel of the inclusion $\Omega_{Y_1}^{[1]}(\textup{log}\,B_1) \subset \sG_1$ is torsion free in codimension $1$, and hence torsion free by Lemma \ref{lemma:saturated}.
This finishes the proof of the lemma
\end{proof} 

We are now in position to prove Proposition \ref{prop:main}.

\begin{proof}[Proof of Proposition \ref{prop:main}] We prove Proposition \ref{prop:main} by induction on $\dim X$.
 
\medskip 
 
If $\dim X =1$, then either $(X,D)\cong \big(\mathbb{P}^1,[0]+[\infty]\big)$, or $X$ is a Riemann surface of genus $1$ and 
$D=0$. So the statement holds true in this case.

\medskip

Suppose from now on that $\dim X \ge 2$.

\medskip

If $D=0$, then $K_X$ is torsion by assumption. Let $X_1 \to X$ be the corresponding index one cover, which is quasi-\'etale (\cite[Lemma 2.53]{kollar_mori}). Then $X_1$ is klt and $K_{X_1}\sim_\mathbb{Z}0$ by construction. So the statement holds true in this case.

\medskip

Suppose from now on that $D \neq 0$. Then $X$ is uniruled by \cite{miyaoka_mori_uniruledness} applied to a resolution of $(X,D)$.
Thus, we may run a minimal model program for $X$ and end with a Mori fiber space (see \cite[Corollary 1.3.3]{bchm}). There exists a sequence of maps

\begin{center}
\begin{tikzcd}[row sep=large, column sep=large]
X:=X_{1} \ar[r, "{\phi_{1}}", dashrightarrow] & \cdots \ar[r, "{\phi_{i-1}}", dashrightarrow] & X_{i} \ar[r, "{\phi_{i}}", dashrightarrow] & X_{i+1} \ar[r, "{\phi_{i+1}}", dashrightarrow] & \cdots \ar[r, "{\phi_{m-1}}", dashrightarrow] & X_{m} \ar[r, "{\psi}"] & Y
\end{tikzcd}
\end{center}

\noindent where the $\phi_{i}$ are either divisorial contractions or flips, and $\psi$ is a Mori fiber space. The spaces $X_{i}$ are normal, $\mathbb{Q}$-factorial, and $X_{i}$ has klt singularities for all $1\le i \le m$. 
Let $D_{i}$ be the push-forward of $D$ on $X_{i}$. 
By \cite[Lemma 3.2.5 (2)]{kmm}, we have $K_{X_{i}}+D_{i}\sim_\mathbb{Q} 0$. Moreover, the pair $(X_{i},D_{i})$ is log canonical by \cite[Lemma 3.38]{kollar_mori}. 

We define projective varieties $W_i$ inductively for any integer $1 \le i \le m$ as follows. Set $W_m:=X_m$.  
Let $W_i$ be a resolution of the graph of the rational map $X_i \map W_{i+1}$ for $1 \le i \le m-1$ with morphism $p_i\colon W_i \to X_i$. Let $q_i\colon W_1 \to W_i$ be the natural morphism. 
We then show inductively that there exist locally free, R-flat sheaves $\sE_i$ on $X_i$ and an inclusion 
$\Omega_{X_{i}}^{[1]}(\textup{log}\,D_{i}) \subseteq \sE_{i}$ with torsion free cokernel
such that $p_1^*\sE \cong (p_i\circ q_i)^*\sE_i$.
Set $\sE_1:=\sE$.

Suppose that $\phi_i$ is a divisorial contraction. By Theorem \ref{thm:descent}, there exists a locally free sheaf $\sE_{i+1}$ on $X_{i+1}$ such that $\sE_i\cong \phi_i^*\sE_{i+1}$. Moreover, $\sE_{i+1}$ is R-flat by Lemma \ref{lemma:R_flat_functoriality}.

Suppose now that $\phi_i$ is the flip of a small extremal contraction $c_i\colon X_i \to Z_i$, and let $c_{i+1}\colon X_{i+1} \to Z_i$ be the natural morphism.
By Theorem \ref{thm:descent} again, there exists a locally free sheaf $\sG_i$ on $Z_i$ such that $\sE_i \cong c_i^*\sG_i$. Set $\sE_{i+1}:=c_{i+1}^*\sG_i$. Then $\sG_i$ is R-flat by Lemma \ref{lemma:R_flat_functoriality} and hence so is $\sE_{i+1}$.

In either case, the inclusion $\Omega_{X_i}^{[1]}(\textup{log}\,D_i) \subseteq \sE_i$ induces an inclusion 
$\Omega_{X_{i+1}}^{[1]}(\textup{log}\,D_{i+1}) \subseteq \sE_{i+1}$ with torsion free cokernel 
by Lemma \ref{lemma:saturated}. Moreover, one readily checks that $p_1^*\sE \cong (p_{i+1}\circ q_{i+1})^*\sE_{i+1}$.

Then, we apply Lemma \ref{lemma:fibration} to $\psi$. Write $D_m=R+S$ where any irreducible component of $R$ (resp. $S$) maps onto $Y$ (resp. a proper subset of $Y$) and let $B$ be the $\mathbb{Q}$-divisor on $Y$ such that $S=\psi^* B$ (see \cite[Lemma 3.2.5 (2)]{kmm}). There exists a finite cover $\tau \colon Y_1 \to Y$ such that the following holds. Let $X_{m+1}$ be the normalization of the product $Y_1\times_Y X_m$ with natural morphisms $\psi_1\colon X_{m+1} \to Y_1$ and $\tau_1 \colon X_{m+1} \to X_m$. 
\begin{itemize}
\item The morphism $\tau_1$ is quasi-\'etale.
\item The divisor $B_1:=\ulcorner\tau^* B\urcorner$ is reduced and $K_{Y_1}+B_1\sim_\mathbb{Q}0$.
\item The pair $(Y_1,B_1)$ is log canonical and $Y_1$ is of klt type.
\item There exists a locally free, R-flat sheaf $\sG_1$ on $Y_1$ such that $\psi_1^*\sG_1\cong \tau_1^*\sE$, and an inclusion 
$\Omega_{Y_1}^{[1]}(\textup{log}\,B_1) \subseteq \sG_1$ with torsion free cokernel. 
\end{itemize}

Let $\beta \colon Y_2 \to Y_1$ be a $\mathbb{Q}$-factorialization of $Y_1$, whose existence is established in 
\cite[Corollary 1.37]{kollar_kovacs_singularities}. Recall that $\beta$ is a small birational projective morphism and that 
$Y_2$ is $\mathbb{Q}$-factorial with klt singularities.
Set $B_2:=(\beta^{-1})_*B_1$. Then $\beta^*\sG_1$ is obviously R-flat and there is an inclusion 
$\Omega_{Y_2}^{[1]}(\textup{log}\,B_2)\subseteq \beta^*\sG_1$ with torsion free cokernel (see Lemma \ref{lemma:saturated}). Moreover, $(Y_2,B_2)$ is log canonical by \cite[Lemma 3.10]{kollar97} and $K_{Y_2}+B_2\sim_\mathbb{Q}0$.

The induction hypothesis applied to $(Y_2,B_2)$ asserts that there exist normal projective varieties $Z_2$ and $T$ as well as a finite cover $\eta\colon Z_2 \to Y_2$ and a dominant rational map $a\colon Z_2 \map T$ such that the following holds.
\begin{itemize}
\item The morphism $\eta$ is quasi-\'etale over $Y_2 \setminus \textup{Supp}\, B_2 $. 
\item The variety $T$ is $\mathbb{Q}$-factorial and klt with $K_T\sim_\mathbb{Z}0$.
\item There exist open sets $T^\circ\subseteq T$ and $Z_2^\circ\subseteq Z_2$ with complement of codimension at least $2$ such that the map $a$ restricts to a projective morphism with rationally chain connected fibers $a^\circ\colon Z_2^\circ \to T^\circ$. Moreover, there is no $a$-exceptional divisor on $Z_2$.
\item There exist a locally free, R-flat sheaf $\sG$ on $T$ and an inclusion $\Omega_T^{[1]}\subseteq \sG$ with torsion free cokernel such that ${(\eta\circ\beta)^*\sG_1}_{|Z_2^\circ}\cong (a^\circ)^*\sG_{|T^\circ}$.
\end{itemize}
Let $Z_1$ be the normalization of the product $X_{m+1}\times_{Y_1} Z_2$ and let $Z$ be the 
normalization of $X_1$ in the function field of $Z_1$. We have a diagram as follows:

\begin{center}
\begin{tikzcd}[row sep=large, column sep=large]
Z\ar[r, "{\textup{birational}}", dashrightarrow]\ar[ddd, "{\gamma, \textup{ finite}}"]\ar[rrr, "{b}", bend left, dashrightarrow] & Z_1\ar[dd, "\textup{generically} \\ \textup{finite}"{align=left,font=\scriptsize}]\ar[r, "{\textup{fibration}}"] & Z_2\ar[d, "{\eta\textup{, finite}}"]\ar[r, "{a}"', dashrightarrow] & T\\
&&Y_2\ar[d, "{\beta,\textup{ small birational}}"] &\\
& X_{m+1}\ar[d, "{\tau_1,\textup{ quasi-\'etale}}"]\ar[r, "{\psi_1}"] & Y_1\ar[d, "{\tau,\textup{ finite}}"] & \\
X=X_1\ar[r, "{\textup{birational}}"', dashrightarrow] & X_m \ar[r, "{\psi,\textup{ MFS}}"'] &  Y. &
\end{tikzcd}
\end{center}

We now show that $\gamma$ is quasi-\'etale over $X \setminus \textup{Supp}\, D$. Let $P$ be a prime divisor on $X$ and suppose that $\gamma$ is ramified over $P$. If $P$ is contracted by the birational map $X \map X_m$ then $P$ is contained in the support of $D$ by Lemma \ref{lemma:divisorial}.
Suppose that $P$ is not contracted by $X \map X_m$ and let $P_m$ be the push-forward of $P$ on $X_m$. Then the generically finite map $Z_1 \to X_{m+1}$ is ramified over $\tau_1^*P_m$. On the other hand, since $\psi$ is a Mori fiber space, any irreducible component of $\tau_1^*P_m$ maps onto a closed subset of codimension at most $1$ in $Y_1$.
Thus, since $\eta$ is quasi-\'etale over $Y_2 \setminus \textup{Supp}\, B_2 $ and $\beta$ is a small birational contraction, any irreducible component of $\tau_1^*P_m$ must be contained in the support of $\psi_1^*B_1$. This shows that $P$ is contained in the support of $D$.

\medskip

Notice that $T$ has canonical singularities since $K_T\sim_\mathbb{Z}0$. This implies that
$T$ is not uniruled. 
By assumption, $X$ is $\mathbb{Q}$-factorial with klt singularities and $(X,D)$ is log canonical. Applying Lemma \ref{lemma:klt_singularities_finite_cover}, we see that $Z$ is of klt type. This in turn implies that 
the rational map $b\colon Z \map T$ is almost proper by \cite[Corollary 1.7]{hacon_mckernan}.
By \cite[Corolary 1.8]{hacon_mckernan} together with \cite[Theorem 1.2]{hacon_mckernan}, the general fibers of $\psi \colon X_m \to Y$ are rationally connected. This implies that the general fibers of the rational map $Z \map Z_2$ are rationally connected as well. Since the general fibers of $a$ are also rationally connected by \cite[Corolary 1.8]{hacon_mckernan}, applying \cite[Corollary 1.3]{ghs03}, we see that the general fibers of $b\colon Z \map T$ are rationally connected. Therefore, $b\colon Z \map T$ is the maximally rationally chain connected fibration of $Z$.

Set $G:=\gamma^*(K_X+D)-K_Z$. By Lemma \ref{lemma:klt_singularities_finite_cover}, $G$ is effective since $\gamma$ is quasi-\'etale over $X\setminus\textup{Supp}\,D$. Moreover, the pair $(Z,G)$ is log canonical, and $\gamma^{-1}\big(\textup{Supp}\, D\big) \subseteq \textup{Supp}\,G$.

Let $Q$ be a prime divisor on $Z$. Suppose that $Q$ is $b$-exceptional. If $Q$ is contracted by the rational map $Z \map X_m$ then $Q$ must be contained in the support of $G$ by Lemma \ref{lemma:divisorial}. On the other hand, by Proposition \ref{prop:zhang} below, any irreducible component of $G$ dominates $T$, yielding a contradiction. Therefore, $Q$ is not contracted by the rational map $Z \map X_m$. Then, since $\psi$ is a Mori fiber space, $Q$ is not exceptional for $Z \map Z_2$. It follows that the image of $Q$ on $Z_2$ is $a$-exceptional. But this contradicts the induction hypothesis. This shows that there is no $b$-exceptional divisor $Z$.

Suppose now that there is a uniruled prime divisor $P$ on $T$. Set $k:=\dim T$. The inclusion $\Omega_T^{[1]}\subseteq \sG$ then yields an inclusion $$s\colon\sO_T(K_T)\cong \sO_T\subseteq \wedge^k\sG$$
with torsion free cokernel at a general point of $P$.
Let $\nu \colon \mathbb{P}^1 \to X$ be a rational curve passing through a general point $p \in P$. By general choice of $\nu$, we may assume that  $s(p)\neq 0$. On the other hand, $\nu^*(\wedge^k\sG)$ is numerically flat by assumption. This immediately implies that $s$ is nowhere vanishing along $\nu(\mathbb{P}^1)$. By \cite[Lemma 1.20]{demailly_peternell_schneider94}, the reflexive sheaf $\Omega_T^{[1]}$ is then locally free in a neighbourhood of $\nu(\mathbb{P}^1)$. This in turn implies that $T$ is smooth along $\nu(\mathbb{P}^1)$ by the solution of the Zariski-Lipman conjecture for log canonical spaces (see \cite[Theorem 1.1]{druel_zl} or \cite[Corollary 1.3]{graf_kovacs_zl}).
Then $\nu^*T_T \cong \sO_{\mathbb{P}^1}^{\oplus k}$, yielding a contradiction since $T_{\mathbb{P}^1} \subset \nu^*T_T$.
By \cite[Corollary 1.7]{hacon_mckernan}, there exist open sets $T^\circ\subseteq T$ and $Z^\circ\subseteq Z$ such that $b$ restricts to a projective morphism with connected fibers $b^\circ\colon Z^\circ \to T^\circ$ and such that $T^\circ$ has complement of codimension at least $2$. Since there is no $b$-exceptional divisor on $Z$, $Z^\circ$ has complement of codimension at least $2$. 

Finally, one readily checks that $(b^\circ)^*\sG_{T^\circ}\cong {f^*\sE}_{|Z^\circ}$, finishing the proof of the proposition.
\end{proof}

\begin{prop}[{\cite[Main Theorem and Remark 1]{zhang_mrc_fibration}}]\label{prop:zhang}
Let $(X,D)$ be a log canonical pair with $X$ projective and $D$ effective, and let $f \colon X \map T$ be the maximally rationally chain connected fibration. Suppose that $-(K_X+D)$ is nef. Then $f$ is semistable in codimension $1$. Moreover, any irreducible component of $D$ dominates $T$. 
\end{prop}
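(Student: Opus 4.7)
The proposition is attributed to Zhang, and my plan is to follow the strategy of \cite{zhang_mrc_fibration}. The argument would proceed in five short stages.

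First, I would pass to a log resolution $\beta\colon (\tilde X,\tilde D)\to (X,D)$ together with a simultaneous resolution of the indeterminacies of $f$, so that $\tilde f\colon \tilde X\to \tilde T$ becomes a morphism between smooth projective varieties representing the MRC fibration; here $\tilde D$ denotes the largest reduced divisor whose support is contained in $\beta^{-1}(\textup{Supp}\,D)\cup\textup{Exc}(\beta)$. The pair $(\tilde X,\tilde D)$ is log smooth, and a general fiber of $\tilde f$ is smooth and rationally connected. By the Graber--Harris--Starr theorem, $\tilde T$ is not uniruled: otherwise a covering family of rational curves on $\tilde T$ would pull back to a tower of rationally connected varieties, enlarging the rationally chain connected equivalence and contradicting maximality of $f$. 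In particular, $K_{\tilde T}$ is pseudo-effective by the Boucksom--Demailly--Paun--Peternell characterization.

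Second, I would invoke the Ambro--Kawamata canonical bundle formula for $(\tilde f,\tilde D)$, obtaining
\[
K_{\tilde X}+\tilde D\sim_{\bQ}\tilde f^{*}(K_{\tilde T}+B_{\tilde T}+M_{\tilde T})+E,
\]
where $B_{\tilde T}$ is the discriminant divisor, $M_{\tilde T}$ the weakly positive moduli part, and $E$ a $\beta$-exceptional correction. Pushing down the nefness of $-(K_X+D)$ and combining with pseudo-effectivity of $K_{\tilde T}$ and weak positivity of $M_{\tilde T}$, one extracts a numerical upper bound on $B_{\tilde T}$.

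Third, for semistability of $\tilde f$ in codimension one, I would argue by contradiction. Suppose some prime divisor $P\subset \tilde T$ has the property that every component of $\tilde f^{*}P$ dominating $P$ occurs with multiplicity at least $2$: then, by the standard description of the discriminant, $B_{\tilde T}$ contains the term $\tfrac{1}{2}P$, and iterating this estimate along all such $P$ contradicts the bound from the previous step. Fourth, for the statement that every component $D_{0}$ of $D$ dominates $T$: if its strict transform $\tilde D_{0}$ does not dominate $\tilde T$, then a dimension count together with the semistability just established forces $\tilde D_{0}$ to coincide with the general fiber of $\tilde f$ over a codimension one subvariety $P\subset\tilde T$. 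This yields a contribution $1\cdot P$ to $B_{\tilde T}$, again violating the upper bound obtained in step two.

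The principal obstacle is the careful handling of the moduli part $M_{\tilde T}$ in the log canonical setting: its weak positivity is classical in the klt case, but in the purely log canonical case (relevant here, since $D$ is reduced) one must argue more delicately, typically by reducing to a klt situation via subadjunction to a minimal log canonical center or by perturbing $D$ slightly and passing to a limit. Checking that the canonical bundle formula applies under the hypotheses of the proposition, with the appropriate control of the exceptional correction $E$, is the technically most involved ingredient of the plan.
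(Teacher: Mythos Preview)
The paper does not supply its own proof of this proposition: it is stated with a direct citation to \cite[Main Theorem and Remark 1]{zhang_mrc_fibration} and is used as a black box in the proofs of Proposition~\ref{prop:main} and Theorem~\ref{thm:main}. So there is nothing in the paper to compare your argument against.

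That said, your sketch has a genuine gap at step two. The Ambro--Kawamata canonical bundle formula requires an lc-trivial fibration: one needs $(K_{\tilde X}+\tilde D)|_{\tilde F}\sim_{\mathbb{Q}}0$ on the general fibre $\tilde F$ of $\tilde f$. Under the hypotheses here you only know that $-(K_X+D)$ is nef, so $(K_{\tilde X}+\tilde D)|_{\tilde F}$ is anti-nef but has no reason to be numerically trivial; the formula as you invoke it is not available, and neither the discriminant $B_{\tilde T}$ nor the moduli part $M_{\tilde T}$ is defined in this generality. Zhang's actual argument bypasses this: rather than a canonical bundle formula, he uses weak positivity of direct images of the form $\tilde f_*\sO_{\tilde X}\big(m(K_{\tilde X/\tilde T}+\tilde D)\big)$ (after Viehweg and Campana), combines it with pseudo-effectivity of $K_{\tilde T}$ coming from non-uniruledness of the base, and derives a contradiction with nefness of $-(K_X+D)$ whenever a multiple fibre or a vertical component of $D$ appears over a codimension-one point of $\tilde T$. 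Your steps three and four can be made to work, but only after replacing the canonical bundle formula input by this weak positivity input; the ``upper bound on $B_{\tilde T}$'' you appeal to should instead be phrased as a positivity statement for a suitable twisted relative canonical sheaf.
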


\section{Proofs}

In this section we prove our main results. Note that Theorem \ref{thm:main_intro} is an immediate consequence of Theorem \ref{thm:main} below.

\begin{thm}\label{thm:main}
Let $X$ be a normal projective variety of klt type, and let $D$ be a reduced effective divisor on $X$ such that $(X,D)$ is log canonical.
Suppose that the sheaf $\Omega_X^{[1]}(\textup{log}\,D)$ is locally free and R-flat, and that $-(K_X+D)$ is nef.
Then there exist a smooth projective variety $T$ with $K_T\equiv 0$ as well as a surjective morphism with connected fibers 
$a\colon X \to T$. The fibration $(X,D)\to T$ is locally trivial for the analytic topology and any fiber $F$ of the map $a$ is a toric variety with boundary divisor $D_{|F}$. Moreover, $T$ contains no rational curve.
\end{thm}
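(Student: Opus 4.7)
The plan is to reduce Theorem \ref{thm:main} to Proposition \ref{prop:main} by upgrading the hypotheses, and then to strengthen its conclusion so as to obtain smoothness of $X$, toric fibers, local analytic triviality, and the absence of rational curves on $T$. First, I take a small $\bQ$-factorialization $\beta\colon X'\to X$ (it exists since $X$ is of klt type); since $\beta$ is small and $\Omega_X^{[1]}(\log D)$ is reflexive and locally free, one has $\Omega_{X'}^{[1]}(\log D')\cong \beta^*\Omega_X^{[1]}(\log D)$ with $D':=(\beta^{-1})_*D$. In particular $\Omega_{X'}^{[1]}(\log D')$ is locally free and R-flat, $(X',D')$ is log canonical, $X'$ is $\bQ$-factorial klt, and $-(K_{X'}+D')=\beta^*(-(K_X+D))$ is nef.

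The main technical step is to show $K_{X'}+D'\sim_\bQ 0$. Let $X'\dashrightarrow T_0$ be the MRC fibration, which is almost holomorphic by Hacon--McKernan. On a general (rationally connected) fiber $F$, the restriction $\Omega_{X'}^{[1]}(\log D')|_F$ is R-flat on a rationally chain connected variety, hence numerically flat by Lemma \ref{lemma:R_flat_numerically_flat}, and its determinant gives $(K_{X'}+D')|_F\equiv 0$. Combined with $-(K_{X'}+D')$ nef and $T_0$ non-uniruled, this forces $K_{X'}+D'\equiv 0$. To upgrade to $\sim_\bQ 0$, one runs a $K_{X'}$-MMP (available since $X'$ is uniruled when $D\ne 0$, and $K_{X'}$ is numerically torsion when $D=0$) terminating in a Mori fiber space, and then applies Lemma \ref{lemma:fibration} and Corollary \ref{cor:cbf} iteratively to track the log canonical class, concluding by induction on dimension. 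This step is the main obstacle, as it requires reconciling the numerical and $\bQ$-linear triviality of $K+D$.

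With $K_{X'}+D'\sim_\bQ 0$, Proposition \ref{prop:main} furnishes a finite cover $\gamma\colon Y\to X'$ quasi-\'etale over $X'\setminus \Supp D'$, a $\bQ$-factorial klt projective variety $T$ with $K_T\sim_\bZ 0$, a rational map $a\colon Y\dashrightarrow T$ restricting to a projective morphism with rationally chain connected fibers on a big open subset, no $a$-exceptional divisor on $Y$, and a locally free R-flat sheaf $\sG$ on $T$ with $\Omega_T^{[1]}\subseteq \sG$ of torsion-free cokernel. The argument from the proof of Proposition \ref{prop:main} then shows $T$ contains no rational curve: a map $\nu\colon\mathbb{P}^1\to T$ would make the inclusion $\sO_T\cong\sO_T(K_T)\subseteq\wedge^{\dim T}\sG$ produce a nonvanishing section of $\det\sG$ along $\nu$, so by the log canonical Zariski--Lipman theorem $T$ would be smooth along $\nu(\mathbb{P}^1)$, contradicting $T_{\mathbb{P}^1}\subset\nu^*T_T\cong\sO_{\mathbb{P}^1}^{\oplus \dim T}$. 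The same Zariski--Lipman input shows $T$ is smooth everywhere and $\sG=\Omega_T^1$, so $K_T\equiv 0$.

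To conclude, I analyze fibers and descend. Setting $D_Y:=\gamma^*D$, the exact sequence $0\to a^*\Omega_T^1\to \Omega_Y^1(\log D_Y)\to \Omega^1_{Y/T}(\log D_Y)\to 0$ on the smooth locus of $a$, together with R-flatness of $\Omega_Y^{[1]}(\log D_Y)$ on a rationally connected general fiber $F$, forces $\Omega_F^{[1]}(\log D_Y|_F)$ to be trivial. By Winkelmann's classification of pairs with trivial logarithmic tangent bundle and the triviality of the Albanese of $F$, the fiber $F$ is a smooth toric variety with boundary $D_Y|_F$. Local triviality of $(Y,D_Y)\to T$ for the analytic topology then follows from the complex flows of the trivializing vector fields in $T_{Y/T}(-\log D_Y)$, as in the proof of Lemma \ref{lemma:fibration}. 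Descending through the quasi-\'etale cover $\gamma$ and the small birational morphism $\beta$ (which must be an isomorphism, as no small contraction of a smooth variety exists) finally yields the smooth morphism $a\colon X\to T$ with all the stated properties.
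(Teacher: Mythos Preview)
Your proposal has several genuine gaps.

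First, the claim that ``the same Zariski--Lipman input shows $T$ is smooth everywhere and $\sG=\Omega_T^1$'' is wrong on two counts. The sheaf $\sG$ produced by Proposition~\ref{prop:main} satisfies $(a^\circ)^*\sG_{|T^\circ}\cong \gamma^*\Omega_X^{[1]}(\log D)_{|Y^\circ}$, so $\sG$ has rank $\dim X$, not $\dim T$; it cannot equal $\Omega_T^1$ when $D\neq 0$. More seriously, the Zariski--Lipman argument you invoke only shows that $\Omega_T^{[1]}$ is locally free \emph{along a given rational curve}: it uses that a nonzero section of a numerically flat bundle on $\mathbb{P}^1$ is nowhere vanishing. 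Having just concluded that $T$ contains no rational curve, you have no handle on the singular locus of $T$. The paper's proof establishes smoothness of the base by an entirely different route: it restricts to an irreducible component $C_2$ of the boundary divisor, observes that $T_{C_2}(-\log\,(D_2-C_2)_{|C_2})$ is again R-flat, and applies the \emph{induction hypothesis on dimension} to $C_2$. This yields a smooth base $T_2$ with no rational curves for the restricted fibration, and a comparison with $T_1$ (via an \'etale cover) forces $T_1$ itself to be smooth. This inductive use of a boundary component is the key idea you are missing.

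Second, your descent from $Y$ back to $X$ is incomplete. You have a finite cover $\gamma\colon Y\to X'$ and a (rational) map $a\colon Y\dashrightarrow T$, but you never produce a morphism from $X$ to anything. Even granting that $Y$ is smooth with a locally trivial toric fibration over $T$, the map $\gamma$ is only quasi-\'etale away from $D'$, so smoothness of $Y$ does not imply smoothness of $X'$, and your parenthetical ``$\beta$ must be an isomorphism'' is unsupported. The paper handles this in its Step~5: it shows that the foliation on $X$ induced by the MRC fibration is regular (by pulling back a defining twisted form to $X_1$ and $X_2$ and checking the conormal line bundle is trivial), and then invokes Lemma~\ref{lemma:smoothness} to conclude that the MRC fibration of $X$ itself is a smooth morphism $X\to T$. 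This is how the conclusion is brought back down to $X$.

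Finally, your argument for $K_{X'}+D'\equiv 0$ is more circuitous than necessary and the upgrade to $\sim_{\mathbb{Q}}0$ is only sketched. The paper gets $K_X+D\equiv 0$ in one line from the cone theorem for log canonical pairs (since $(K_X+D)\cdot C=0$ on every rational curve by R-flatness of the determinant), and then $K_X+D\sim_{\mathbb{Q}}0$ from \cite{CKP_numerical} after passing to a log resolution.
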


\begin{proof} 
For the reader's convenience, the proof is subdivided into a number of steps. 

\medskip

\noindent\textit{Step 1.}
By the cone theorem for log canonical spaces (see \cite[Theorem 1.4]{fujino_non_vanishing}), we must have $K_X+D\equiv 0$ since $-(K_X+D)$ is nef and $(K_X+D)\cdot C=0$ for any rational curve $C \subset X$ by assumption.

Let $\beta\colon X_1 \to X$ be a canonical resolution of $(X,D)$ and let $D_1$ be the largest reduced divisor contained in $\beta^{-1}\big(\textup{Supp}\,D\big)$. By Lemma \ref{lemma:canonical_resolution}, we have $\Omega_{X_1}^1(\textup{log}\,D_1) \cong \beta^*\Omega_X^1(\textup{log}\,D)$. In particular, $\Omega_{X_1}^1(\textup{log}\,D_1)$ is locally free and R-flat.
Moreover, $(X_1,D_1)$ is log canonical and $K_{X_1}+D_1\equiv 0$. Applying \cite[Theorem 0.1]{CKP_numerical}, we see that $K_{X_1}+D_1$ is torsion. 

Suppose now that the conclusion of Theorem \ref{thm:main} holds for the pair $(X_1,D_1)$. We show that the conclusion of Theorem \ref{thm:main} also holds for the pair $(X,D)$. 
By assumption, there exist a smooth projective variety $T_1$ with $K_{T_1}\equiv 0$ as well as a surjective morphism with connected fibers 
$a_1\colon X_1 \to T_1$. The fibration $(X_1,D_1)\to T_1$ is locally trivial for the analytic topology and any fiber $F_1$ of the map $a_1$ is a smooth toric variety with boundary divisor ${D_1}_{|F_1}$. Moreover, $T_1$ contains no rational curve. Observe also that any irreducible component of $D_1$ maps onto $T_1$.

By \cite[Theorem 1.2]{hacon_mckernan}, every fiber of $\beta$ is rationally chain connected. On the other hand, $T_1$ contains no rational curve by assumption. It follows that the rational map $a_1 \circ\beta^{-1}$ is a morphism $a \colon X \to T_1=:T$. 
By Theorem \ref{thm:main_descent_intro} applied to $a_1$ together with the projection formula, there exists a vector bundle $\sE$ on $T$ such that $\Omega_{X}^{[1]}(\textup{log}\,D) \cong a^*\sE$. Since $(X_1,D_1)$ is locally trivial over $T_1$ (for the analytic topology), the
morphism $\sE^* \to T_T=T_{T_1}$ induced by the composed map
$$a_1^*\sE^* \cong T_{X_1}(-\textup{log}\,D_1) \to T_{X_1} \to a_1^* T_{T_1}$$
is surjective. This easily implies that the composed morphism
$$a^*\sE^* \cong T_X(-\textup{log}\,D) \to T_X \to a^* T_T$$
is surjective as well since $\beta^*T_X(-\textup{log}\,D) \cong T_{X_1}(-\textup{log}\,D_1)$.
Moreover, $T_T$ is locally a direct summand of $\sE^*$.
Now, a classical result of complex analysis says that complex flows of vector fields on analytic spaces exist. This implies that the fibration $(X_1,D_1)\to T$ is locally trivial for the analytic topology.

Let $F$ be any fiber of $a$ and let $F^\circ \subseteq F$ be the open set where $(F,D_{|F})$ is log smooth. Note that $(F,D_{|F})$ is log canonical. In particular, $F^\circ$ has complement of codimension at least $2$ in $F$. The sequence
$$ 0 \to T_{F^\circ}(-\textup{log}\, {D}_{|F^\circ}) \to T_{X}(-\textup{log}\, D)_{|F^\circ}\cong \sO_{F^\circ}^{\oplus \dim X} \to  {\sN_{F/X}}_{|F^\circ}\cong \sO_{F^\circ}^{\oplus \dim T} \to 0$$
is exact (see \cite[Lemma 3.2]{druel_zl}), and hence $T_{F}(-\textup{log}\, {D}_{|F})\cong \sO_{F}^{\oplus \dim F}$ since both sheaves are reflexive and agree on $F^\circ$. Let $\mu\colon F_1 \to F$ be a canonical resolution, and let $D_{F_1}$ be the largest reduced divisor contained in $\mu^{-1}\big(\textup{Supp}\,D_{|F}\big)$. By Lemma \ref{lemma:canonical_resolution}, 
$$\Omega_{F_1}^{1}(\textup{log}\,D_{F_1}) \cong \mu^* \Omega_F^{[1]}(\textup{log}\,D_{|F})\cong \sO_{F_1}^{\oplus \dim F_1}.$$
By \cite[Corollary 1]{winkelmann_log_trivial}, $F_1$ is a toric variety with boundary divisor $D_{F_1}$. 
This in turn implies that $F$ is a toric variety with boundary divisor $D_{|F}$.
This shows that the conclusion of Theorem \ref{thm:main} holds for the pair $(X,D)$. 
Therefore, we may assume without loss of generality that the following holds.

\begin{assumption}
The pair $(X,D)$ is log smooth and $K_X+D\sim_\mathbb{Q} 0$. 
\end{assumption}

We prove Theorem \ref{thm:main} by induction on $\dim X$.
 
\medskip 
 
If $\dim X =1$, then either $(X,D)\cong \big(\mathbb{P}^1,[0]+[\infty]\big)$, or $X$ is a Riemann surface of genus $1$ and 
$D=0$. The statement holds true in this case.

\medskip

Suppose from now on that $\dim X \ge 2$ and apply Proposition \ref{prop:main}. There exist normal projective varieties $X_1$ and $T_1$ as well as a finite cover $\gamma\colon X_1 \to X$ 
and a dominant rational map $a_1\colon X_1 \map T_1$ such that the following properties hold. 
\begin{itemize}
\item The morphism $\gamma$ is quasi-\'etale over $X \setminus \textup{Supp}\, D $. 
\item The variety $T_1$ is $\mathbb{Q}$-factorial and klt with $K_{T_1}\sim_\mathbb{Z}0$.
\item There exist open sets $T_1^\circ\subseteq T_1$ and $X_1^\circ\subseteq X_1$ with complement of codimension at least $2$ such that the map $a_1$ restricts to a projective morphism with rationally chain connected fibers $a_1^\circ\colon X_1^\circ \to T_1^\circ$. Moreover, there is no $a_1$-exceptional divisor on $X_1$.
\item There exist a locally free, R-flat sheaf $\sG_1$ on $T_1$ and an inclusion $\Omega_{T_1}^{[1]}\subseteq \sG_1$ with torsion free cokernel such that ${\gamma^*\Omega_X^{[1]}(\textup{log}\,D)}_{|X_1^\circ}\cong (a_1^\circ)^*{\sG_1}_{|T_1^\circ}$.
\end{itemize}

Set $D_1:=\gamma^*(K_X+D)-K_{X_1}$ and $D_1^\circ:={D_1}_{|X_1^\circ}$. By Lemma \ref{lemma:pull_back_cover}, we have
$$\gamma^*\Omega_X^{[1]}(\textup{log}\,D) \cong \Omega_{X_1}^{[1]}(\textup{log}\,D_1).$$

\noindent\textit{Step 2.} We show that the fibration $(X_1^\circ,{D_1}_{|X_1^\circ})\to T_1^\circ$ is locally trivial for the analytic topology, and that any fiber $F_1$ of $a_1^\circ$ is a toric variety with boundary divisor ${D_1}_{|F_1}$.
We may assume without loss of generality that $T_1^\circ$ is contained in the smooth locus of $T_1$.
Recall from Proposition \ref{prop:zhang} that any irreducible component of $D_1$ maps onto $T_1$.
By Proposition \ref{prop:zhang} again, we may also assume that $a_1$ has reduced fibers over $T_1^\circ$. It follows that the composed map 
$$(a_1^\circ)^*({\sG_1^*}_{|T_1^\circ})\cong T_{X_1^\circ}(-\textup{log}\,D_1^\circ) \to T_{X_1^\circ} \to (a_1^\circ)^*T_{T_1^\circ}$$
is surjective. Therefore, the induced map ${\sG_1^*}_{|T_1^\circ}\to T_{T_1^\circ}$ is surjective as well, and hence
$T_{T_1^\circ}$ is locally a direct summand of ${\sG_1^*}_{|T_1^\circ}$. As before, this implies that
the fibration $(X_1^\circ,D_1^\circ)\to T_1^\circ$ is locally trivial for the analytic topology.

Let $F_1$ be a general fiber of $a_1$, and set $D_{F_1}:={D_1}_{|F_1}$. 
One readily checks that 
$T_{F_1}(-\textup{log}\,D_{F_1})\cong \sO_{F_1}^{\oplus \dim F_1}$. Let $\mu_1\colon F_2 \to F_1$ be a canonical resolution, and let $D_{F_2}$ be the largest reduced divisor contained in $\mu_1^{-1}\big(\textup{Supp}\,D_{F_1}\big)$. By Lemma \ref{lemma:canonical_resolution}, 
$$\Omega_{F_2}^{1}(\textup{log}\,D_{F_2}) \cong \mu_1^* \Omega_{F_1}^{[1]}(\textup{log}\,D_{F_1})\cong \sO_{F_2}^{\oplus \dim F_2}.$$
It follows that $F_2$ is a toric variety with boundary divisor $D_{F_2}$ (see \cite[Corollary 1]{winkelmann_log_trivial}).
This in turn implies that $F_1$ is a toric variety with boundary divisor $D_{F_1}$, completing the proof of the claim.

\medskip

\noindent\textit{Step 3.} Let $\beta_1\colon X_2 \to X_1$ be a canonical resolution of $(X_1,D_1)$ and let $D_2$ be the largest reduced divisor contained in $\beta_1^{-1}\big(\textup{Supp}\,D_1\big)$. By Lemma \ref{lemma:canonical_resolution}, we have $\Omega_{X_2}^1(\textup{log}\,D_2) \cong \beta^*\Omega_{X_1}^1(\textup{log}\,D_1)$. In particular, $\Omega_{X_2}^1(\textup{log}\,D_2)$ is locally free and R-flat.
Moreover, $(X_2,D_2)$ is log canonical and $K_{X_2}+D_2\sim_\mathbb{Q} 0$. Set $a_2:=a_1 \circ\beta_1^{-1}\colon X_2 \map T_1$.
We have a commutative diagram as follows:
\begin{center}
\begin{tikzcd}[row sep=large, column sep=large]
X_2\ar[d, "{\beta_1}"']\ar[r, "{a_2}", dashrightarrow] & T_1\ar[d, equal]\\
X_1\ar[d, "{\gamma}"']\ar[r, "{a_1}", dashrightarrow] & T_1\\
X. &
\end{tikzcd}
\end{center}
By Proposition \ref{prop:zhang}, any irreducible component of $D_2$ maps onto $T_1$. In particular, there is no $a_2$-exceptional divisor on $X_2$.
Set $X_2^\circ:=(a_2^\circ)^{-1}(T_1^\circ)=\beta_1^{-1}(X_1^\circ)$ and $D_2^\circ:={D_2}_{|X_2^\circ}$.
Arguing as in Step 2 above, we see that, shrinking $T_1^\circ$ if necessary, we may assume that the fibration
$(X_2^\circ,D_2^\circ)$ is locally trivial over $T_1^\circ$. Moreover, a general fiber $F_2$ of $a_2^\circ:={a_2}_{|X_2^\circ}$ is a smooth toric variety with boundary divisor ${D_2}_{|F_2}$. We may also assume without loss of generality that $X_2^\circ$ has complement of codimension at least $2$ since there is no $a_2$-exceptional divisor on $X_2$. 

Let $C_2$ be an irreducible component of $D_2$. 
The short exact sequence (see \cite[Lemma 2.13.2]{kebekus_kovacs_invent})
$$0 \to \sO_{C_2} \to {T_{X_2}(-\textup{log}\, D_2)}_{|C_2} \to T_{C_2}(-\textup{log}\, (D_2-C_2)_{|C_2}) \to 0$$
implies that $T_{C_2}(-\textup{log}\, (D_2-C_2)_{|C_2})$ is R-flat. Moreover, $K_{C_2}+(D_2-C_2)_{|C_2}\sim_\mathbb{Q}0$.
The induction hypothesis applied to $C_2$ then says that there exists a smooth projective variety $T_2$ with $K_{T_2}\equiv 0$ as well as a smooth morphism with rational fibers $b_2\colon C_2 \to T_2$. Moreover, the fibration $\big(C_2,(D_2-C_2)_{|C_2}\big)\to T_2$ is locally trivial for the analytic topology and $T_2$ contains no rational curve.

Notice that fibers of $C_2\cap X_2^\circ \to T_1^\circ$ are projective with rational connected components.
On the other hand, the number of connected components of $C_2 \cap a_2^{-1}(t)$
does not depend on the point $t \in T_1^\circ$ since $(X_2^\circ,D_2^\circ) \to T_1^\circ$ is a locally trivial fibration. Let $T_3^\circ \to T_1^\circ$ be the corresponding \'etale cover, and let $T_3$ be the normalization of $T_1$ in the function field of $T_1^\circ$. Note that $T_3 \to T_1$ is quasi-\'etale. In particular, $T_3$ is klt and $K_{T_3}\equiv 0$.
Moreover, there is a birational map $\iota\colon T_3\map T_2$.
Since $T_2$ contains no rational curve and $T_3$ is klt, $\iota$ is a morphism by \cite[Corollary 1.7]{hacon_mckernan}. On the other hand,
since $T_2$ is smooth and both $T_2$ and $T_3$ have numerically trivial canonical class, we conclude that $\iota$ is an isomorphism. In particular, $T_3$ is smooth and contains no rational curve.

Replacing $T_1$ by $T_3$ and $X_1$ by a quasi-\'etale cover, if necessary, we may assume without loss of generality that the following holds.

\begin{assumption}
The variety $T_1$ is smooth and contains no rational curve.
\end{assumption}

\medskip

\noindent\textit{Step 4.} Then \cite[Corollary 1.7]{hacon_mckernan} implies that both $a_1$ and $a_2$ are morphisms.
By the Nagata-Zariski purity theorem, there is an \'etale morphism $\eta\colon T_2 \to T_1$ such that 
$\eta\circ b_2 = {a_2}_{|C_2}$. 
Now, we have a commutative diagram as follows:
\begin{center}
\begin{tikzcd}
{a_2^*\sG_1^*\cong T_{X_2}(-\textup{log}\,D_2)}_{|C_2}\ar[d, twoheadrightarrow]\ar[r] & {T_{X_2}}_{|C_2}\ar[r] & {a_2^*T_{T_1}}_{|C_2}\ar[d, equal] \\
T_{C_2}(-\textup{log}\, (D_2-C_2)_{|C_2}) \ar[rr, twoheadrightarrow] & & {a_2^*T_{T_1}}_{|C_2}\cong ({a_2}_{|C_2})^*T_{T_1}
\end{tikzcd}
\end{center}
It follows that the map $\sG_1^* \to T_{T_1}$ induced by the composed map 
$$a_2^*\sG_1^*\cong T_{X_2}(-\textup{log}\,D_2) \to T_{X_2} \to a_2^*T_{T_1}$$
is surjective. Arguing as above, this implies that the fibration $(X_2,D_2)\to T_1$ is locally trivial for the analytic topology.
By Step 1, we see that the fibration $(X_1,D_1)\to T_1$ is also locally trivial.

\medskip

\noindent\textit{Step 5.}
Let $f \colon X \map R$ be the maximally rationally chain connected fibration. Recall that $f$ is an almost proper map and that its general fibers are rationally connected. Let $\omega \in H^0(X,\Omega_X^q\otimes\sL)$ be a twisted $q$-form defining the foliation $\sH$ on $X$ induced by $f$. By Proposition \ref{prop:zhang}, any irreducible component of $D$ maps onto $R$. This implies that the zero set of the reflexive pull-back $\omega_1 \in H^0\big(X_1,\Omega_{X_1}^{[q]}\otimes\gamma^*\sL\big)$ of $\omega$ has codimension at least $2$ (see for instance \cite[Lemma 3.4]{cd1fzerocan}). Moreover, $\omega_1$ obviously defines the foliation $\sH_1$ on $X_1$ induced by the map $a_1$.
It follows that $\gamma^*\sL\cong \det \sN_{\sH_1}\cong\sO_{X_1}$. 
Let $\omega_2 \in H^0(X_2,\Omega_{X_2}^{q})$ be the pull-back of $\omega$ to $X_2$. Then $\omega_2$ defines the foliation $\sH_2$ on $X_2$ induced by the map $a_2$. Since $\det \sN_{\sH_2}\cong\sO_{X_2}$ and $\sH_2$ is regular, we conclude that $\omega_2$ is nowhere vanishing. This in turn implies that $\sH$ is regular.

By Lemma \ref{lemma:smoothness} below, $f$ extends to a smooth morphism with rationally connected fibers $a \colon X \to T$ onto a smooth projective variety $T$. By Theorem \ref{thm:main_descent_intro}, there is a locally free, R-flat sheaf $\sG$ on $T$ such that 
$a^*\sG\cong \Omega_X^1(\textup{log}\,D)$.
Arguing again as above, we conclude that the fibration $(X,D)\to T$ is locally trivial for the analytic topology and that any fiber $F$ of $a$ is a smooth toric variety with boundary divisor $D_{|F}$. 

Notice that $K_T$ is torsion since $\gamma^*\sL\cong \sO_{X_1}$. In particular, $T$ is not uniruled. This easily implies that the image on $X$ of any fiber of $a_1$ is contracted by $a$. On the other hand, if $F$ is a general fiber of $a$, then any connected component of 
$(\gamma\circ\beta)^{-1}(F)$ is rationally connected. This immediately implies that $\dim T = \dim T_1$. Moreover, by the rigidity lemma, there is a finite morphism $\tau \colon T_1 \to T$ such that $a \circ\gamma = \tau\circ a_1$. Since $T$ is smooth and both $T$ and $T_1$ have numerically trivial canonical class, we conclude that $\tau$ is \'etale. It follows that $T$ contains no rational curve. 
This completes the proof of the theorem.
\end{proof}

The following result is an immediate consequence of \cite[Corollary 2.11]{split_tangent_rc}.

\begin{lemma}\label{lemma:smoothness}
Let $X$ be a complex projective manifold, and let $f\colon X \map Y$ be an almost proper dominant rational map onto a normal projective variety $Y$. Suppose that the general fibers of $f$ are rationally connected. Suppose furthermore that the foliation 
$\sG$ on $X$ induced by $f$ is regular. Then $f$ is a smooth morphism. In particular, $Y$ is smooth.
\end{lemma}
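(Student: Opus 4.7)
The plan is to invoke \cite[Corollary 2.11]{split_tangent_rc} directly. That statement guarantees that a regular foliation on a smooth projective variety whose general leaves are rationally connected is algebraically integrable and is induced by a smooth projective morphism, with connected rationally connected fibers, onto a smooth projective base.

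First I would verify the hypotheses: the foliation $\sG$ is regular by assumption, and its general leaves coincide with the general (hence rationally connected) fibers of $f$. Applying the corollary produces a smooth surjective morphism $g \colon X \to T$ onto a smooth projective variety $T$, with connected rationally connected fibers, such that $T_{X/T} = \sG$. In particular $g$ is proper with connected fibers.

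The next step is to identify $g$ with the original rational map $f$. Since $f$ and $g$ share the same general fibers (the general leaves of $\sG$), there is an induced birational map $\sigma \colon T \dashrightarrow Y$ such that $\sigma \circ g = f$ on the locus where $f$ is defined. The rigidity lemma, applied to the proper morphism $g$ with connected fibers, shows that $\sigma$ extends to a morphism $T \to Y$. Because $f$ is almost proper and $g$ is a proper submersion with the same general fibers as $f$, no divisor of $T$ can be $\sigma$-exceptional and no curve in a fiber of $g$ can fail to contract under $f$; thus $\sigma$ is a birational finite morphism from the smooth variety $T$ onto the normal variety $Y$, hence an isomorphism by Zariski's main theorem.

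The main (and really only) subtlety is this last identification of $T$ with $Y$; everything else is an essentially mechanical invocation of \cite[Corollary 2.11]{split_tangent_rc}. Once $\sigma$ is recognized as an isomorphism, $f = \sigma \circ g$ is a smooth morphism and $Y \cong T$ is smooth, proving both assertions of the lemma.
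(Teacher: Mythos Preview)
Your approach—invoking \cite[Corollary 2.11]{split_tangent_rc}—is exactly the paper's; the paper gives no argument beyond declaring the lemma an ``immediate consequence'' of that result. You correctly flag the identification of H\"oring's base $T$ with the given target $Y$ as the only subtle point, but your treatment of it has gaps.

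A minor one first: the rigidity lemma requires a \emph{morphism} $X\to Y$, not a rational map. The standard fix is to resolve the indeterminacies of $f$ on a blow-up $\pi\colon\tilde X\to X$ and apply rigidity to the morphisms $g\circ\pi$ and $\tilde f:=f\circ\pi$; since $\tilde f$ is constant on general fibres of $g\circ\pi$, one gets a morphism $\sigma\colon T\to Y$ with $\tilde f=\sigma\circ g\circ\pi$, hence $f=\sigma\circ g$ is already a morphism.

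The real issue is your claim that almost-properness of $f$ forces $\sigma$ to be finite. This is not substantiated, and in fact appears to fail under the stated hypotheses. Take $T=\mathbb{F}_2$, let $\sigma\colon T\to Y$ contract the negative section to the vertex of the projective cone $Y$ over a smooth conic, set $X=T\times\mathbb{P}^1$ with $g$ the first projection, and put $f=\sigma\circ g$. Then $f$ is a proper (hence almost proper) surjection onto the normal projective variety $Y$, its general fibre is $\mathbb{P}^1$, and the induced foliation $\sG=T_{X/T}$ is a subbundle of $T_X$; yet $f$ is not smooth and $Y$ is singular. So the bare hypotheses do not force $T\cong Y$. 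What H\"oring's result yields unconditionally is the smooth morphism $g\colon X\to T$ with $\sG=T_{X/T}$, and that is all the paper actually uses in Step~5 of the proof of Theorem~\ref{thm:main}: the resulting smooth morphism is written $a\colon X\to T$, with the target simply relabelled rather than identified with the original MRC base $R$.
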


\begin{proof}[{Proof of Corollary \ref{cor:main_intro_1}}] By Theorem \ref{thm:main_intro}, there exist a smooth projective variety $T$ with $K_T\equiv 0$ as well as a smooth morphism with connected fibers $a\colon X \to T$. The fibration $(X,D)\to T$ is locally trivial for the analytic topology and any fiber $F$ of the map $a$ is a smooth toric variety with boundary divisor $D_{|F}$. Since $\pi_1(X)=\{1\}$ by assumption, we also have $\pi_1(T)=\{1\}$. It follows that
$K_T\sim_\mathbb{Z}0$. This in turn implies that $h^{p,0}(X)\ge 1$ where $p:=\dim T$. Therefore, we must have $p=0$, proving the corollary.
\end{proof}

\begin{proof}[{Proof of Corollary \ref{cor:main_intro_2}}] Notice that 
$T_X(-\textup{log}\, D)$ is $R$-flat. Moreover, $K_X+D \equiv 0$, so that Theorem \ref{thm:main_intro} applies.
There exist a smooth projective variety $T$ with $K_T\equiv 0$ as well as a smooth morphism with connected fibers $a\colon X \to T$. The fibration $(X,D)\to T$ is locally trivial for the analytic topology and any fiber $F$ of the map $a$ is a smooth toric variety with boundary divisor $D_{|F}$. By Theorem \ref{thm:main_descent_intro}, there exists a vector bundle $\sG$ on $T$ such that $a^*\sG \cong T_X(-\textup{log}\, D)$. Set $p:=\dim T$. By Lemma \ref{lemma:pull_back_fibration}, there is a nonzero map
$\wedge^p\sG \to \sO_T(-K_T)$. Notice that both $\wedge^p\sG$ and $\sO_T(-K_T)$ are numerically flat. Thus,
applying \cite[Proposition 1.16]{demailly_peternell_schneider94}, we see that the morphism of locally free sheaves $\sG \to T_T$ is surjective. This in turn implies that $T_T$ is numerically flat.
By \cite[Corollary 1.19]{demailly_peternell_schneider94}, we have $c_1(T)=0$ and $c_2(T)=0$. As a classical consequence of Yau's theorem on the existence of a 
K\"ahler-Einstein metric, $T$ is then covered by a complex torus (see \cite[Chapter IV Corollary 4.15]{kobayashi_diff_geom_vb}). This finishes the proof of the corollary.
\end{proof}

\begin{proof}[{Proof of Proposition \ref{prop:main_intro_3}}] Note that all Chern classes of $T_X(-\textup{log}\, D)$ vanish. By \cite[Theorem 1.3]{campana_paun19} (see also \cite[Theorem 4]{schnell_CP}), $T_X(-\textup{log}\, D)$ is slope-semistable with respect to any polarization. Then \cite[Corollary 3.10]{simpson_higgs_flat} implies that $T_X(-\textup{log}\, D)$ is numerically flat so that Corollary \ref{cor:main_intro_2} applies, proving the corollary. 
\end{proof}


\providecommand{\bysame}{\leavevmode\hbox to3em{\hrulefill}\thinspace}
\providecommand{\MR}{\relax\ifhmode\unskip\space\fi MR }
\providecommand{\MRhref}[2]{%
  \href{http://www.ams.org/mathscinet-getitem?mr=#1}{#2}
}
\providecommand{\href}[2]{#2}

\end{document}